\newcommand{\private}[1]{}
\renewcommand\l@subsection{\@tocline{2}{0pt}{2pc}{5pc}{}}
\newcommand{\R}{{\mathbb R}}
\newcommand{\abs}[1]{{\left\vert #1 \right\vert}}
\newcommand{\hofiber}{\operatorname{hofiber}}
\newcommand{\holim}{\operatorname{holim}}
\newcommand{\hocolim}{\operatorname{hocolim}}
\newcommand{\tfiber}{\operatorname{tfiber}}
\newcommand{\Map}{\operatorname{Map}}
\newcommand{\Emb}{\operatorname{Emb}}
\newcommand{\Imm}{\operatorname{Imm}}
\newcommand{\Link}{\operatorname{Link}}
\newcommand{\del}{{\partial}}
\newcommand{\Spaces}{\operatorname{Spaces}}
\theoremstyle{plain}
\newtheorem{thm}{Theorem}[section]
\newtheorem{prop}[thm]{Proposition}
\newtheorem{lemma}[thm]{Lemma}
\newtheorem{cor}[thm]{Corollary}
\theoremstyle{definition}
\newtheorem{defin}[thm]{Definition}
\newtheorem{example}[thm]{Example}
\newtheorem{def/ex}[thm]{Definition/Example}
\theoremstyle{remark}
\newtheorem{rem}[thm]{Remark}
\newcommand{\refS}[1]{Section~\ref{S:#1}}
\newcommand{\refT}[1]{Theorem~\ref{T:#1}}
\newcommand{\refC}[1]{Corollary~\ref{C:#1}}
\newcommand{\refP}[1]{Proposition~\ref{P:#1}}
\newcommand{\refD}[1]{Definition~\ref{D:#1}}
\newcommand{\refL}[1]{Lemma~\ref{L:#1}}
\newcommand{\refE}[1]{equation~$(\ref{E:#1})$}
\begin{document}


\title[Multivariable manifold calculus of functors]{Multivariable manifold calculus of functors}


\author{Brian A. Munson}
\address{Department of Mathematics, Wellesley College, Wellesley, MA}
\email{bmunson@wellesley.edu}
\urladdr{http://palmer.wellesley.edu/\~{}munson}

\author{Ismar Voli\'c}
\address{Department of Mathematics, Wellesley College, Wellesley, MA}
\email{ivolic@wellesley.edu}
\urladdr{http://palmer.wellesley.edu/\~{}ivolic}

\subjclass{Primary: 57Q45; Secondary: 57R40, 57T35}
\keywords{calculus of functors, link maps, embeddings,  homotopy limits, cubical diagrams}

\thanks{The second author was supported in part by the National Science Foundation grant DMS 0805406.}


\begin{abstract}
Manifold calculus of functors, due to M. Weiss, studies contravariant functors from the poset of open subsets of a smooth manifold to topological spaces. We introduce ``multivariable" manifold calculus of functors which is a generalization of this theory to functors whose domain is a product of categories of open sets. We construct multivariable Taylor approximations to such functors, classify multivariable homogeneous functors, apply this classification to compute the derivatives of a functor, and show what this gives for the space of link maps. We also relate Taylor approximations in single variable calculus to our multivariable ones.
\end{abstract}

\maketitle

\tableofcontents

\parskip=4pt
\parindent=0cm


\section{Introduction}\label{S:Intro}


The main purpose of this paper is to generalize the theory of \emph{manifold calculus of functors} developed by Weiss \cite{W:EI1} and Goodwillie-Weiss \cite{GW:EI2} (see also \cite{W:EmbCalc, GKW}) which seeks to approximate, in a suitable sense, a contravariant functor $F\colon \mathcal{O}(M)\rightarrow\Spaces$ where $M$ is a smooth compact manifold and $\mathcal{O}(M)$ the poset of open subsets of $M$. The main feature of the theory is that one can associate to $F$ another functor, called the \emph{$k^{th}$ Taylor approximation of $F$} or the \emph{$k^{th}$ stage of the Taylor tower of $F$}, given by
$$
T_kF(U)=\underset{V\in \mathcal{O}_k(U)}{\holim}\, F(V).
$$
Here $\mathcal{O}_k(U)$ is the subcategory of $\mathcal{O}(U)$ consisting of open sets diffeomorphic to at most $k$ disjoint open balls of $U$.  One then has natural transformations $F\to T_kF$ and $T_kF\to T_{k-1}F$, $k\geq0$, which combine into a \emph{Taylor tower of $F$}.  The homotopy fiber of the map $T_kF\to T_{k-1}F$, denoted by $L_kF$, is called the \emph{$k^{th}$ homogeneous layer of $F$} and is of special importance since such functors admit a classification. 

The work in this paper owes an enormous debt to Weiss' original work \cite{W:EI1} where he develops what we will in this paper call single variable manifold calculus. We will often refer the reader to that paper for details or even entire results. Although statements and proofs here are usually combinatorially more complex, many definitions and techniques used in \cite{W:EI1} carry over nicely to our multivariable setting. Manifold calculus has had many applications in the past decade \cite{ALV, LTV:Vass, M:LinkNumber, M:Emb, S:TSK, S:OKS, V:FTK}. With an eye toward extending some of them, we wish to generalize this theory to the setting where $M$ breaks up as a disjoint union of manifolds, say $M=\coprod_{i=1}^mP_i$.  The first observation is that there is an equivalence of categories 
\begin{equation}\label{E:EquivCateg}
\mathcal{O}\left(\coprod_{i=1}^mP_i\right)\cong\prod_{i=1}^m\mathcal{O}(P_i)
\end{equation}
and we may thus view an open set $U$ in $M$ as both a disjoint union $U_1\coprod \cdots\coprod U_m$ and an $m$-tuple $(U_1,\ldots, U_m)$.  Single variable manifold calculus  is already good enough to study functors $F:\mathcal{O}(\coprod_{i=1}^mP_i)\rightarrow\Spaces$, but it is useful to think of $F:\prod_{i=1}^m\mathcal{O}(P_i)\rightarrow\Spaces$ as a functor of several variables as well, and try to do calculus one variable at a time.  

The stages of the Taylor tower $T_kF$ mimic $k^{th}$ degree Taylor polynomials of an ordinary smooth function $f\colon \R\to\R$ and $L_kF$ corresponds to the homogeneous degree $k$ part of its Taylor series.  Further, $L_kF$ contains information about the analog of the $k^{th}$ derivative of $f$.  A natural place to begin our generalization of manifold calculus  to more than one variable might then be to look at the generalization of the calculus of smooth functions $f:\R\rightarrow\R$ to smooth functions $f:\R^m\rightarrow\R$.  A function is differentiable at $\vec{a}\in\R^m$ if there is a linear transformation $L:\R^m\rightarrow\R$ such that
$$
\underset{\vec{h}\rightarrow\vec{0}}{\lim}\frac{f(\vec{a}+\vec{h})-f(\vec{a})-L(\vec{h})}{|\vec{h}|}=0.
$$

One immediately is led to wondering how to find such a linear transformation $L$. It would be nice, for example, to describe $L$ as a $1\times m$ matrix. This leads to a desire to use coordinates on $\R^m$ itself, and the discovery of partial derivatives. Indeed, using the usual basis $\{e_i\}$ for $\R^m$, we can write $\vec{x}=x_1e_1+\cdots+ x_me_m$, and it is also useful to write this as a tuple $\vec{x}=(x_1,\ldots, x_m)$. One advantage of partial derivatives is that they are computed by fixing all but one of the variables: 
$$
\frac{\del f}{\del x_i}(a_1,\ldots, a_m)=\underset{h\rightarrow 0}{\lim}\frac{f(a_1,\ldots, a_i+h,\ldots, a_m)-f(a_1,\ldots, a_m)}{h}.
$$
Another nice thing about partial derivatives is that they represent the linear transformation $L$ in the form of the desired matrix. The notions of the derivative as a linear transformation and the derivative as a matrix both have their uses.

Thus  one way to think about this paper is that it introduces coordinates to the study of contravariant functors $F\colon M\rightarrow\Spaces$ where $M=\coprod_{i=1}^mP_i$. That is, view $\mathcal{O}(\coprod_{i=1}^mP_i)$ as the analog of $\R^m$, and view the equivalence of categories from \eqref{E:EquivCateg} as the analog of writing $\R^m=\R\times\cdots\times\R$. We will analogously set up a theory of calculus which allows us to treat each of the variable inputs $U_i\in\mathcal{O}(P_i)$ separately, and eventually obtain a good notion of mixed partial derivatives.

Although the importance of derivatives cannot be overstated, the philosophy of calculus of functors is centered around finding polynomial approximations and Taylor series for a given functor. It is from a good definition of polynomial that we obtain an object which deserves the name ``derivative''. This is where we differ from ordinary calculus, where one can motivate the idea of Taylor polynomials of a function $f$ by the obvious generalization of linearization.  In other words, one seeks a polynomial of a certain degree whose values and the values of whose derivatives up to a certain degree agree with those of $f$ at some point. Of course, the derivatives of $f$ determine the coefficients of the polynomial.

We will therefore begin by building polynomial approximations and obtain from them the notion of derivatives. Just as one can read off the derivatives of a function at a point by looking at the coefficients of the Taylor series, we will use the ``coefficients'' of our Taylor series to define derivatives. This having been said, it is nevertheless fairly easy to immediately give an analog of the derivative of a functor in our setting which is at least plausible. Let us consider the first and second derivatives for concreteness. The analog of difference for us is homotopy fiber, and so the analog of of $f(x+h)-f(x)$ is the following: If $U$ and $V$ are disjoint open balls, then $\hofiber(F(U\cup V)\rightarrow F(U))$ is the first derivative of $F$ at $U$. Now consider the following unorthodox formula for the second derivative of a function $f:\R\rightarrow\R$:

$$f''(x)=\underset{h_1\rightarrow 0}{\lim}\;\underset{h_2\rightarrow 0}{\lim}\frac{f(x+h_1+h_2)-f(x+h_1)-f(x+h_2)+f(x)}{h_1h_2}.$$

We draw the reader's attention to the numerator of the above expression when considering the next formula. Let $U,V_1,V_2$ be disjoint open balls. Then $$\hofiber(\hofiber(F(U\cup V_1\cup V_2)\rightarrow F(U\cup V_1))\rightarrow\hofiber(F(U\cup V_2)\rightarrow F(U)))$$

is the second derivative of $F$. The last expression can be rewritten in a way more amenable to generalization as the so-called total homotopy fiber of the commutative square (or a 2-cubical diagram)
$$
\xymatrix{
F(U\cup V_1\cup V_2) \ar[r]\ar[d] & F(U\cup V_1)\ar[d]\\
F(U\cup V_2)  \ar[r] & F(U)\\
}
$$
As a direct generalization of this, there is an analogous formula for the $n^{th}$ derivative given by the total homotopy fiber of a certain $n$-cubical diagram of spaces.


There are two main motivations and uses for the work developed in this paper.  One is to better understand the space of \emph{link maps} $\Link(P_1,\ldots, P_m;N)$, which is the space of smooth maps $f=\coprod_i f_i\colon\coprod_iP_i\rightarrow N$ such that $f_i(P_i)\cap f_j(P_j)=\emptyset$ for all $i\neq j$. We can think of this as a functor of $\mathcal{O}(\coprod_iP_i)=\prod_i\mathcal{O}(P_i)\rightarrow\Spaces$, and so it is a functor of several variables. This space has been studied by many \cite{CR:LinkingInv, HabLin-Classif, HQ:Bordism, Kosch-Milnor, Milnor-Mu, ST:HOIN, Scott:HoLinks, Skop:Massey-Rolfsen} and the first author has in fact already applied Weiss' manifold calculus to it in \cite{M:LinkNumber} (see also \cite{GM:LinksEstimates}).  Exploring the connection further may in particular lead to a new (and more conceptual) proof of the Habegger-Lin classification of homotopy string links \cite{HabLin-Classif} and provide a new framework for Koschorke's generalizations of Milnor invariants \cite{Kosch-Milnor}.

The other motivation is the study of embeddings and link maps of $\coprod_i\R$ in $\R^n$, $n\geq 3$, i.e.~the study of (long) links and homotopy links.  Manifold calculus was used very effectively in the study of embeddings of $\R$ in $\R^n$ and the idea is to generalize many of the results obtained in that case using multivariable calculus.  For example, it was shown in \cite{V:FTK} that the single variable Taylor tower for long knots in $\R^3$ classifies finite type knot invariants.  This relied on a construction of a cosimplicial model arising from the Taylor tower \cite{S:TSK} and the associated spectral sequences.   In \cite{MV:Links}, we give multi-cosimplicial analogs for links and link maps and deduce an analogous results, namely that the multivariable Taylor tower contains all finite type invariants of links and homotopy links.   This is in turn expected to lead to a way of recognizing  classical Milnor invariants in the multivariable Taylor tower.  The crucial ingredient in \cite{MV:Links} is the finite model for the multivariable Taylor tower from \refS{FiniteModels}.  



\subsection{Organization of the paper}\label{S:Organization}


This paper is organized as follows:

\textbullet\ \ In \refS{Conventions}, we set some notational conventions and state the definitions used throughout the paper.

\textbullet\ \ In \refS{CalculusReview}, we survey some of the main results of \cite{W:EI1}, with an emphasis on the results we desire to generalize to the multivariable setting. We include a few examples in \refS{SetupExamples} and introduce the language of cubical diagrams in \refS{Cubes}. We review the main definitions and results about polynomial functors and the stages $T_kF$ of the Taylor tower in \refS{Polynomials}.    The convergence of the Taylor tower for the embedding functor (\refT{gwemb}), which is the main example in the theory, is recalled in \refS{Convergence}.  Homogeneous functors and their classification (\refT{homogclass}) are reviewed in \refS{Homogeneous}.  Finally, we discuss some technicalities regarding the passage to manifolds with boundary in \refS{ManifoldsBoundary} in the cases we ultimately care most about, namely embeddings and link maps.

\textbullet\ \ In \refS{MultivarCalculus}, we develop the analogs of the results discussed in \refS{Polynomials} and \refT{gwemb}.  We begin by discussing the equivalence of categories from equation \eqref{E:EquivCateg} in \refS{OpenSets}.  In \refS{MultiPolynomials}, we define the notion of a polynomial functor of  multidegree $\vec{\jmath}=(j_1, ..., j_m)$ (\refD{Multipoly}), give some examples, define the multivariable Taylor approximations to a functor (\refD{jMultiStage}) and its Taylor multi-tower, and show the approximations satisfy certain properties.  \refT{Multipolyclass} gives criteria for checking when two polynomials of the same multidegree are equivalent functors; this has applications in \refS{multihomog}. We then give a multivariable analog of \refT{gwemb} in \refS{MultiConvergence}.

\textbullet\ \ In \refS{multihomog}, which is the analog of \refS{Homogeneous}, we classify multivariable homogeneous functors.  In \refS{multihomogdef}, we discuss what one can deduce  from \refT{homogclass} which classifies homogeneous functors in single variable calculus.  This is the content of \refP{homogdisjoint}.  We then use this to motivate our definition of homogeneous multivariable polynomials (\refD{MultiHomogeneous}).  The goal then is to prove a classification theorem for such polynomials in \refS{Classification}  (\refT{multihomogclass}), but to do so we need to complete some preliminary work on homogeneous functors that look like spaces of sections of some fibration in \refS{Sections}.  It is precisely the fibers of this fibration which deserve to be called the mixed partial derivatives of a functor.  Finally, in \refS{fibersclasslink} we work out the fibers of the classifying fibration from \refT{multihomogclass}.

\textbullet\ \ In \refS{TowersRelation}, we compare the single variable and multivariable Taylor towers and their stages. \refT{twotowers} essentally tells us how to put the Taylor approximations in the multivariable setting together to obtain the Taylor approximations in the single variable setting.  The latter theorem uses 
the classification of homogeneous multivariable functors (\refT{multihomogclass}).

\textbullet\ \ In \refS{FiniteModels},  we give a non-functorial finite model for the Taylor approximations in the case where $F$ is the embedding or link maps functor and $M=\coprod_i I$ is a disjoint union of intervals (so $F$ is the space of string links or homotopy string links respectively). These are precisely the functors which will be studied in greater detail in \cite{MV:Links}.


\subsection{Acknowledgements}  The authors would like to thank Tom Goodwillie and Victor Turchin for helpful conversations, and the referee for a thorough report.


\section{Conventions}\label{S:Conventions}


Throughout the paper, we will assume the reader is familiar with homotopy limits and colimits.  Following Weiss \cite{W:EI1}, our spaces will be fibrant simplicial sets. Our examples are all in one way or another related to the space of maps $\Map(X,Y)$, which has a simplicial structure as follows: a $k$-simplex is a fiber-preserving map $f_k:X\times\Delta^k\rightarrow Y\times\Delta^k$; that is, $f_k(x,s)=(y,s)$. If $X$ and $Y$ are smooth manifolds, then $f_k$ should be smooth.  Other conventions are as follows.

{\bf Sets:}

\textbullet\ \ 
For a nonnegative integer $k$, let $[k]$ denote the ordered set $\{0,1,\ldots, k\}$, and let $\underline{k}$ denote the set $\{1,2,\ldots, k\}$. 

\textbullet\ \ 
For a finite set $S$, we let $\abs{S}$ stand for its cardinality. 

\textbullet\ \ 
Let $\mathcal{P}(S)$ stand for the poset of all subsets of $S$, and $\mathcal{P}_0(S)$ the subposet of all non-empty subsets of $S$. For a tuple $\vec{S}=(S_1,\ldots, S_m)$ of finite sets, let $\mathcal{P}(\vec{S})=\prod_{i=1}^m\mathcal{P}(S_i)$. Let $\mathcal{P}_0([\vec{\jmath}])=\mathcal{P}_0([j_1])\times\cdots\times\mathcal{P}_0([j_m])$ be the full subposet of those $\vec{S}=(S_1,\ldots, S_m)$ such that $S_i\neq\emptyset$ for all $i$.

\textbullet\ \ 
For tuples $\vec{\jmath}=(j_1,j_2,\ldots, j_m)$ and $\vec{\jmath}'=(j'_1,j'_2,\ldots, j'_m)$ of integers, we say $\vec{\jmath}\leq\vec{\jmath}'$ if $j_i\leq j'_i$ for all $1\leq i\leq m$, and $\vec{\jmath}<\vec{\jmath}'$ if $\vec{\jmath}\leq\vec{\jmath}'$ and there exists $i$ such that $j_i<j_i'$. 

\textbullet\ \ 
Let $|\vec{\jmath}|=\sum_i\abs{j_i}$, and $|\vec{\jmath}-\vec{k}|=\sum_i\abs{j_i-k_i}$. 

\textbullet\ \ 
Let $\mathcal{Z}^m$ denote the poset whose objects are $m$-tuples $\vec{\jmath}=(j_1,\ldots, j_m)$ of non-negative integers, and with $\vec{k}\leq\vec{\jmath}$ if $k_i\leq j_i$ for all $i$. Let 
$\mathcal{Z}^m_{\leq\vec{\jmath}}$ (respectively $\mathcal{Z}^m_{<\vec{\jmath}}$) denote the subcategory of all $\vec{k}$ which satisfy $\vec{k}\leq\vec{\jmath}$ (respectively $\vec{k}<\vec{\jmath}$). Let 
$\mathcal{Z}^m_{\leq k}$ (respectively $\mathcal{Z}^m_{<k}$) denote the subcategory of all $\vec{\jmath}$ which satisfy $|\vec{\jmath}|\leq k$ (respectively $|\vec{\jmath}|< k$).

{\bf Spaces:}

\textbullet\ \ 
For a space $X$ and a nonnegative integer $k$, let $C(k,X)$ be the configuration space of $k$ points in $X$, in other words
$$C(k,X)=X^k-\Delta_{fat}(X^k),$$ where 
$$\Delta_{fat}(X^k)=\{(x_1,x_2,\ldots, x_k)|x_i=x_j\mbox{ for some }i\neq j\}$$ is the \emph{fat diagonal}. When $X$ and $k$ are understood, the fat diagonal will be denoted simply by $\Delta_{fat}$. 

\textbullet\ \ 
Let $sp_kX=X^k/\Sigma_k$ denote the $k$th symmetric product of  $X$.

\textbullet\ \ 
Let ${X\choose k}=C(k,X)/\Sigma_k$ be the quotient of $C(k,X)$ by the free action of the symmetric group $\Sigma_k$ which permutes the coordinates.  This is the space of unordered configurations of $k$ points in $X$. It is the complement of the image of the fat diagonal in $sp_kX$. We will also denote the image of $\Delta_{fat}$ in $sp_kX$ by $\Delta_{fat}$, which should cause no confusion since it will always be clear from context what we mean.

\textbullet\ \
For $\vec{X}=(X_1,\ldots, X_m)$ and a tuple of nonnegative integers $\vec{\jmath}=(j_1\ldots, j_m)$, let $sp_{\vec{\jmath}}\vec{X}=\prod_isp_{j_i}X_i$

\textbullet\ \ 
For an $m$-tuple of spaces $\vec{X}=(X_1,\ldots, X_m)$ and an $m$-tuple of nonnegative integers $\vec{\jmath}=(j_1,\ldots, j_m)$, let 
$$
\Delta_{\vec{fat}}(\vec{X}^{\vec{\jmath}})=\{\vec{x}=(x_1,\ldots, x_m)\, |\, x_i\in X_i^{j_i} \text{ and there exists $i$ with $x_i\in\Delta_{fat}(X^{j_i}_i)$}\}.
$$ 
Again, when $\vec{X}$ and $\vec{\jmath}$ are understood, we will denote this simply by $\Delta_{\vec{fat}}$. Let 
$$
\Delta_{fat_i}(\vec{X}^{\vec{\jmath}})=X_1^{j_1}\times\cdots\times X_{i-1}^{j_{i-1}}\times\Delta_{fat}(X_i^{j_i})\times X_{i+1}^{j_{i+1}}\times\cdots\times X_{m}^{j_{m}}.
$$ 
Thus we have 
$$
\Delta_{\vec{fat}}(\vec{X}^{\vec{\jmath}})=\cup_{i=1}^m\Delta_{fat_i}(\vec{X}^{\vec{\jmath}}).
$$

\textbullet\ \ 
Let $\vec{X}\choose\vec{\jmath}$ denote the product $\prod_i{X_i\choose j_i}$, and let $sp_{\vec{\jmath}}\vec{X}=\prod_{i}sp_{j_i}X_i$.

{\bf Categories and functors:}

\textbullet\ \ We let $\Spaces$ denote the category of fibrant simplicial sets, and $\Spaces_\ast$ the category of based fibrant simplicial sets.

\textbullet\ \ 
When we say ``functor", we will mean either a covariant or a contravariant one. When it is not clear from the context or useful to point out, we will specify the variance.


\section{Review of the single variable manifold calculus of functors}\label{S:CalculusReview}


We begin with a survey of the main results of manifold calculus. Details can be found in \cite{W:EI1, GW:EI2}. We will generalize many of these results to the multivariable setting in Section \ref{S:MultivarCalculus}. 


\subsection{Setup and examples}\label{S:SetupExamples}


Let $M$ be a smooth compact manifold, and $\mathcal{O}(M)$ the poset of open subsets of $M$ with inclusions as morphisms. Manifold calculus studies contravariant functors
$$F:\mathcal{O}(M)\longrightarrow\mathcal{C},
$$
where $\mathcal{C}$ is usually the category of spaces or spectra.  For us, $\mathcal{C}$ will always be $\Spaces$ or $\Spaces_\ast$.

\begin{defin}
Let $M$ be as above, and let $U,V\in\mathcal{O}(M)$ with $U\subset V$. The inclusion $i:U\hookrightarrow V$ is an \emph{isotopy equivalence} if there exists an embedding $e:V\rightarrow U$ with the property that $i\circ e$ and $e\circ i$ are isotopic to $1_V$ and $1_U$ respectively.
\end{defin}

One may think of an isotopy equivalence $U\hookrightarrow V$ as a ``thickening'' of $U$. The functors we study are required to satisfy the following two axioms.

\begin{defin}\label{D:goodfunctor}
A contravariant functor $F:\mathcal{O}(M)\longrightarrow\Spaces$ is \emph{good} if 
\begin{enumerate}
\item It takes isotopy equivalences to homotopy equivalences, and 
\item For any nested sequence $\{U_i\}$ of open subsets of $M$, the map 
$$ 
F (\cup U_i) \longrightarrow \underset{i}{\holim}\, \, F(U_i)
$$
is a weak equivalence.
\end{enumerate}
\end{defin}

The second part of this definition says that the values of a good functor are  determined by their values on the interiors of compact codimension zero submanifolds of $M$. Since one typically is only interested in the values on such open sets, one could start with functors defined on the category of open sets in $M$ which are the interiors of compact codimension zero submanifolds, with inclusion maps as the morphisms, and extend along the inclusion of this category into $\mathcal{O}(M)$. In this sense, the second axiom is optional. The reader may freely choose either according to her tastes. Here are some examples of good functors, where the open set $U\subset M$ is the variable.


\begin{def/ex}\label{DE:MainExamples}\ \ 
Let $X$ be any space, $M$ and $N$ be smooth manifolds with $M$ compact, and let $U\in\mathcal{O}(M)$.
\begin{itemize}
\item  $\Map(U, X)$, the space of maps from $U$ to $X$;
\item  $\Map\left({U\choose k}, X\right)$;
\item  $\Imm(U,N)$, the space of immersions of $U$ into $N$;
\item  $\Emb(U,N)$, the space of embeddings of $U$ into $N$;
\item  $\overline{\Emb}(U,N)=\hofiber(\Emb(U,N)\hookrightarrow \Imm(U,N))$;
\item $\Link(U_1, U_2, ..., U_m; N)$, the space of link maps from $\coprod_{i=1}^n U_i$ to $N$ (i.e.~smooth maps such that the images of the $U_i$ are disjoint). Here $M=\coprod_{i=1}^mP_i$, the $P_i$ are of dimension $p_i$, and $U_i\in\mathcal{O}(P_i)$.
\end{itemize}
\end{def/ex}

All of these are contravariant functors since an inclusion of open subsets of $M$ gives rise to a restriction map. The last two functors are of most concern to us, specifically in the case where $M$ is one-dimensional \cite{MV:Links}. Before we continue, it will be useful to survey some definitions and basic results about cubical diagrams, a useful organizational tool central to calculus of functors.

\begin{rem}
A word about basepoints is in order. We will assume all of our functors are based in the sense that, using the setup as above, $F(M)$ has a preferred basepoint, which then bases $F(U)$ for all $U$ via the map $F(M)\to F(U)$. In particular examples this may or may not be possible. For instance, one might be interested in the functor $F(U)=\Emb(U,N)$, where $U$ is an open subset of $M$, and hope to build an element of $F(M)$ from elements of $F(U)$ for various $U$. It is possible for $F(M)$ to be empty even if $F(U)$ are non-empty for many choices of $U$. For instance, consider the functor $U\mapsto\Emb(U,\R^2)$ for $U\subset S^2$. Unfortunately, the most useful machinery, namely the classification of homogeneous functors, requires a choice of basepoint in $F(M)$ to work smoothly, and while one can still achieve partial results with weaker hypotheses, it streamlines the theory a great deal to assume a basepoint exists. Similar comments apply to the multivariable functors which we consider in this paper.
\end{rem}


\subsection{Cubical diagrams and homotopy limits}\label{S:Cubes}


The purpose of this section is to remind the reader of some relevant definitions and a few useful results.  Details about cubical diagrams can be found in \cite[Section 1]{CalcII}. 

\begin{defin}
Let $T$ be a finite set. A \textsl{$\abs{T}$-cube} of spaces is a covariant functor $$\mathcal{X}\colon\mathcal{P}(T)\longrightarrow\Spaces.$$
\end{defin}

\begin{defin}
A $|T|$-cube of spaces is \emph{$k$-cartesian} if the map

$$
\mathcal{X}(\emptyset)\longrightarrow\underset{S\neq\emptyset}{\holim}\, \, \mathcal{X}(S)
$$

is $k$-connected. In case $k=\infty$, i.e.~ if the map is a weak equivalence, we say the cube $\mathcal{X}$ is \textsl{homotopy cartesian}. Dually, it is \emph{$k$-cocartesian} if the map

$$
\underset{S\neq T}{\hocolim}\, \, \mathcal{X}(S)\longrightarrow\mathcal{X}(T)
$$

is $k$-connected, and \emph{homotopy cocartesian} if $k=\infty$.
\end{defin}

\begin{rem}
When $|T|=2$, the notions of homotopy cartesian (resp.~ homotopy cocartesian) square and homotopy pullback (resp.~ homotopy pushout) square agree.
\end{rem}

\begin{defin}
The \textsl{total homotopy fiber}, or \textsl{total fiber}, of a $\abs{T}$-cube $\mathcal{X}$ of based spaces, denoted $\tfiber(S\mapsto \mathcal{X}(S))$ or $\tfiber(\mathcal{X})$, is the homotopy fiber of the map 
$$
\mathcal{X}(\emptyset)\longrightarrow\underset{S\neq\emptyset}{\holim}\, \, \mathcal{X}(S).
$$ 
\end{defin}

The total fiber can also be thought of as an iterated homotopy fiber. That is, view a $\abs{T}$-cube $\mathcal{X}$ as a map (i.e.~a natural transformation) of $(\abs{T}-1)$-cubes $\mathcal{Y}\rightarrow\mathcal{Z}$. In this case, $\tfiber(\mathcal{X})=\hofiber(\tfiber(\mathcal{Y})\rightarrow\tfiber(\mathcal{Z}))$. More precisely, we have

\begin{prop}\label{P:IteratedHofiber}
Suppose $\mathcal{X}$ is a $\abs{T}$-cube of based spaces, and let $t\in T$. Then there is an equivalence $$\tfiber(\mathcal{X})\simeq\hofiber(\tfiber(S\mapsto \mathcal{X}(S))\longrightarrow\tfiber(S\mapsto \mathcal{X}(S\cup\{t\})),$$ 
where $S$ ranges through subsets of $T-\{t\}$.
\end{prop}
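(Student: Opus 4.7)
The plan is to establish the equivalence as a Fubini-type commutation of iterated total fibers over the product decomposition of the indexing poset.

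First, I would write $T' = T - \{t\}$ and use the poset isomorphism $\mathcal{P}(T) \cong \mathcal{P}(T') \times [1]$, where the $[1] = \{0,1\}$ factor records whether $t$ lies in the subset. Under this identification, the $|T|$-cube $\mathcal{X}$ is tantamount to a natural transformation $\alpha\colon \mathcal{Y} \Rightarrow \mathcal{Z}$ of $(|T|-1)$-sub-cubes on $\mathcal{P}(T')$, with $\mathcal{Y}(S) = \mathcal{X}(S)$ and $\mathcal{Z}(S) = \mathcal{X}(S \cup \{t\})$ for $S \subseteq T'$.

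Next, $\tfiber(\mathcal{X})$ can be computed in two stages according to this product. Taking the total fiber over $\mathcal{P}(T')$ first converts $\alpha$ into a map of spaces $\tfiber(\mathcal{Y}) \to \tfiber(\mathcal{Z})$; taking the total fiber over the remaining $[1]$ factor is then, by definition, the homotopy fiber of that map. So if the iterated computation agrees with the one-shot total fiber over $\mathcal{P}(T)$, the proposition follows.

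The main obstacle is precisely justifying this Fubini-type commutation. I would do so by unpacking $\tfiber(\mathcal{X}) = \hofiber(\mathcal{X}(\emptyset) \to \holim_{\mathcal{P}_0(T)} \mathcal{X})$ and decomposing $\mathcal{P}_0(T)$ into the subposet $\mathcal{P}_0(T')$ of non-empty subsets not containing $t$ and the subposet of subsets containing $t$ (isomorphic as a poset to $\mathcal{P}(T')$ via $S \mapsto S \cup \{t\}$), together with the ``bridge'' morphisms $S \to S' \cup \{t\}$ whenever $S \subseteq S'$ in $\mathcal{P}(T')$. Standard manipulation of homotopy limits over this decomposition produces a homotopy pullback description of $\holim_{\mathcal{P}_0(T)} \mathcal{X}$ in terms of $\holim_{\mathcal{P}_0(T')}\mathcal{Y}$, $\mathcal{Z}(\emptyset) = \holim_{\mathcal{P}(T')} \mathcal{Z}$, and $\holim_{\mathcal{P}_0(T')}\mathcal{Z}$. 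Taking the homotopy fiber of the canonical map $\mathcal{X}(\emptyset) = \mathcal{Y}(\emptyset)$ into this pullback, and invoking the ``fiber of a pullback is pullback of fibers'' principle, then yields $\hofiber(\tfiber(\mathcal{Y}) \to \tfiber(\mathcal{Z}))$. Alternatively, one can proceed by induction on $|T|$, with the base case $|T|=1$ immediate (both sides reduce to $\hofiber(\mathcal{X}(\emptyset) \to \mathcal{X}(\{t\}))$) and the inductive step appealing to Fubini for homotopy limits as found in \cite{CalcII}.
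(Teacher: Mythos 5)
Your argument is correct, but note that the paper itself gives no proof of this proposition; it simply records it as a standard fact about cubical diagrams, deferring to Section 1 of Goodwillie's \emph{Calculus II}, and what you have written is essentially the standard argument found there. Your key step — splitting $\mathcal{P}_0(T)$ into $\mathcal{P}_0(T-\{t\})$ and the subsets containing $t$ (the latter poset having $\{t\}$ as initial object, so its homotopy limit is $\mathcal{X}(\{t\})=\mathcal{Z}(\emptyset)$), and thereby exhibiting $\holim_{\mathcal{P}_0(T)}\mathcal{X}$ as the homotopy pullback of $\holim_{\mathcal{P}_0(T-\{t\})}\mathcal{Y}\to\holim_{\mathcal{P}_0(T-\{t\})}\mathcal{Z}\leftarrow\mathcal{Z}(\emptyset)$ — is exactly the right decomposition. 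One small caution: your closing appeal to ``fiber of a pullback is pullback of fibers'' is, when unwound, precisely the $\abs{T}=2$ instance of the statement being proved (the total fiber of the square with corners $\mathcal{Y}(\emptyset)$, $\holim_{\mathcal{P}_0(T-\{t\})}\mathcal{Y}$, $\mathcal{Z}(\emptyset)$, $\holim_{\mathcal{P}_0(T-\{t\})}\mathcal{Z}$ computed two ways), so to avoid circularity you should verify that square-level fact directly, e.g.\ by commuting the two homotopy limits defining it; once that is done your argument is complete. The alternative induction on $\abs{T}$ you sketch would also work, but it again rests on the same Fubini-type commutation, so it is not really a shortcut.
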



\begin{prop}\label{P:juxtaposecubes}
Let $\mathcal{X}$ and $\mathcal{Y}$ be $k$-cartesian $\abs{T}$-cubes of based spaces, and suppose that, for some $t\in T$, $\mathcal{X}(S\cup\{t\})=\mathcal{Y}(S)$ for all $S\subset T-\{t\}$. Then the $\abs{T}$-cube $\mathcal{Z}$ defined by $\mathcal{Z}(S)=\mathcal{X}(S)$ and $\mathcal{Z}(S\cup\{t\})=\mathcal{Y}(S\cup\{t\}$ for $S\subset T-\{t\})$ is also $k$-cartesian.
\end{prop}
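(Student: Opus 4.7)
The plan is to reduce the assertion to a standard fact about homotopy fibers of compositions by means of \refP{IteratedHofiber}, applied with respect to the direction $t$ along which $\mathcal{X}$ and $\mathcal{Y}$ are being glued.

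First I would view each of $\mathcal{X}$, $\mathcal{Y}$, $\mathcal{Z}$ as a natural transformation of $(|T|-1)$-cubes by restricting to subsets of $T-\{t\}$. Write $\mathcal{W}_0(S) = \mathcal{W}(S)$ and $\mathcal{W}_1(S) = \mathcal{W}(S \cup \{t\})$ for $S \subset T-\{t\}$. By \refP{IteratedHofiber},
$$
\tfiber(\mathcal{W}) \simeq \hofiber\!\left(\tfiber(\mathcal{W}_0) \longrightarrow \tfiber(\mathcal{W}_1)\right)
$$
for each of $\mathcal{W} = \mathcal{X},\mathcal{Y},\mathcal{Z}$. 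The hypothesis $\mathcal{X}(S\cup\{t\}) = \mathcal{Y}(S)$ translates to the identification of $(|T|-1)$-cubes $\mathcal{X}_1 = \mathcal{Y}_0$, and the construction of $\mathcal{Z}$ gives $\mathcal{Z}_0 = \mathcal{X}_0$ and $\mathcal{Z}_1 = \mathcal{Y}_1$. The natural transformation $\mathcal{Z}_0 \to \mathcal{Z}_1$ induced by the structure maps of $\mathcal{Z}$ along $t$ is precisely the composite of the two natural transformations $\mathcal{X}_0 \to \mathcal{X}_1 = \mathcal{Y}_0 \to \mathcal{Y}_1$ coming from $\mathcal{X}$ and $\mathcal{Y}$.

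Next, apply $\tfiber$ to this composition of $(|T|-1)$-cubes to obtain a composition of maps of based spaces
$$
\tfiber(\mathcal{X}_0) \stackrel{f}{\longrightarrow} \tfiber(\mathcal{X}_1) = \tfiber(\mathcal{Y}_0) \stackrel{g}{\longrightarrow} \tfiber(\mathcal{Y}_1).
$$
The standard fiber sequence $\hofiber(f) \to \hofiber(g \circ f) \to \hofiber(g)$ associated to a composition, combined with the identifications from \refP{IteratedHofiber} in the previous paragraph, yields a fiber sequence
$$
\tfiber(\mathcal{X}) \longrightarrow \tfiber(\mathcal{Z}) \longrightarrow \tfiber(\mathcal{Y}).
$$

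Finally, since $\mathcal{X}$ and $\mathcal{Y}$ are $k$-cartesian, the total fibers $\tfiber(\mathcal{X})$ and $\tfiber(\mathcal{Y})$ are $(k-1)$-connected. In a fiber sequence $F \to E \to B$ in which both $F$ and $B$ are $(k-1)$-connected, the long exact sequence in homotopy groups immediately gives that $E$ is $(k-1)$-connected. Thus $\tfiber(\mathcal{Z})$ is $(k-1)$-connected, equivalently $\mathcal{Z}$ is $k$-cartesian. The only step requiring real care, and hence the main obstacle, is verifying that the natural transformation $\mathcal{Z}_0 \to \mathcal{Z}_1$ really is the composition of the corresponding transformations for $\mathcal{X}$ and $\mathcal{Y}$ after the identification $\mathcal{X}_1 = \mathcal{Y}_0$; this is a matter of unwinding the definition of $\mathcal{Z}$ on each arrow of the cube, but it is purely bookkeeping and contains no homotopical content.
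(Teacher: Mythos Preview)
Your proof is correct and follows essentially the same approach as the paper: both view each cube as a map of $(|T|-1)$-cubes in the $t$-direction via \refP{IteratedHofiber}, identify the structure map of $\mathcal{Z}$ as the composite of those for $\mathcal{X}$ and $\mathcal{Y}$, and then use an elementary connectivity argument. The only cosmetic difference is that the paper concludes by invoking ``a composite of $k$-connected maps is $k$-connected,'' whereas you pass through the fiber sequence $\tfiber(\mathcal{X})\to\tfiber(\mathcal{Z})\to\tfiber(\mathcal{Y})$ and the long exact sequence; these are equivalent one-line facts.
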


\begin{proof}
View $\mathcal{X}$ and $\mathcal{Y}$ as $1$-cubes of $(\abs{T}-1)$-cubes $(S\mapsto \mathcal{X}(S))\rightarrow (S\mapsto \mathcal{X}(S\cup\{t\}))$ and$(S\mapsto \mathcal{Y}(S))\rightarrow (S\mapsto \mathcal{Y}(S\cup\{t\}))$ respectively, where $S\subset T-\{t\}$. By hypothesis, the maps
$$
\tfiber(S\mapsto \mathcal{X}(S))\rightarrow\tfiber(S\mapsto \mathcal{X}(S\cup\{t\}))$$ and $$\tfiber(S\mapsto \mathcal{Y}(S))\rightarrow\tfiber(S\mapsto \mathcal{Y}(S\cup\{t\})
$$ 
are $k$-connected, and hence, since $(S\mapsto \mathcal{X}(S\cup\{t\}))=(S\mapsto \mathcal{Y}(S))$, the composed map 
$$\tfiber(S\mapsto \mathcal{X}(S))\rightarrow \tfiber(S\mapsto \mathcal{Y}(S\cup\{t\})
$$ 
is $k$-connected, and therefore $\mathcal{Z}$ is $k$-cartesian.
\end{proof}

We end with a result which is not about cubical diagrams, but which is useful for studying functors of more than one variable. It can be found in \cite[Ch. XI, Example 4.3]{BK}.  The statement clearly generalizes to the product of more than two categories. 

\begin{prop}\label{P:holimproduct}
If $F:\mathcal{C}_1\times\mathcal{C}_2\rightarrow\Spaces$ is a bifunctor, then there are homeomorphisms $$\underset{\mathcal{C}_2}{\holim}\, \,\underset{\mathcal{C}_1}{\holim}\, \, F\cong\underset{\mathcal{C}_1\times\mathcal{C}_2}{\holim}\, \, F\cong\underset{\mathcal{C}_1}{\holim}\, \,\underset{\mathcal{C}_2}{\holim}\, \, F.$$
\end{prop}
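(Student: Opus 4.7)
The plan is to identify all three homotopy limits with a common mapping-space description and read off the two homeomorphisms from the exponential law. Using the Bousfield--Kan construction, for a functor $G\colon\mathcal{C}\to\Spaces$ the homotopy limit can be written as an end
$$\holim_\mathcal{C} G \;\cong\; \int_{c\in\mathcal{C}} G(c)^{N(c\downarrow \mathcal{C})},$$
or equivalently as $\Tot$ of the cosimplicial replacement $\Pi^\bullet G$ whose $n$-th term is $\prod_{c_0\to\cdots\to c_n} G(c_n)$. Since our spaces are fibrant simplicial sets, the cotensor $X^K$ behaves strictly as a mapping space, and the relevant identifications below will all be honest homeomorphisms, not merely weak equivalences.

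First I would use the fact that the nerve construction commutes with products: $N(\mathcal{C}_1\times\mathcal{C}_2)\cong N(\mathcal{C}_1)\times N(\mathcal{C}_2)$, and correspondingly
$$N\bigl((c_1,c_2)\downarrow(\mathcal{C}_1\times\mathcal{C}_2)\bigr)\;\cong\;N(c_1\downarrow\mathcal{C}_1)\times N(c_2\downarrow\mathcal{C}_2).$$
Consequently the cosimplicial replacement of $F$ over $\mathcal{C}_1\times\mathcal{C}_2$ organizes naturally as a bicosimplicial space $X^{p,q}=\prod F(c_p,d_q)$, indexed by pairs consisting of a composable $p$-string in $\mathcal{C}_1$ and a composable $q$-string in $\mathcal{C}_2$. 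Its diagonal computes $\holim_{\mathcal{C}_1\times\mathcal{C}_2}F$.

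Next I would invoke the exponential law $Y^{K\times L}\cong (Y^K)^L\cong (Y^L)^K$ for simplicial cotensors. Applied inside the end formula with $K=N(c_1\downarrow\mathcal{C}_1)$ and $L=N(c_2\downarrow\mathcal{C}_2)$, and using Fubini for ends (which is itself an instance of exponential adjunction), one gets
$$\int_{(c_1,c_2)} F(c_1,c_2)^{K\times L} \;\cong\; \int_{c_1}\Bigl(\int_{c_2} F(c_1,c_2)^{L}\Bigr)^{K} \;\cong\; \holim_{\mathcal{C}_1}\holim_{\mathcal{C}_2} F,$$
and symmetrically $\cong\holim_{\mathcal{C}_2}\holim_{\mathcal{C}_1}F$ by swapping the order in which the exponential law is applied. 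This yields both claimed homeomorphisms at once.

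The main thing to be careful about is that Bousfield--Kan's construction is often quoted only up to weak equivalence, whereas the proposition asserts strict homeomorphisms. This is not a difficulty so long as one works consistently with the cotensor/end model on fibrant simplicial sets: the exponential law and the Fubini theorem for ends are then strict natural isomorphisms, and no cofibrant replacement or comparison is needed. The cited example in \cite{BK} is exactly this observation, and the argument extends verbatim to any finite product of categories.
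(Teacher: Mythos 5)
Your argument is correct: the comma category of a product decomposes as a product of comma categories, the nerve preserves products, and the exponential law together with Fubini for ends gives strict isomorphisms, which is exactly what the statement's claim of homeomorphisms (rather than mere weak equivalences) requires. The paper itself gives no proof but only the citation to \cite[Ch.~XI, Example 4.3]{BK}, and your end/cotensor computation is precisely the standard argument underlying that citation (the aside about the bicosimplicial replacement and its diagonal is not needed for the proof, and the choice of over- versus under-category in the weight is a convention that does not affect the argument).
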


\subsection{Polynomial functors and Taylor tower}\label{S:Polynomials}


Manifold calculus seeks to approximate a good functor $F$ by a sequence of functors $T_kF$ which are ``polynomial of degree $\leq k$'', and are the analogs of the $k^{th}$ degree Taylor approximations to a function in ordinary calculus.

\begin{defin}\label{D:Poly}
A good functor $F:\mathcal{O}(M)\rightarrow\Spaces$ is said to be  \textsl{polynomial of degree $\leq k$} if for all $U\in\mathcal{O}(M)$ and for all pairwise disjoint non-empty closed subsets $A_0,A_1,\ldots A_k$ of $U$, the map 
$$F(U)\longrightarrow\underset{S\neq\emptyset}{\holim}\, \, F(U_S),$$ where $U_S=U-\cup_{i\in S}A_i$, is a homotopy equivalence.
\end{defin}

An ordinary polynomial of degree $k$ in a single variable is, of course, determined by its values on $k+1$ distinct points, which is what this definition is attempting to mimic. 

\begin{example}
The functor $U\mapsto\Map(U,X)$ is polynomial of degree $\leq 1$ for any space $X$, as is $U\mapsto\Imm(U,N)$. The functor $U\mapsto\Map\left({U\choose k},X\right)$ is polynomial of degree $\leq k$. See \cite[Examples 2.3 and 2.4]{W:EI1}.
\end{example}

\begin{defin}
For $k\geq0$, let $\mathcal{O}_k(M)$ denote the full subcategory of open sets diffeomorphic to at most $k$ disjoint open balls. 
\end{defin}
Now we can define the $k^{th}$ degree polynomial functors $T_kF$.

\begin{defin}\label{D:kStage}  Define the \emph{$k^{th}$ degree Taylor approximation of $F$} to be 
$$T_kF(U)=\underset{\mathcal{O}_k(U)}{\holim}\, \, F.$$
\end{defin}

 The $T_kF$ themselves are good functors. Note that $T_0F\simeq F(\emptyset)$.

Also note that if $U\in\mathcal{O}_k(M)$, then $U$ is a final object in $\mathcal{O}_k(U)$ and so (since $F$ is contravariant) the natural map $F(U)\rightarrow T_kF(U)$ is an equivalence. Hence $F$ and $T_kF$ agree when the input is at most $k$ open balls. 

It is clear that there are canonical maps $F\rightarrow T_kF$ and $T_kF\rightarrow T_{k-1}F$ for all $k\geq1$, and that they are all compatible in the sense that the obvious diagrams commute.  Thus we can make the following definition.

\begin{defin}\label{D:SingleTaylorTower}
The \emph{Taylor tower of $F$} is the sequence of functors 
$$
F\longrightarrow\left(T_0F\leftarrow T_1F\leftarrow\cdots\leftarrow T_kF\leftarrow\cdots\leftarrow T_{\infty}F    \right)
$$
(pictured with the canonical map from $F$) where
$$
T_{\infty}F = \underset{k}{\holim}\, T_kF.
$$
Polynomial approximation $T_kF$ will also be called  the \emph{$k^{th}$ stage} of the Taylor tower of $F$.
\end{defin}

In our subsequent work \cite{MV:Links}, we will work with certain ``smaller" models for stages $T_kF$ in some special cases.  These will be defined in Section \ref{S:FiniteModels}.

It is somewhat tedious to show that $T_k F$ is polynomial of degree $\leq k$ (stated below with a couple of other facts as \refT{tkpoly}). The basic idea is to modify the definition of $\mathcal{O}_k(M)$ with respect to some open cover of $M$ so that each component of an element $U\in\mathcal{O}_k(M)$ is contained in some set in the open cover. One shows that the homotopy limit in $\refD{kStage}$ is equivalent to a homotopy limit over the category of open balls whose components are contained in a set in an open cover of $M$, and when the pairwise disjoint closed sets $A_0,\ldots, A_k$ are chosen, selects an open cover such that none of the open sets in the cover meet more than one $A_i$. Then the pigeonhole principle implies that each $U$ of at most $k$ open balls must miss some $A_i$. Finally one applies the following technical lemma, which we will require in \refP{mcubehomog} and \refT{twotowers}, to finish the proof.

\begin{defin}\label{D:ideal}
An \emph{ideal} in a poset $\mathcal{Q}$ is a subset $\mathcal{R}$ of $\mathcal{Q}$ such that for all $b\in\mathcal{R}$, $a\in\mathcal{Q}$ with $a\leq b$ implies $a\in\mathcal{R}$.
\end{defin}

\begin{lemma}[\cite{W:EI1}, Lemma 4.2]\label{L:posetideals}
Suppose the poset $\mathcal{Q}$ is a union of ideals $\mathcal{Q}_i$ with $i\in T$, where $T$ is some finite set. For a finite nonempty subset $S\subset T$, let $\mathcal{Q}_S=\cap_{i\in S}\mathcal{Q}_i$. Let $E$ be a contravariant functor from $\mathcal{Q}$ to $\Spaces$. Then the canonical map

$$\underset{\mathcal{Q}}{\holim}\, E\longrightarrow\underset{S\neq\emptyset}{\holim}\, \underset{\mathcal{Q}_S}{\holim}\, E$$

is an equivalence. 
\end{lemma}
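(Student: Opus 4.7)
The plan is to prove this by a cofinality (Quillen Theorem A) argument that reduces the statement to the contractibility of a very simple poset. Given the covering $\{\mathcal{Q}_i\}_{i\in T}$, for each $q\in\mathcal{Q}$ define $T_q = \{i\in T : q\in\mathcal{Q}_i\}$, which is nonempty because the $\mathcal{Q}_i$ cover $\mathcal{Q}$. The key observation is that $\mathcal{P}_0(T_q)$ has a terminal object (namely $T_q$ itself), hence contractible nerve, and this is what will make the cofinality check succeed.

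First I would apply a Fubini-type theorem for homotopy limits (in the spirit of \refP{holimproduct}, extended to Grothendieck constructions of diagrams of categories as in Bousfield--Kan Ch.~XI) to rewrite the iterated homotopy limit on the right-hand side as $\holim_{\mathcal{G}}E\circ\pi$, where $\mathcal{G}$ is the Grothendieck construction of the diagram $S\mapsto\mathcal{Q}_S$ on $\mathcal{P}_0(T)$ and $\pi\colon\mathcal{G}\to\mathcal{Q}$ is the natural projection $(S,q)\mapsto q$. The canonical map of the lemma then factors as
$$
\holim_{\mathcal{Q}}E\ \longrightarrow\ \holim_{\mathcal{G}}E\circ\pi\ \simeq\ \holim_{S\neq\emptyset}\holim_{\mathcal{Q}_S}E,
$$
so it suffices to show the first arrow is a weak equivalence.

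By the version of Quillen's Theorem A appropriate for homotopy limits of contravariant functors, the first arrow is an equivalence provided that for each $q\in\mathcal{Q}$ the comma category of $\pi$ at $q$ has contractible nerve. This is precisely where the ideal hypothesis is needed: because each $\mathcal{Q}_i$ is downward-closed, the condition $q\in\mathcal{Q}_S$ is equivalent to $S\subseteq T_q$, and any $q'\leq q$ automatically lies in $\mathcal{Q}_S$ whenever $S\subseteq T_q$. Unwinding the definition of the comma category, these two facts let one exhibit it as (equivalent to) the poset $\mathcal{P}_0(T_q)$, which is contractible by the opening observation.

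The main obstacle is bookkeeping the variances correctly: $E$ is contravariant on $\mathcal{Q}$, the assignment $S\mapsto\mathcal{Q}_S$ is contravariant on $\mathcal{P}_0(T)$, and one has to orient the Grothendieck construction and the Theorem~A comma category so that both the Fubini identification and the cofinality check produce the same answer; without care, one could easily end up computing the fiber over the wrong variance and find a non-contractible category. As a safer backup I would fall back on induction on $|T|$: the base case $|T|=2$ is a Mayer--Vietoris square of homotopy limits, which follows from the Bousfield--Kan cosimplicial formula together with the observation (again from the ideal property) that a chain $q_0\leq\cdots\leq q_n$ in $\mathcal{Q}$ lies in $\mathcal{Q}_a$ if and only if its top element $q_n$ does; the inductive step then splits the punctured $|T|$-cube into two punctured $(|T|-1)$-cubes along a chosen coordinate $t\in T$ and invokes the inductive hypothesis on $\bigcup_{i\neq t}\mathcal{Q}_i$ and on its intersection with $\mathcal{Q}_t$, combining via \refP{juxtaposecubes}.
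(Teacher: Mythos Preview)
The paper does not provide its own proof of this lemma; it is quoted verbatim from \cite[Lemma~4.2]{W:EI1} and invoked as a black box in the proofs of \refT{tkpoly}, \refP{mcubehomog}, and \refT{twotowers}. There is therefore nothing in the present paper to compare your argument against.

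That said, your cofinality approach via the Grothendieck construction is the standard way to prove statements of this form, and the key computation---that the relevant comma category over $q\in\mathcal{Q}$ is (equivalent to) $\mathcal{P}_0(T_q)$, which is contractible because $T_q$ is terminal---is correct and is exactly where the ideal hypothesis enters. Your caveat about tracking variances is well taken but not a genuine obstruction: once one fixes the convention (here $E$ contravariant means one works with $E$ as a covariant functor on $\mathcal{Q}^{op}$, and the diagram $S\mapsto\mathcal{Q}_S$ is contravariant in $S$), the Grothendieck construction and the appropriate comma category are determined, and the identification with $\mathcal{P}_0(T_q)$ goes through. Your backup induction on $|T|$ is also perfectly valid and arguably more elementary, though the Mayer--Vietoris base case already contains the essential content.
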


The following important statement, which characterizes polynomials, is Theorem 5.1 in \cite{W:EI1}.

\begin{thm}\label{T:polyclass}
Let $k$ be a nonnegative integer, and let $F\rightarrow G$ be a natural transformation between polynomials of degree $\leq k$. If $F(U)\rightarrow G(U)$ is an equivalence for all $U\in\mathcal{O}_k(M)$, then it is an equivalence for all $U\in\mathcal{O}(M)$.
\end{thm}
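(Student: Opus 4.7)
The plan is to reduce to the hypothesized case $U\in\mathcal{O}_k(M)$ by an induction on the handle-theoretic complexity of $U$, exploiting the polynomial property of both $F$ and $G$. First, by clause (2) of \refD{goodfunctor}, every $U\in\mathcal{O}(M)$ is a filtered union of interiors of compact codimension-zero submanifolds, and both $F$ and $G$ convert such unions into homotopy limits; since homotopy limits preserve weak equivalences, it suffices to prove $F(U)\to G(U)$ is a weak equivalence when $U=\mathrm{int}(W)$ for a compact codimension-zero submanifold $W\subset M$.

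For such $U$, I would fix a handle decomposition of $W$ and induct on its total number of handles. The base case is $W$ a disjoint union of at most $k$ zero-handles, so that $U\in\mathcal{O}_k(M)$ and the hypothesis applies directly. For the inductive step, the goal is to produce $k+1$ pairwise disjoint, nonempty closed subsets $A_0,\ldots,A_k\subset U$ so that each $U_S := U-\bigcup_{i\in S}A_i$, for $\emptyset\neq S\subseteq\{0,1,\ldots,k\}$, has strictly smaller complexity than $U$. Given such $A_i$, \refD{Poly} applied to both $F$ and $G$ produces equivalences
\begin{equation*}
F(U)\xrightarrow{\simeq}\underset{S\neq\emptyset}{\holim}\,F(U_S)\qquad\text{and}\qquad G(U)\xrightarrow{\simeq}\underset{S\neq\emptyset}{\holim}\,G(U_S).
\end{equation*}
The inductive hypothesis gives that $F(U_S)\to G(U_S)$ is a weak equivalence for each nonempty $S$; homotopy limits then yield that the induced map is an equivalence, and a two-out-of-three argument in the resulting commutative square forces $F(U)\to G(U)$ to be an equivalence.

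The hard part will be the geometric construction of $A_0,\ldots,A_k$. The clean subcase is when $U$ has more than $k$ connected components: components of an open subset of a manifold are clopen in $U$, so any $k+1$ distinct components are pairwise disjoint and closed in $U$, and removing any nonempty union of them strictly reduces the component count. When $U$ has at most $k$ components but some carry handles of positive index, I would take the $A_i$ to be small closed neighborhoods of cocores of $k+1$ suitably chosen handles (possibly after passing via an isotopy equivalence to a finer handle decomposition), so that each $U_S$ admits a decomposition with strictly fewer handles. Arranging the choice so that the complexity drops in every $U_S$ simultaneously, in a way that fits the polynomial axiom, is the technically delicate step of the argument.
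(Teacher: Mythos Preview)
The paper does not prove this theorem; it is quoted as Theorem~5.1 of \cite{W:EI1} without argument. Your overall strategy---reduce via axiom (2) of \refD{goodfunctor} to interiors of compact codimension-zero submanifolds, then induct on handle complexity using the polynomial axiom---is exactly Weiss's, so in spirit your proposal matches the source the paper cites.

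There is, however, a genuine gap in your description of the inductive geometric step. You propose taking the $A_i$ to be neighborhoods of the cocores of $k+1$ ``suitably chosen handles,'' perhaps after refining the decomposition. This does not work as stated: if $W$ has fewer than $k+1$ handles of positive index there are not enough cocores to choose, and refining the decomposition to manufacture more handles increases the very quantity you are inducting on. The actual construction (Weiss, proof of \cite[Theorem~5.1]{W:EI1}) is to pick a \emph{single} handle of maximal index $q>0$ and take $A_0,\ldots,A_k$ to be $k+1$ disjoint \emph{parallel copies} of its cocore, slightly thickened. Removing any nonempty subcollection of these yields an open set isotopy equivalent to $U$ with that one handle deleted, so each $U_S$ has strictly fewer handles of top index; the induction is then lexicographic, first on the maximal handle index and then on the number of handles of that index. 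Your outline has the right shape, but the decisive idea---parallel cocores of one handle rather than cocores of many handles---is what makes the complexity drop for every nonempty $S$ simultaneously, and it is absent from your sketch.
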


Here is an easy but useful corollary, which can be used to prove part (2) of \refT{tkpoly}.

\begin{cor}\label{C:polyiffTk}
A good functor $F$ is polynomial of degree $\leq k$ if and only if the canonical map $F(U)\to T_kF(U)$ is an equivalence for all $U\in\mathcal{O}(M)$.
\end{cor}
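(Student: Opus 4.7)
The plan is to deduce this from Theorem \ref{T:polyclass} together with the fact (part of \refT{tkpoly}) that $T_kF$ itself is polynomial of degree $\leq k$.

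For the ``if'' direction, suppose $F(U)\to T_kF(U)$ is an equivalence for all $U\in\mathcal{O}(M)$. Then $F$ is levelwise weakly equivalent to the functor $T_kF$, which is polynomial of degree $\leq k$ by \refT{tkpoly}. Since the condition of being polynomial of degree $\leq k$ in \refD{Poly} is stated in terms of a map being a weak equivalence (and the relevant diagrams only involve values of the functor on open subsets of $U$), it is preserved under natural weak equivalences. Hence $F$ is itself polynomial of degree $\leq k$.

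For the ``only if'' direction, assume $F$ is polynomial of degree $\leq k$. I would apply \refT{polyclass} to the canonical natural transformation $F\to T_kF$. Both source and target are polynomial of degree $\leq k$: the source by hypothesis, the target by \refT{tkpoly}. So it suffices to check that $F(U)\to T_kF(U)$ is an equivalence for every $U\in\mathcal{O}_k(M)$. But this has already been observed in the text just after \refD{kStage}: when $U\in\mathcal{O}_k(M)$, the object $U$ is terminal in $\mathcal{O}_k(U)$, so the homotopy limit $T_kF(U)=\holim_{\mathcal{O}_k(U)} F$ is computed by evaluating at $U$, making the canonical map an equivalence. Then \refT{polyclass} promotes this to an equivalence on all of $\mathcal{O}(M)$.

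There is no real obstacle here; the whole argument is a direct packaging of \refT{polyclass} with the two already-known facts that $T_kF$ is polynomial of degree $\leq k$ and that $F\to T_kF$ is an equivalence on $\mathcal{O}_k(M)$. The only mild point worth being explicit about is the observation in the ``if'' direction that being polynomial of degree $\leq k$ is invariant under natural weak equivalence, which follows immediately from inspection of \refD{Poly}.
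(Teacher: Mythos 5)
Your proof is correct and is essentially the same as the paper's: the ``only if'' direction uses the agreement of $F$ and $T_kF$ on $\mathcal{O}_k(M)$ together with Theorem~\ref{T:polyclass} (with $T_kF$ polynomial of degree $\leq k$ by Theorem~\ref{T:tkpoly}(1), which the paper also uses implicitly and which does not depend on this corollary), and the ``if'' direction uses invariance of the polynomial condition under levelwise equivalence since homotopy limits preserve equivalences. No gaps.
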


\begin{proof}
If $F$ is polynomial of degree $\leq k$, then by remarks following \refD{kStage}, whenever $U\in\mathcal{O}(M)$ is a disjoint union of at most $k$ open balls, the map $F\to T_kF$ is an equivalence. It follows from \refT{polyclass} that $F(V)\to T_kF(V)$ is an equivalence for all $V\in\mathcal{O}(M)$. If $F\to T_kF$ is an equivalence, then since $T_kF$ is polynomial of degree $\leq k$, so must $F$ be, since homotopy limits preserve equivalences.
\end{proof}

The first two parts of the following are Theorem 6.1 in \cite{W:EI1}.  The third follows immediately from the definition of a polynomial functor and standard facts about cubical diagrams which can be found in \cite[Section 1]{CalcII}.

\begin{thm}\label{T:tkpoly}\ \ 
Let $k$ be a nonnegative integer.
\begin{enumerate}
\item $T_kF$ is polynomial of degree $\leq k$.
\item If $F$ is polynomial of degree $\leq k$, then $F\rightarrow T_kF$ is an equivalence.
\item If $F$ is polynomial of degree $\leq k$, then it is polynomial of degree $\leq l$ for all $k\leq l$.
\end{enumerate}
\end{thm}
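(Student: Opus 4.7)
The plan is to address the three claims essentially independently, taking them in order of increasing difficulty, with (1) carrying all of the real content.

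For (3), it suffices by induction on $l-k$ to treat the case $l=k+1$. Given $U$ and pairwise disjoint closed subsets $A_0,\ldots,A_{k+1}$ of $U$, view the $(k+2)$-cube $\mathcal{X}\colon S\mapsto F(U_S)$ for $S\subset[k+1]$ as a map of $(k+1)$-cubes $\mathcal{Y}\to\mathcal{Z}$ in the direction indexed by $k+1$, where $\mathcal{Y}(S)=F(U_S)$ and $\mathcal{Z}(S)=F(U_{S\cup\{k+1\}})$ for $S\subset\{0,\ldots,k\}$. Both $\mathcal{Y}$ and $\mathcal{Z}$ are homotopy cartesian by the polynomial hypothesis applied in $U$ and in $U-A_{k+1}$, respectively (the $A_i$ for $i\le k$ remain pairwise disjoint and closed after removing $A_{k+1}$). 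By \refP{IteratedHofiber}, $\tfiber(\mathcal{X})\simeq\hofiber(\tfiber(\mathcal{Y})\to\tfiber(\mathcal{Z}))$ is the homotopy fiber of a map between contractible spaces, hence contractible, so $\mathcal{X}$ is homotopy cartesian.

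For (2), assume $F$ is polynomial of degree $\leq k$. By part (1), $T_kF$ is as well, so $F\to T_kF$ is a natural transformation between two polynomials of degree $\leq k$. By the remarks following \refD{kStage}, whenever $U\in\mathcal{O}_k(M)$ the object $U$ is final in $\mathcal{O}_k(U)$ and so the canonical map $F(U)\to T_kF(U)$ is an equivalence. \refT{polyclass} then upgrades this to an equivalence on all of $\mathcal{O}(M)$.

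For (1), the main work, I would follow the strategy sketched in the paragraph preceding \refD{ideal}. Fix $U\in\mathcal{O}(M)$ and pairwise disjoint closed $A_0,\ldots,A_k$ in $U$, and choose an open cover $\mathcal{U}$ of $U$ such that no element of $\mathcal{U}$ meets more than one $A_i$; this is possible because the $A_i$ are closed and pairwise disjoint. Let $\mathcal{O}_k^{\mathcal{U}}(U)\subset\mathcal{O}_k(U)$ be the full subcategory whose objects are those $V$ each of whose components is contained in some member of $\mathcal{U}$. First, establish that the restriction
\[
T_kF(U)=\underset{\mathcal{O}_k(U)}{\holim}\,F\longrightarrow\underset{\mathcal{O}_k^{\mathcal{U}}(U)}{\holim}\,F
\]
is an equivalence, by a homotopy-cofinality argument that uses the first goodness axiom to shrink each component of an arbitrary $V\in\mathcal{O}_k(U)$ by an isotopy equivalence into a $\mathcal{U}$-small refinement. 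Next, set $\mathcal{Q}_i=\mathcal{O}_k^{\mathcal{U}}(U_{\{i\}})$; each $\mathcal{Q}_i$ is an ideal in $\mathcal{O}_k^{\mathcal{U}}(U)$. The pigeonhole principle yields $\mathcal{O}_k^{\mathcal{U}}(U)=\bigcup_{i=0}^{k}\mathcal{Q}_i$: any $V$ has at most $k$ components, each sitting inside a $\mathcal{U}$-set that meets at most one $A_i$, so with $k+1$ sets $A_0,\ldots,A_k$ to avoid, at least one is missed by all components. Since $\bigcap_{i\in S}\mathcal{Q}_i=\mathcal{O}_k^{\mathcal{U}}(U_S)$, \refL{posetideals} gives
\[
\underset{\mathcal{O}_k^{\mathcal{U}}(U)}{\holim}\,F\simeq\underset{S\neq\emptyset}{\holim}\,\underset{\mathcal{O}_k^{\mathcal{U}}(U_S)}{\holim}\,F,
\]
which, after reversing the cofinality equivalence on each $U_S$, is exactly the polynomial condition $T_kF(U)\simeq\holim_{S\neq\emptyset}T_kF(U_S)$.

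The main obstacle is the cofinality step for $\mathcal{O}_k^{\mathcal{U}}(U)\hookrightarrow\mathcal{O}_k(U)$: one must show that refining an arbitrary $V\in\mathcal{O}_k(U)$ to a $\mathcal{U}$-small one does not affect the homotopy limit of $F$. This is where goodness of $F$ and the compactness of $M$ enter essentially, via a tubular-neighborhood or partition-of-unity construction producing the necessary isotopy equivalences in a functorial enough way to identify the overcategories as contractible. Once this technicality is absorbed, the remainder of the argument is the clean combinatorial application of \refL{posetideals} indicated above.
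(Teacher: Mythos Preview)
Your proposal is correct and follows essentially the same approach as the paper. For (1) you reproduce precisely the sketch given in the paragraph preceding \refD{ideal} (refine to a cover-adapted subcategory, apply pigeonhole, then \refL{posetideals}), and you correctly flag the equivalence $\holim_{\mathcal{O}_k(U)}F\simeq\holim_{\mathcal{O}_k^{\mathcal{U}}(U)}F$ as the nontrivial technical input coming from Weiss; for (2) your argument is exactly that of \refC{polyiffTk}, which the paper explicitly says can be used to prove this part; and for (3) your iterated-fiber argument via \refP{IteratedHofiber} is the ``standard fact about cubical diagrams'' the paper invokes.
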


The following proposition establishes a universal property of the functors $T_kF$.

\begin{prop}\label{P:factorthru}
Let $F,G:\mathcal{O}(M)\rightarrow\Spaces$ be good functors. Suppose $G$ is polynomial of degree $\leq k$. Then a natural transformation $F\rightarrow G$ factors through $T_kF$.
\end{prop}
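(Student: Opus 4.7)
The plan is to exploit the naturality of $T_k$ together with the fact, recorded in \refT{tkpoly}(2) (equivalently \refC{polyiffTk}), that for a functor $G$ which is already polynomial of degree $\leq k$ the canonical map $G \to T_kG$ is a weak equivalence. Given a natural transformation $\eta \colon F \to G$, I would functorially apply the construction $T_k$ to obtain a natural transformation $T_k\eta \colon T_kF \to T_kG$. Since the formation of $T_k$ is compatible with natural transformations (its definition as $\holim_{\mathcal{O}_k(U)} F$ is manifestly functorial in $F$), this fits into a commutative square of natural transformations
$$
\xymatrix{
F \ar[r]^{\eta} \ar[d] & G \ar[d]^{\simeq} \\
T_kF \ar[r]_{T_k\eta} & T_kG
}
$$
whose vertical arrows are the canonical comparison maps $F \to T_kF$ and $G \to T_kG$.

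Next I would invoke the hypothesis that $G$ is polynomial of degree $\leq k$ to conclude via \refT{tkpoly}(2) that the right vertical arrow $G \to T_kG$ is an equivalence objectwise, that is, $G(U) \to T_kG(U)$ is a weak equivalence of fibrant simplicial sets for every $U \in \mathcal{O}(M)$. Since the target categories are the (fibrant) simplicial sets $\Spaces$ or $\Spaces_\ast$, one can choose a homotopy inverse $T_kG \xrightarrow{\simeq} G$ (for instance via the standard model-categorical machinery for inverting weak equivalences between fibrant objects, or simply by picking compatible homotopy inverses objectwise and noting the resulting natural transformation is well-defined up to natural homotopy).

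Composing then yields the desired factorization
$$
F \longrightarrow T_kF \xrightarrow{T_k\eta} T_kG \xrightarrow{\simeq} G,
$$
and the commutative square above ensures that this composite agrees up to natural homotopy with the original $\eta$. The only subtle point, and the one that needs to be acknowledged rather than technically overcome, is that ``factors through'' must be interpreted in the homotopical sense: the factorization holds up to natural weak equivalence, not on the nose, because we are inverting the equivalence $G \to T_kG$. This is standard in the setting of good functors landing in fibrant simplicial sets, so no further machinery is required.
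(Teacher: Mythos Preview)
Your proof is correct and follows essentially the same approach as the paper: apply $T_k$ to the natural transformation to get $T_kF\to T_kG$, then use a homotopy inverse of the equivalence $G\to T_kG$ (from \refT{tkpoly}(2)) to obtain $T_kF\to G$. The paper's argument is more terse, but the idea and the caveat that the factorization is only up to homotopy are identical.
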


\begin{proof}
The transformation $F\rightarrow G$ induces a map $T_kF\rightarrow T_kG$, and since $G\rightarrow T_kG$ is an equivalence, we can use a homotopy inverse to this to obtain a transformation $T_kF\rightarrow G$.
\end{proof}

One other fact which is useful is the following

\begin{prop}\label{P:iteratedT}
$T_jT_kF\simeq T_{\min\{j,k\}}F\simeq T_kT_jF$
\end{prop}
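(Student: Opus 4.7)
The plan is to reduce to the case $j \leq k$ by symmetry of the claim, and then establish the two equivalences $T_kT_jF \simeq T_jF$ and $T_jT_kF \simeq T_jF$ separately, using the characterization of polynomial functors via \refT{polyclass} together with the basic properties of $T_k$ from \refT{tkpoly}.

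The easy direction is $T_kT_jF \simeq T_jF$. By \refT{tkpoly}(1), $T_jF$ is polynomial of degree $\leq j$, and by \refT{tkpoly}(3), since $j \leq k$, it is also polynomial of degree $\leq k$. Hence \refT{tkpoly}(2) (equivalently \refC{polyiffTk}) gives that the canonical natural transformation $T_jF \to T_kT_jF$ is an equivalence.

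For the other direction, $T_jT_kF \simeq T_jF$, I would apply the functor $T_j$ to the canonical natural transformation $F \to T_kF$ to get a natural transformation $T_jF \to T_jT_kF$. Both source and target are polynomial of degree $\leq j$ by \refT{tkpoly}(1), so by the classification \refT{polyclass} it suffices to check that this map is an equivalence on $U \in \mathcal{O}_j(M)$. For such $U$, the remark following \refD{kStage} (that $F$ and $T_lF$ agree on $\mathcal{O}_l(M)$) gives $T_jF(U) \simeq F(U)$ and $T_jT_kF(U) \simeq T_kF(U)$; moreover, since $j \leq k$ we have $\mathcal{O}_j(M) \subseteq \mathcal{O}_k(M)$, so also $T_kF(U) \simeq F(U)$. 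Chasing these equivalences shows the map $T_jF(U) \to T_jT_kF(U)$ is an equivalence on $\mathcal{O}_j(M)$, and \refT{polyclass} upgrades this to an equivalence on all of $\mathcal{O}(M)$.

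There is no real obstacle here: the argument is essentially bookkeeping with the universal properties of $T_k$. The only subtlety worth being careful about is ensuring the natural transformations involved are the canonical ones so that one may invoke \refT{polyclass}, and noting that containment $\mathcal{O}_j(M)\subseteq\mathcal{O}_k(M)$ for $j\leq k$ is what allows the comparison $T_kF(U) \simeq F(U)$ to apply on $\mathcal{O}_j(M)$.
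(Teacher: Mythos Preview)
Your proof is correct and uses the same circle of ideas as the paper. The only organizational difference is this: for $T_jT_kF\simeq T_jF$ (with $j\leq k$), the paper argues directly from the definition of the homotopy limit, observing that for each $V\in\mathcal{O}_j(U)$ the object $V$ is final in $\mathcal{O}_k(V)$ so that $T_kF(V)\simeq F(V)$ and hence $\holim_{V\in\mathcal{O}_j(U)}T_kF(V)\simeq\holim_{V\in\mathcal{O}_j(U)}F(V)$; you instead package this same finality observation inside an appeal to \refT{polyclass}. For the remaining equivalence, the paper compares $T_jT_kF$ with $T_kT_jF$ directly via \refT{polyclass} on $\mathcal{O}_k$, whereas you go through $T_jF$ using \refT{tkpoly}(2) and (3). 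Both routes are equally valid; the paper's direct argument for the first part is marginally more elementary since it avoids invoking \refT{polyclass} there, while your argument for $T_kT_jF\simeq T_jF$ is cleaner than the paper's corresponding step.
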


\begin{proof}
Without loss of generality assume $j\leq k$.

For $U\in\mathcal{O}(M)$, we have 

$$T_jT_kF(U)=\underset{V\in\mathcal{O}_j(U)}{\holim}\, \, T_kF(V).$$

Of course, $T_kF(V)=\holim_{W\in\mathcal{O}_k(V)}F(W)$, and since $j\leq k$, then $V\in\mathcal{O}_k(V)$ is a final object, and hence $T_kF(V)\simeq F(V)$, and hence $T_jT_kF\simeq T_jF$. To show that $T_jT_k F\simeq T_kT_j F$, note that they are both polynomials of degree $\leq k$. By \refT{polyclass}, it is enough to verify they have the same values for $U\in\mathcal{O}_k$. On the one hand, since $U$ is final in $\mathcal{O}_k(U)$, we have $T_jT_kF(U)\simeq T_jF(U)$. On the other hand, $T_kT_jF(U)=T_k(T_jF)(U)\simeq T_jF(U)$ for the same reason.
\end{proof}





\subsection{Convergence of the Taylor tower for the embedding functor}\label{S:Convergence}


An important example in manifold calculus is the functor $U\mapsto\Emb(U,N)$ and its approximations $T_k\Emb(U,N)$. In fact, manifold calculus is often referred to in the literature as \emph{embedding calculus}. In this case, we have the following result of Goodwillie and Klein. 


\begin{thm}[\cite{GK}]\label{T:gwemb} Let $M$ and $N$ be smooth manifolds of dimensions $m$ and $n$ respectively.  Then the map $\Emb(M,N)\rightarrow T_k\Emb(M,N)$ is $(k(n-m-2)+1-m)$-connected.  In particular, if $n-m>2$, the map
$$
\Emb(M,N)\longrightarrow T_{\infty}\Emb(M,N)
$$
is an equivalence.
\end{thm}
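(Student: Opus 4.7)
The plan is to deduce this from the multiple disjunction theorem of Goodwillie--Klein for embedding spaces, which gives precise connectivity estimates for certain cubical diagrams of embeddings, and then translate those estimates into a connectivity estimate for $\Emb(M,N) \to T_k\Emb(M,N)$ via the machinery of polynomial functors.

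First I would reformulate what needs to be shown. By \refC{polyiffTk}, the connectivity of $\Emb(M,N)\to T_k\Emb(M,N)$ is governed by how close the embedding functor is to being polynomial of degree $\leq k$. Concretely, for pairwise disjoint non-empty closed subsets $A_0,A_1,\ldots,A_k$ of $M$, one considers the $(k+1)$-cube
$$\mathcal{X}(S)=\Emb\bigl(M-\cup_{i\in S}A_i,\, N\bigr),\qquad S\subset\{0,1,\ldots,k\},$$
and asks how cartesian it is. The multiple disjunction theorem asserts that when the $A_i$ are chosen to be handles (or small enough compact submanifolds), $\mathcal{X}$ is roughly $\bigl((k+1)(n-m-2)+c\bigr)$-cartesian, where $c$ depends only on the dimension of $M$. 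The proof of this disjunction estimate is the deep geometric input; it proceeds by induction on handles together with transversality and surgery-theoretic arguments, and is the main obstacle in the whole program.

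Given the disjunction estimate, I would argue as follows. Pick a handle decomposition of $M$ and, by induction on the number of handles, reduce to the case where $M$ itself is a disjoint union of handles and the result is easy to verify directly on $\mathcal{O}_k(M)$. For the inductive step $M=M'\cup H$, the attaching of a handle $H$ can be packaged as a map of cubes, and one uses \refP{juxtaposecubes} together with the disjunction estimate on $H$ to propagate connectivity from $M'$ to $M$. In parallel, one analyzes the layers of the Taylor tower: the homotopy fiber of $T_k\Emb\to T_{k-1}\Emb$ is the $k$th homogeneous layer, which by the classification in \refT{homogclass} is the space of sections of a fibration whose fibers are the relevant total homotopy fibers of $\mathcal{X}$ (the ``$k$th derivatives'' of $\Emb$). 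The disjunction estimate gives connectivity $(k(n-m-2)-m)$ for these fibers after accounting for the dimension of the base configuration space ${M \choose k}$, and assembling the estimates across $k=1,2,\ldots$ gives the stated bound $k(n-m-2)+1-m$.

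Finally, the ``in particular'' statement follows formally: when $n-m>2$, the connectivity bound $k(n-m-2)+1-m$ tends to infinity as $k\to\infty$, so
$$\Emb(M,N)\longrightarrow \underset{k}{\holim}\,T_k\Emb(M,N)=T_\infty\Emb(M,N)$$
is a weak equivalence. The hardest part of the proof is unquestionably the multiple disjunction theorem itself; once that is granted, the translation to the Taylor tower is essentially bookkeeping with homogeneous layers and connectivity of mapping spaces.
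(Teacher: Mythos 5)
This theorem is not proved in the paper at all: it is quoted from Goodwillie--Klein \cite{GK}, and the only in-paper commentary is the remark that the proof proceeds by induction on the handle index of codimension-zero submanifolds, together with the refined estimate $(k(n-l-2)+1-l)$ for handle dimension $\leq l$. So there is no internal argument to compare yours against; what you have written is an outline of the cited proof itself. As such an outline it is faithful to the standard strategy in the literature (the multiple disjunction theorem as the geometric engine, handle induction to globalize, and the homogeneous-layer/section-space analysis of \refT{homogclass} to control $T_k\to T_{k-1}$), and it is consistent with the paper's own remark about handle-index induction.

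Be aware, though, of what your sketch does and does not establish. The entire mathematical weight sits in the multiple disjunction theorem, which you (reasonably) take as a black box --- but that is precisely the content being cited, so your argument does not constitute an independent proof of the statement; within this paper the theorem is simply an imported input, used later in the proof of \refT{MultiConvergence}. Moreover, the connectivity bookkeeping in your middle paragraph is asserted rather than derived: the passage from the cartesianness of the cubes $S\mapsto\Emb(M-\cup_{i\in S}A_i,N)$ to the connectivity of the layers (a space of compactly supported sections over the $km$-dimensional base ${M\choose k}$), and then to the stated bound $k(n-m-2)+1-m$ for $\Emb(M,N)\to T_k\Emb(M,N)$, requires the careful estimates of Goodwillie--Weiss and Goodwillie--Klein, not just dimension counting. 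The final deduction that $n-m>2$ forces the connectivities to tend to infinity, hence an equivalence into $T_\infty\Emb(M,N)$, is correct and is exactly how the ``in particular'' clause follows from the quantitative statement.
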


We can refine this slightly: If  $M$ is replaced by the interior $U$ of a codimension $0$ submanifold $L$ which has a handlebody decomposition with handles of index at most $l$, then $\Emb(U,N)\rightarrow T_k\Emb(U,N)$ is $(k(n-l-2)+1-l)$-connected. Among other things, the proof of this theorem involves an induction on the handle index of such submanifolds.


\subsection{Homogeneous functors}\label{S:Homogeneous}


Of special interest are homogeneous functors, for they admit a classification theorem (\refT{homogclass} below). For a good functor $F$, choose a basepoint in $F(M)$, assuming one exists. This endows $F(U)$ with a basepoint for all $U\in\mathcal{O}(M)$.

\begin{defin}\label{D:Homogeneous}
A good functor $F\colon \mathcal{O}(M)\rightarrow\Spaces$ is \textsl{homogeneous of degree  $k$} if it is polynomial of degree $\leq k$ and $T_{k-1}F(U)$ is contractible for all $U\in\mathcal{O}(M)$.
\end{defin}

\begin{defin}\label{D:Layers}
Let $F$ be a good functor, and choose a basepoint in $F(M)$. Define the \emph{$k^{th}$ layer of the Taylor tower of $F$} to be 
$$L_kF=\hofiber(T_kF\longrightarrow T_{k-1}F).$$ 
\end{defin}

\begin{thm}[\cite{W:EI1}, Theorem 8.5]\label{T:homogclass}\ \ 
\begin{itemize}
\item The $k^{th}$ layer $L_kF$ is homogeneous of degree $k$.
\item Let $E$ be homogeneous of degree $k$. Then there is an equivalence, natural in $U$, 
$$E(U)\longrightarrow \Gamma^c\left({U\choose k},Z;p\right),$$ 
where $\Gamma^c$ denotes the space of sections supported away from the diagonal (i.e.~equal to a given section in some neighborhood of the fat diagonal) of a fibration $p:Z\rightarrow {U\choose k}$.
\end{itemize}
\end{thm}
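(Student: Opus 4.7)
The statement has two independent parts, and the plan is to address each in turn.

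For the first bullet, I would verify directly that $L_kF=\hofiber(T_kF\to T_{k-1}F)$ satisfies \refD{Homogeneous}. Polynomiality of degree $\leq k$ will follow from the fact that homotopy fibers commute with the homotopy limits appearing in \refD{Poly}, combined with \refT{tkpoly}(1) (which gives that $T_kF$ is polynomial of degree $\leq k$) and \refT{tkpoly}(3) (which upgrades the polynomial-of-degree-$\leq k-1$ property of $T_{k-1}F$ to degree $\leq k$). For the homogeneity condition, I would apply $T_{k-1}$ to the defining fiber sequence. Since $T_{k-1}$ is itself a homotopy limit, it preserves homotopy fibers, yielding $T_{k-1}L_kF\simeq\hofiber(T_{k-1}T_kF\to T_{k-1}T_{k-1}F)$. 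By \refP{iteratedT} both source and target are canonically equivalent to $T_{k-1}F$, and under these equivalences the map is an equivalence, so the fiber is contractible.

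The second bullet is the heart of the theorem, and I would build the proof in four steps. First, I would construct the classifying fibration $p\colon Z\to{M\choose k}$: over a configuration $c=\{x_1,\ldots,x_k\}$, the fiber should be the homotopy colimit $\hocolim_{V}E(V)$, where $V$ ranges over open neighborhoods of $c$ in $M$ that are disjoint unions of $k$ small balls, one around each $x_i$. Topologizing the total space suitably gives a fibration, and the chosen basepoint in $E(M)$ induces a distinguished section $s_0$ which serves as the ``given section'' in the definition of $\Gamma^c$. The second step is to define the natural transformation $\eta\colon E(U)\to\Gamma^c({U\choose k},Z;p)$: for $x\in E(U)$, send a configuration $c\subset U$ to the image of $x$ under the canonical map $E(U)\to Z_c$. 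This section will agree with $s_0$ on a neighborhood of the fat diagonal because near the diagonal a local model has strictly fewer than $k$ components, and the homogeneity of $E$ will force the relevant values to be trivial.

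The third step, the main technical hurdle, is to verify that $U\mapsto\Gamma^c({U\choose k},Z;p)$ is itself homogeneous of degree $k$. Polynomiality of degree $\leq k$ should come from analyzing how ${U\choose k}$ decomposes when $U$ is replaced by the cubical diagram $U_S=U-\cup_{i\in S}A_i$ for pairwise disjoint closed $A_0,\ldots,A_k$: a section on ${U\choose k}$ is recoverable from its restrictions to the various ${U_S\choose k}$ by a pigeonhole-type argument, since any unordered configuration of $k$ points must avoid some $A_i$. Contractibility of $T_{k-1}$ of the section functor reduces, since $T_{k-1}$ is a homotopy limit over $\mathcal{O}_{k-1}$, to showing the section space is contractible when $U$ consists of at most $k-1$ open balls; in that case every point of ${U\choose k}$ lies in a neighborhood of the fat diagonal, so every admissible section must coincide with $s_0$ everywhere. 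The fourth and final step is then to invoke \refT{polyclass}: both $E$ and the section-space functor are polynomial of degree $\leq k$, so it suffices to check $\eta$ is an equivalence on $\mathcal{O}_k(M)$. For $U$ a disjoint union of exactly $k$ open balls the map $\eta$ reduces to the identification of $E(U)$ with the fiber of $p$ over a configuration placing one point in each ball, which is an equivalence by construction of $Z$; for $U$ with fewer components both sides are contractible.

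The principal obstacle is the third step, in particular the polynomiality of the section-space functor. This will require a careful interplay between the configuration space ${U\choose k}$, the topology on the total space $Z$, and the $(k+1)$-dimensional cubical diagrams produced by pairwise disjoint closed subsets. The ``supported away from the diagonal'' qualifier is essential in both directions: it makes the section-space construction compatible with removing small closed sets (needed for polynomiality), and it enforces that the section functor vanishes on $\mathcal{O}_{k-1}$ (needed for the homogeneity property).
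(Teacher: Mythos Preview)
The paper does not itself prove \refT{homogclass}; it is quoted from \cite[Theorem~8.5]{W:EI1} as part of the review of single-variable manifold calculus. Your outline is nonetheless very close to Weiss's original argument and to the paper's proof of the multivariable analog, \refT{multihomogclass}.

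A few comparative remarks. Your first bullet is the standard argument and matches the paper's proof of \refP{multilayerhomog}. For the second bullet, the paper (following Weiss) constructs the classifying fibration more carefully than your sketch: rather than building the total space fiber-by-fiber over configurations, one introduces the category $\mathcal{I}^{(k)}(U)$ of open sets diffeomorphic to exactly $k$ balls with isotopy equivalences as morphisms, notes that $\abs{\mathcal{I}^{(k)}(U)}\simeq{U\choose k}$ (\refL{catconfig}), and then invokes Dwyer's lemma (\refL{quasifiberlemma}) to see that $\hocolim_{\mathcal{I}^{(k)}(U)}E$ quasifibers over $\abs{\mathcal{I}^{(k)}(U)}$ with section space $\holim_{\mathcal{I}^{(k)}(U)}E$. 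This produces both the fibration and the natural transformation $E\to\Gamma$ in one stroke and sidesteps the topologization issues implicit in your first step. Your third step, that $\Gamma^c$ is itself homogeneous of degree $k$, is handled in the paper (in the multivariable setting) by separate lemmas: \refL{sectionspoly} for polynomiality, and \refL{tjrlemma} together with \refP{sectionshomog} for the vanishing of lower approximations. Your pigeonhole argument for polynomiality is the right idea; your argument for contractibility on $\mathcal{O}_{k-1}$ (``every point of ${U\choose k}$ lies in a neighborhood of the fat diagonal'') is morally right but needs the observation that, for $U$ a union of fewer than $k$ balls, ${U\choose k}$ retracts into any neighborhood of the fat diagonal. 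The final appeal to \refT{polyclass} is exactly how both Weiss and the present paper conclude.
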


The fiber over $S$ of the fibration $p$ are the total homotopy fiber of a $k$-cube of spaces made up of the values of $E$ on a tubular neighborhood of $S$. With \refS{multihomog} in mind, it is useful to be a bit more explicit about the meaning of ``compactly supported sections''. Define

$$\Gamma\left(\del{U\choose k},Z;p\right)=\underset{Q\in\mathcal{N}}{\hocolim}\;\Gamma\left({U\choose k}\cap Q,Z;p\right),$$

where $\mathcal{N}$ is the category whose objects are neighborhoods $Q$ of $\Delta_{fat}$ in $sp_kU$, and whose morphisms are the inclusion maps. Then we define

\begin{equation}\label{E:bdysections}
\Gamma^c\left({U\choose k},Z;p\right)=\hofiber\left(\Gamma\left({U\choose k},Z;p\right)\rightarrow\Gamma\left(\del{U\choose k},Z;p\right)\right).
\end{equation}


\subsection{Manifolds with boundary}\label{S:ManifoldsBoundary}


Everything said so far can be extended to the case where $M$ has boundary. Here we wish to focus on the two functors we care about most, the space of embeddings and the space of link maps, which we will return to in \refS{FiniteModels}.  Suppose $M$ is a smooth compact manifold (possibly with many components, with possibly varying dimensions), with boundary $\del M$ and let $\mathcal{O}(M)$ denote the poset of open subsets of $M$ which contain $\del M$. Further suppose that $M$ is a neat submanifold of a manifold $N$ with boundary; in other words, $M$ meets $\del N$ transversely and $\del M=M\cap \del N$ (for details, see, for example,  Definition 2.2 in \cite{Kos:DiffMflds}). 

\begin{defin}\label{D:BoundaryFunctors}\ \ 
\begin{itemize}
\item For $U$ in $\mathcal{O}(M)$, let $\Emb_{\del}(U,N)$ be the space of smooth embeddings of $U$ in $N$ which agree with the inclusion $M\hookrightarrow N$ near $\del M$. 
\item If $M=\coprod_i P_i$ is a disjoint union, for $U\in\mathcal{O}(M)$ let $U_i=U\cap P_i$. We define $\Link_\del(U_1,\ldots, U_n;N)$ to be the space of link maps of $\coprod_i U_i\rightarrow N$ which agree with the inclusion $M\hookrightarrow N$ near $\del M$.
\end{itemize}
\end{defin}

 Both of these functors are good. 
 Of most interest to us is the case where $P_i=I$ for all $i$ and $N=\R^{n-1}\times I$, which will be studied carefully in \cite{MV:Links}.

\section{Multivariable manifold calculus of functors}\label{S:MultivarCalculus}


We now generalize the definition of the Taylor tower and several results from the previous section to the setting where $M$ is a disjoint union of manifolds by developing a calculus which allows us to treat each of the variables separately.


\subsection{Categories of open sets}\label{S:OpenSets}


Let $P_1,\ldots, P_m$ be smooth compact manifolds, and consider the poset $\mathcal{O}(\coprod_{i}P_i)$ of open subsets of $\coprod_{i}P_i$. There is an equivalence of categories 
\begin{align*}
\mathcal{O}\left(\coprod_{i}P_i\right) & \longrightarrow \prod_i\mathcal{O}(P_i) \\ 
    U & \longmapsto (U\cap P_1, \ldots, U\cap P_m).
\end{align*} 
 We will often denote elements $(U_1,\ldots, U_m)\in\prod_i\mathcal{O}(P_i)$ by $\vec{U}$. As before, $\mathcal{O}_k(\coprod_{i}P_i)$ is the full subcategory of $\mathcal{O}(\coprod_{i}P_i)$ whose objects are open sets diffeomorphic to at most $k$ open balls in $\coprod_{i}P_i$.  

\begin{defin}
For a tuple of non-negative integers $\vec{\jmath}=(j_1,\ldots, j_m)$ and $\vec{U}=(U_1,\ldots, U_m)$ where $U_i\in\mathcal{O}(P_i)$, define $\mathcal{O}(\vec{U})=\prod_i\mathcal{O}(U_i)$, and $\mathcal{O}_{\vec{\jmath}}(\vec{U})=\prod_i\mathcal{O}_{j_i}(U_i)$.
\end{defin}



\subsection{Multivariable polynomial functors and multivariable Taylor tower}\label{S:MultiPolynomials}


This section is the analog of \refS{Polynomials}.  In fact, each of definitions and statements \ref{D:Poly}--\ref{P:iteratedT} from that section has a direct analog here. 
 In addition, this section contains a variety of examples of multivarible polynomial functors.

The equivalence of categories 
$$\mathcal{O}\left(\coprod_iP_i\right)\cong\prod_i\mathcal{O}(P_i)$$ gives two ways of viewing a functor $F$ defined on it. On the one hand, when we view the domain as the category $\mathcal{O}(\coprod_iP_i)$, we think of $F$ as a function of a single variable $U\in\mathcal{O}(\coprod_iP_i)$. Here we may apply the Goodwillie-Weiss single variable calculus from Section \ref{S:CalculusReview}. On the other hand, the category $\prod_i\mathcal{O}(P_i)=\mathcal{O}(\vec{P})$ has objects $\vec{U}=(U_1,\ldots, U_m)$ and we will develop a multivariable calculus which allows us to treat the variables separately. The first step is to introduce a suitable notion of a multivariable polynomial functor. It is clear what we should mean by a ``good'' functor. The analog of \refD{goodfunctor} is

\begin{defin}\label{D:multigoodfunctor}
A contravariant functor $F:\mathcal{O}(\vec{P})\longrightarrow\Spaces$ is \emph{good} if 
\begin{enumerate}
\item It takes isotopy equivalences to homotopy equivalences, and 
\item For any nested sequence $\{\vec{U}_i\}$ of open subsets of $\coprod_iP_i$, the map 
$$
F (\cup \vec{U}_i) \longrightarrow \underset{i}{\holim}\, \, F(\vec{U}_i)
$$
is a weak equivalence.
\end{enumerate}
\end{defin}

Now we will define what it means to be polynomial. Let $F:\mathcal{O}(\vec{P})\rightarrow\Spaces$ again be a good (contravariant) functor, let $\vec{U}=(U_1\ldots, U_m)\in\mathcal{O}(\vec{P})$, and let $\vec{\jmath}=(j_1,\ldots, j_m)$ be an $m$-tuple of non-negative integers. For all $1\leq i\leq m$, let $A_0^i,\ldots, A_{j_i}^i$ be pairwise disjoint closed subsets of $U_i$. For each $i$, let $U_{S_i}=U_i-\cup_{j\in S_i}A_{j}^i$, let $\vec{U}_{\vec{S}}=(U_{S_1},\ldots, U_{S_m})$, where $\vec{S}=(S_1,S_2,\ldots S_m)$ denote an element of $\mathcal{P}([j_1])\times\cdots\times\mathcal{P}([j_m])$. Recall that $\mathcal{P}([\vec{\jmath}])=\mathcal{P}([j_1])\times\cdots\times\mathcal{P}([j_m])$, and let $\mathcal{P}_0([\vec{\jmath}])=\mathcal{P}_0([j_1])\times\cdots\times\mathcal{P}_0([j_m])$ be the full subcategory of those $\vec{S}=(S_1,\ldots, S_m)$ such that $S_i\neq\emptyset$ for all $i$.

The following is the analog of \refD{Poly}.

\begin{defin}\label{D:Multipoly}
A good functor $F:\mathcal{O}(\vec{P})\rightarrow\Spaces$ is said to be \textsl{polynomial of degree $\leq\vec{\jmath}=(j_1,\ldots, j_m)$} if, for all $\vec{U}=(U_1,\ldots, U_m)\in\mathcal{O}(\vec{P})$ and for all pairwise disjoint closed subsets $A_0^i,\ldots, A_{j_i}^i$ in $U_i$, $1\leq i\leq m$, the map 
\begin{equation}\label{E:polydef}
F(\vec{U})\longrightarrow\underset{\vec{S}\in\mathcal{P}_0([\vec{\jmath}])}{\holim}\, \, F(\vec{U}_{\vec{S}})
\end{equation}
is an equivalence.
\end{defin}

\begin{rem}
If $j_i=-1$ for some $i$ and the product category is empty, the target of the map in \refE{polydef} is a point, and hence only the constant functor, all of whose values are contractible, satisfies the definition. Note that we do not require the sets $A^i_k$ to be nonempty. In the case, $m=1$, the definition is trivially satisfied in case some $A_k=\emptyset$, since  the $(j+1)$-cube in question is homotopy cartesian because we may view it as a map of identical $j$-cubes. In general, suppose $A^i_k=\emptyset$ for some $1\leq i\leq m$ and some $0\leq k\leq j_i$. Since $F(U_1,\ldots, U_{S_i},\ldots, U_m)\to F(U_1,\ldots, U_{S_i\cup k},\ldots, U_m)$ is an equivalence for all $S_i\in\mathcal{P}_0[j_i]$ which do not contain $k$, the homotopy limit of $F$ over $\mathcal{P}_0[j_i]$ is equivalent to the homotopy limit of $F$ over the full subcategory of those $S_i\in\mathcal{P}_0[j_i]$ such that $k\in S_i$. This has an initial object, namely $\{k\}$, and hence the homotopy limit over this category is equivalent to $F(U_1,\ldots, U_i,\ldots, U_m)$. It follows that the homotopy limit of $F$ over $\mathcal{P}_0[\vec{\jmath}]$ is equivalent to the homotopy limit over $\mathcal{P}_0[j_1]\times\cdots\times\mathcal{P}_0[j_{i-1}]\times\mathcal{P}_0[j_{i+1}]\times\cdots\times\mathcal{P}_0[j_m]$.
\end{rem}

\begin{prop}\label{P:multipolyseparate}
A good functor $F:\mathcal{O}(\vec{P})\rightarrow\Spaces$ is polynomial of degree $\leq\vec{\jmath}=(j_1,\ldots, j_m)$ if and only if it is polynomial of degree $j_i$ in the $i$th variable for all $i$.
\end{prop}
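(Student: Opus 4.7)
Both implications hinge on two tools: Proposition \ref{P:holimproduct}, which computes a homotopy limit over a product of categories as an iterated homotopy limit, and the Remark following \refD{Multipoly}, which says that if one of the closed sets $A^{i'}_k$ is empty then the $i'$-th direction of the homotopy limit over $\mathcal{P}_0([\vec{\jmath}])$ collapses (the factor $\mathcal{P}_0([j_{i'}])$ drops out and the $i'$-th input to $F$ is simply $U_{i'}$). The forward direction uses the Remark to whittle the multidegree condition down to a single-variable one, while the reverse direction builds the multidegree condition up from the single-variable ones applied coordinatewise.

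\emph{Forward direction.} Fix $i$, fix $U_{i'}\in\mathcal{O}(P_{i'})$ for $i'\neq i$, and let $A^i_0,\ldots, A^i_{j_i}$ be pairwise disjoint closed subsets of $U_i$. I would invoke the multidegree hypothesis with the auxiliary choice $A^{i'}_k=\emptyset$ for every $i'\neq i$ and every $k$. Applying the Remark after \refD{Multipoly} once for each $i'\neq i$ collapses the homotopy limit over $\mathcal{P}_0([\vec{\jmath}])$ to
$$\holim_{S_i\in\mathcal{P}_0([j_i])}F(U_1,\ldots, U_{S_i},\ldots, U_m),$$
so the multidegree equivalence becomes exactly the assertion that $F$ is polynomial of degree $\leq j_i$ in the $i$-th variable.

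\emph{Reverse direction.} Fix $\vec{U}$ and pairwise disjoint closed subsets $A^i_0,\ldots, A^i_{j_i}\subset U_i$ for every $i$. By Proposition \ref{P:holimproduct},
$$\holim_{\vec{S}\in\mathcal{P}_0([\vec{\jmath}])}F(\vec{U}_{\vec{S}}) \,\simeq\, \holim_{S_1}\holim_{S_2}\cdots\holim_{S_m} F(U_{S_1},\ldots, U_{S_m}).$$
I would peel off the homotopy limits from the inside out. Polynomialness in the $m$-th variable, applied with the first $m-1$ slots held at $U_{S_1},\ldots, U_{S_{m-1}}$, yields a natural equivalence $F(U_{S_1},\ldots, U_{S_{m-1}},U_m)\simeq \holim_{S_m}F(U_{S_1},\ldots, U_{S_{m-1}},U_{S_m})$; wrapping this in $\holim_{S_{m-1}}$ and invoking polynomialness in the $(m-1)$-th variable yields the next equivalence, and iterating $m$ times identifies $F(\vec{U})$ with the iterated homotopy limit.

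There is no serious obstacle here; the content is essentially bookkeeping. The only care required is to verify at each stage of the peel-off that the inner variables are genuinely held fixed so that the relevant one-variable polynomial hypothesis applies verbatim, and that the composite of canonical equivalences agrees with the canonical map in \refE{polydef}; both follow from the standard fact that a levelwise equivalence of diagrams induces an equivalence on homotopy limits together with the naturality of Proposition \ref{P:holimproduct}.
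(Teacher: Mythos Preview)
Your proof is correct and follows essentially the same approach as the paper: the forward direction sets $A^{i'}_k=\emptyset$ for $i'\neq i$ and invokes the Remark after \refD{Multipoly} to collapse the multi-index homotopy limit to a single-variable one, while the reverse direction combines the single-variable polynomial hypotheses with \refP{holimproduct} to assemble the iterated homotopy limit into the full one over $\mathcal{P}_0([\vec{\jmath}])$. The only cosmetic difference is that you peel off the iterated homotopy limit from the inside out, whereas the paper phrases it as building it up; these are the same argument.
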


\begin{proof}
If $F$ is polynomial of degree $j_i$ in the $i$th variable for all $i$, then for all $\vec{U}=(U_1,\ldots, U_m)$ and for all pairwise disjoint closed subsets $A^i_0,\ldots, A^i_{j_i}$, the map 
$$
F(U_1,\ldots, U_i,\ldots U_m)\longrightarrow\underset{S_i\in\mathcal{P}_0[j_i]}{\holim}\,\, F(U_1,\ldots, U_{S_i},\ldots, U_m)
$$ 
is an equivalence. It follows that 
$$
F(U_1,\ldots U_m)\longrightarrow\underset{S_1\in\mathcal{P}_0[j_1]}{\holim}\cdots \underset{S_i\in\mathcal{P}_0[j_m]}{\holim}\,\, F(U_{S_1},\ldots, U_{S_m})
$$ 
is an equivalence, and if we put $\vec{\jmath}=(j_1,\ldots, j_m)$ and repeatedly apply \refP{holimproduct}, we see that
$$
F(\vec{U})\longrightarrow\underset{\vec{S}\in\mathcal{P}_0[\vec{\jmath}]}{\holim}\,\, F(\vec{U}_{\vec{S}})
$$ 
is an equivalence as well. For the converse, suppose $F$ is polynomial of degree $\leq\vec{\jmath}=(j_1,\ldots, j_m)$. Then, in \refD{Multipoly}, for $k\neq i$, choose $A^k_{j}$ to be empty for all $j$, and let $A_0^i,\ldots, A_{j_i}^i$ be pairwise disjoing closed subsets of $U_i$. Then by the remarks following \refD{Multipoly}, the map $F(U_1,\ldots, U_i,\ldots, U_m)\to\holim_{S_i\in\mathcal{P}_0[j_i]}F(U_1,\ldots, U_{S_i},\ldots, U_m)$ is an equivalence.
\end{proof}


Now we will explore some examples so the reader gets a feel for this definition. We will also compare the multi degree of our functors to their single variable degree in the following examples. We leave it to the reader to verify that these really are polynomial of the asserted degrees.


\begin{example}
The functor $(U,V)\mapsto\Map(U\coprod V,X)\simeq\Map(U,X)\times\Map(V,X)$ is polynomial of degrees $\leq(1,1)$. This is easy to deduce from the fact that $\Map(-,X)$ takes homotopy pushout squares to homotopy pullback squares. As a functor of the single variable $U\coprod V$ it is a polynomial of degree $\leq 1$. An analog for this functor is the function $f(x,y):\R^2\rightarrow\R$ given by $f(x,y)=x+y$.
\end{example}

\begin{example} Everything from the previous example applies equally well to the functor $(U,V)\mapsto\Imm(U\coprod V,X)\simeq\Imm(U,X)\times\Imm(V,X)$. That this functor takes homotopy pushout squares to homotopy pullback squares is the content of the Smale-Hirsch Theorem. See \cite[Example 2.3]{W:EI1} for details.
\end{example}

\begin{example}
The functor $(U,V)\mapsto\Map\left({U\choose j}\coprod {V\choose k},X\right)$ is polynomial of degrees $\leq (j,k)$. Once again these are all easy to deduce from the fact that $\Map(-,X)$ takes homotopy pushout squares to homotopy pullback squares. As a functor of $U\coprod V$, it is a polynomial of degree $\leq \max\{j,k\}$. Its analog is $f(x,y)=p(x)+q(y)$, where $p(x)$ is a polynomial of degree $j$ and $q(y)$ is a polynomial of degree $k$.
\end{example}

\begin{example}
The functor $(U,V)\mapsto\Map(U\times V,X)$, where $(U,V)\in\mathcal{O}(P)\times\mathcal{O}(Q)$, is polynomial of degrees $\leq (1,1)$. Again, these are easily verified. It is polynomial of degree $\leq 2$ as a functor on $\mathcal{O}(P\coprod Q)$ (but not of degree $\leq 1$). Its analog is $f(x,y)=xy+(\mbox{lower degree terms})$.
\end{example}

\begin{example}
The functor $(U,V)\mapsto\Map\left({U\choose j}\times {V\choose k},X\right)$, where $(U,V)\in\mathcal{O}(P)\times\mathcal{O}(Q)$, is polynomial of degrees $\leq (j,k)$. This functor is polynomial of degree $\leq j+k$ as a functor on $\mathcal{O}(P\coprod Q)$, and its analog is $f(x,y)=x^jy^k+(\mbox{lower degree terms})$.
\end{example}

\begin{example}
The functor $(U,V)\mapsto\Map(U,\Emb(V,N))$ is polynomial of degree $\leq 1$ in $U$, but (as a functor of $V$ alone), $\Emb(V,N)$ is not polynomial of degree $\leq k$ for any $k$.
\end{example}



We now have a definition analogous to \refD{kStage}.

\begin{defin}\label{D:jMultiStage}  Define the \emph{$\vec{\jmath}^{th}$ degree Taylor approximation of $F$} to be
$$T_{\vec{\jmath}}F(\vec{U})=\underset{\mathcal{O}_{\vec{\jmath}}(\vec{U})}{\holim}\, \, F.$$
\end{defin}

If $\vec{U}\in\mathcal{O}_{\vec{\jmath}}(\vec{U})$, then $\vec{U}$ is a final object in $\mathcal{O}_{\vec{\jmath}}(\vec{U})$, and since $F$ is contravariant, we have that $F(\vec{U})\rightarrow T_{\vec{\jmath}}F(\vec{U})$ is an equivalence.

Note that again one has canonical maps $F\to T_{\vec{\jmath}}F$ and $T_{\vec{\jmath}}F\to T_{\vec{\jmath'}}F$ for $\vec{\jmath'}\leq\vec{\jmath}$.  Recalling the definition of the poset $\mathcal{Z}^m$ from \refS{Conventions}, we thus get the following analog of \refD{SingleTaylorTower}.

\begin{defin}\label{D:kStageMulti}
The \emph{multivariable Taylor tower (or Taylor multi-tower) of $F$} is the diagram (contravariant functor)

$$
\vec{\jmath}   \longmapsto  T_{\vec{\jmath}}F
$$

Also let 
$$
T_{\vec{\infty}}F = \underset{\vec{\jmath}\in \mathcal{Z}^m}{\holim}\, T_{\vec{\jmath}}F.
$$
Polynomial approximation $T_kF$ will also be called  the \emph{$\vec{\jmath}^{th}$ stage} of the Taylor multi-tower of $F$.
\end{defin}

Of course, since $\mathcal{Z}^m\cong\prod_{i=1}^m\mathcal{Z}$ as posets, \refP{holimproduct} implies that 

$$T_{\vec{\infty}}F\cong \underset{j_1\in \mathcal{Z}}{\holim}\,\cdots\underset{j_m\in \mathcal{Z}}{\holim}\, T_{(j_1,\ldots, j_m)}F.$$

It is precisely this multi-tower, and its finite model from \refS{FiniteModels}, which will be of most interest to us in future work, in particular in \cite{MV:Links}. We have the following generalization of Theorem \ref{T:polyclass}.

\begin{thm}\label{T:Multipolyclass}
Let $F_1\rightarrow F_2$ be a natural transformation of good functors. Suppose that $F_1$ and $F_2$ are polynomials of degree $\leq \vec{\jmath}=(j_1,\ldots, j_m)$. If $F_1(\vec{U})\rightarrow F_2(\vec{U})$ is an equivalence for all $\vec{U}\in\mathcal{O}_{\vec{\jmath}}(\vec{P})$, then it is an equivalence for all $\vec{U}\in \mathcal{O}(\vec{P})$.
\end{thm}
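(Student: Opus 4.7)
The plan is to prove the theorem by induction on $m$, the number of variables, leveraging \refP{multipolyseparate} (which says polynomial of degree $\leq\vec{\jmath}$ is equivalent to being polynomial of degree $j_i$ in each variable separately) together with the single variable case \refT{polyclass}. The base case $m=1$ is exactly \refT{polyclass}, so everything hinges on reducing the $m$-variable case to the $(m-1)$-variable case in a way that preserves the polynomial degree hypothesis.

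For the inductive step, assume the theorem for $m-1$ and suppose $F_1\to F_2$ is a natural transformation of functors on $\mathcal{O}(\vec{P})=\prod_{i=1}^m\mathcal{O}(P_i)$ polynomial of degree $\leq\vec{\jmath}$, which is an equivalence on $\mathcal{O}_{\vec{\jmath}}(\vec{P})$. Fix any $V\in\mathcal{O}_{j_m}(P_m)$, and consider the restricted natural transformation $F_1^V\to F_2^V$ of functors on $\prod_{i<m}\mathcal{O}(P_i)$ given by $F_\alpha^V(U_1,\ldots,U_{m-1})=F_\alpha(U_1,\ldots,U_{m-1},V)$. By \refP{multipolyseparate}, each $F_\alpha^V$ is polynomial of degree $\leq(j_1,\ldots,j_{m-1})$ (being polynomial of this degree in each remaining variable separately follows from $F_\alpha$ being polynomial of the full multidegree). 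Moreover, whenever $(U_1,\ldots,U_{m-1})\in\prod_{i<m}\mathcal{O}_{j_i}(P_i)$, the tuple $(U_1,\ldots,U_{m-1},V)$ lies in $\mathcal{O}_{\vec{\jmath}}(\vec{P})$, so the hypothesis gives an equivalence. By the inductive hypothesis, $F_1^V\to F_2^V$ is an equivalence on all of $\prod_{i<m}\mathcal{O}(P_i)$. Thus $F_1(U_1,\ldots,U_{m-1},V)\to F_2(U_1,\ldots,U_{m-1},V)$ is an equivalence for every $(U_1,\ldots,U_{m-1})$ and every $V\in\mathcal{O}_{j_m}(P_m)$.

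Now fix an arbitrary $(U_1,\ldots,U_{m-1})\in\prod_{i<m}\mathcal{O}(P_i)$ and consider the functors $G_\alpha\colon\mathcal{O}(P_m)\to\Spaces$ defined by $G_\alpha(V)=F_\alpha(U_1,\ldots,U_{m-1},V)$. Again by \refP{multipolyseparate}, each $G_\alpha$ is polynomial of degree $\leq j_m$, and by the previous paragraph $G_1\to G_2$ is an equivalence on all of $\mathcal{O}_{j_m}(P_m)$. Applying the single variable \refT{polyclass} gives that $G_1(U_m)\to G_2(U_m)$ is an equivalence for every $U_m\in\mathcal{O}(P_m)$, which is exactly what was needed.

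The argument is essentially formal once \refP{multipolyseparate} is in hand; there is no genuine obstacle, and in particular no need to reprove the handle-index induction from \cite{W:EI1}. The only subtle point to double-check is that the restricted functors $F_\alpha^V$ and $G_\alpha$ are still \emph{good} (they obviously take isotopy equivalences to homotopy equivalences, and the nested-union axiom restricts cleanly to each slice), so that \refP{multipolyseparate} and \refT{polyclass} genuinely apply.
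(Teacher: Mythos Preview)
Your proof is correct and follows essentially the same approach as the paper's: both argue by induction on $m$, using \refP{multipolyseparate} to slice off one variable and reduce to the single variable \refT{polyclass}. The only difference is cosmetic---the paper extends in the first variable at the outermost step while you extend in the last---and your version is more explicit about verifying that the restricted functors remain good, which the paper leaves implicit.
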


\begin{proof}
This follows from repeated application of the proof of \refT{polyclass} in \cite{W:EI1} and \refP{multipolyseparate}. Let $\vec{U}=(U_1,\ldots, U_m)$. Since the $F_i$ are polynomial of degree $\leq(j_1,\ldots, j_m)$, it follows from the proof of \refT{polyclass} that $F_1(U_1,\ldots, U_m)\to F_2(U_1,\ldots, U_m)$ is an equivalence for all $U_1\in\mathcal{O}(P_1)$, and for all $U_i\in\mathcal{O}_{j_i}(\vec{P})$. By induction on $m$ and \refP{holimproduct}, the result follows.
\end{proof}

In analogy with \refC{polyiffTk}, we have the following corollary, whose proof we omit.

\begin{prop}\label{P:multipolyiffTk}
A good functor $F$ is polynomial of degree $\leq \vec{\jmath}$ if and only if the canonical map $F(\vec{U})\to T_{\vec{\jmath}}F(\vec{U})$ is an equivalence for all $\vec{U}\in\mathcal{O}(\vec{P})$.
\end{prop}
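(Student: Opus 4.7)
The plan is to mimic the proof of \refC{polyiffTk}, using \refT{Multipolyclass} in place of \refT{polyclass} and \refP{multipolyseparate} where the reduction from the multivariable to the single-variable setting is needed.

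For the forward implication, suppose $F$ is polynomial of degree $\leq \vec{\jmath}$. First I would observe that for any $\vec{U}\in \mathcal{O}_{\vec{\jmath}}(\vec{P})$ the object $\vec{U}$ is final in $\mathcal{O}_{\vec{\jmath}}(\vec{U})$, so by contravariance of $F$ and \refD{jMultiStage} the map $F(\vec{U})\to T_{\vec{\jmath}}F(\vec{U})$ is automatically an equivalence on the full subcategory $\mathcal{O}_{\vec{\jmath}}(\vec{P})$. To invoke \refT{Multipolyclass} and conclude the same on all of $\mathcal{O}(\vec{P})$, I need to know that both source and target are polynomial of degree $\leq \vec{\jmath}$. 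The source is by hypothesis; for the target, I would use the multivariable analog of \refT{tkpoly}(1), proved by combining \refP{multipolyseparate} with the single-variable statement: writing
$$T_{\vec{\jmath}}F(\vec{U}) = \underset{\mathcal{O}_{\vec{\jmath}}(\vec{U})}{\holim}\, F \;\cong\; \underset{\mathcal{O}_{j_1}(U_1)}{\holim}\cdots \underset{\mathcal{O}_{j_m}(U_m)}{\holim}\, F$$
via \refP{holimproduct}, and freezing all but the $i$th slot, one sees that $T_{\vec{\jmath}}F$ is the application (up to equivalence) of the single-variable $T_{j_i}$ construction in the $i$th variable to a good functor, hence is polynomial of degree $\leq j_i$ in each coordinate separately. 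Then \refP{multipolyseparate} gives that $T_{\vec{\jmath}}F$ is polynomial of degree $\leq \vec{\jmath}$.

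For the reverse implication, if $F \to T_{\vec{\jmath}}F$ is an objectwise equivalence, then $F$ inherits the property of being polynomial of degree $\leq \vec{\jmath}$ from $T_{\vec{\jmath}}F$: the defining homotopy limit diagram of \refD{Multipoly} commutes (up to equivalence) with the natural transformation $F \to T_{\vec{\jmath}}F$, and since homotopy limits preserve objectwise equivalences between diagrams of fibrant simplicial sets, the polynomial condition transports from $T_{\vec{\jmath}}F$ to $F$.

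The main obstacle, exactly as in the single variable case, is the assertion that $T_{\vec{\jmath}}F$ is itself polynomial of degree $\leq \vec{\jmath}$. Replicating Weiss' original argument (open covers subordinate to the $A^i_k$, the pigeonhole principle, and \refL{posetideals}) directly in the multivariable setting is possible but combinatorially cumbersome, so I would instead route through \refP{multipolyseparate} and \refP{holimproduct} as sketched above, which reduces the question to $m$ applications of the single-variable result \refT{tkpoly}(1).
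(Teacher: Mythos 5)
Your proof is correct, and for the proposition itself it is exactly the argument the paper has in mind (the paper omits the proof, pointing to the single-variable \refC{polyiffTk}): finality of $\vec{U}$ in $\mathcal{O}_{\vec{\jmath}}(\vec{U})$ gives the equivalence on $\mathcal{O}_{\vec{\jmath}}(\vec{P})$, \refT{Multipolyclass} promotes it to all of $\mathcal{O}(\vec{P})$, and the converse follows because homotopy limits preserve objectwise equivalences. The one place you genuinely diverge is the supporting fact that $T_{\vec{\jmath}}F$ is polynomial of degree $\leq\vec{\jmath}$: the paper treats this as part (1) of \refT{MultiTkpoly}, proved as a ``straightforward adaptation'' of Weiss' open-cover/pigeonhole argument (via \refL{posetideals}), whereas you reduce it to the single-variable \refT{tkpoly}(1) by writing $T_{\vec{\jmath}}F$ as an iterated homotopy limit (\refP{holimproduct}), recognizing it in the $i$th slot as $T_{j_i}$ applied to the good functor $V_i\mapsto\holim_{V_k\in\mathcal{O}_{j_k}(U_k),\,k\neq i}F(V_1,\ldots,V_m)$, and then invoking \refP{multipolyseparate}. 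This reduction is legitimate and avoids redoing the combinatorics of Weiss' proof in the multivariable setting; the only point you should verify explicitly is that the frozen-variable functor is indeed good (isotopy invariance and the nested-union axiom follow from goodness of $F$ plus commuting homotopy limits), so that \refT{tkpoly}(1) applies. Since you prove this fact yourself rather than citing \refT{MultiTkpoly}, which appears only after the proposition, there is no circularity.
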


Now we have an analog of \refT{tkpoly}.

\begin{thm}\label{T:MultiTkpoly}\ \ 
\begin{enumerate}
\item $T_{\vec{\jmath}}F$ is polynomial of degree $\leq\vec{\jmath}$.
\item If $F$ is polynomial of degree $\leq\vec{\jmath}$, then $F\rightarrow T_{\vec{\jmath}}F$ is an equivalence.
\item If $F$ is polynomial of degree $\leq \vec{\jmath}$, then it is polynomial of degree $\leq \vec{k}$ for all $\vec{k}\geq\vec{\jmath}$.
\end{enumerate}
\end{thm}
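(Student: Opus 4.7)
The plan is to prove the three parts in the order (3), (1), (2), with Part (1) being the main work and the other two following essentially formally from it.

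Part (3) is easiest. By \refP{multipolyseparate}, $F$ polynomial of degree $\leq\vec{\jmath}$ is equivalent to $F$ being polynomial of degree $\leq j_i$ in the $i$-th variable for each $i$. Since $k_i\geq j_i$, the single-variable analog (part (3) of \refT{tkpoly}) upgrades each of these to polynomiality of degree $\leq k_i$, and \refP{multipolyseparate} applied again gives polynomiality of degree $\leq\vec{k}$.

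For Part (1), I would reduce via \refP{multipolyseparate} to showing that $T_{\vec{\jmath}}F$ is polynomial of degree $\leq j_i$ in the $i$-th variable for each $i$. Fix $i$ and let $\vec{U}'=(U_k)_{k\neq i}$ be fixed. Since $\mathcal{O}_{\vec{\jmath}}(\vec{U})=\prod_k\mathcal{O}_{j_k}(U_k)$, \refP{holimproduct} lets me rewrite
$$T_{\vec{\jmath}}F(\vec{U}) \;=\; \underset{V_i\in\mathcal{O}_{j_i}(U_i)}{\holim}\; H(V_i), \qquad H(V_i) \;:=\; \underset{\vec{V}'\in\prod_{k\neq i}\mathcal{O}_{j_k}(U_k)}{\holim}\; F(V_i,\vec{V}').$$
The crucial observation is that the indexing category of the inner homotopy limit depends only on the fixed parameters $\vec{U}'$, not on $V_i$ or $U_i$. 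Thus $H$ is a bona fide single-variable functor in $V_i$ (one must check it is good, which follows because isotopy equivalences and nested unions in $V_i$ give isotopy equivalences and nested unions of the multi-variable tuple, and homotopy limits preserve levelwise equivalences and commute with sequential homotopy limits). The displayed formula then identifies $U_i\mapsto T_{\vec{\jmath}}F(\vec{U})$ with the ordinary single-variable $T_{j_i}H$, and part (1) of \refT{tkpoly} gives that this is polynomial of degree $\leq j_i$ in $U_i$.

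Part (2) will then follow by applying \refT{Multipolyclass} to the natural transformation $F\to T_{\vec{\jmath}}F$. By hypothesis $F$ is polynomial of degree $\leq\vec{\jmath}$, and by Part (1) so is $T_{\vec{\jmath}}F$; moreover, for every $\vec{U}\in\mathcal{O}_{\vec{\jmath}}(\vec{P})$, $\vec{U}$ is a terminal object of $\mathcal{O}_{\vec{\jmath}}(\vec{U})$, so the map $F(\vec{U})\to T_{\vec{\jmath}}F(\vec{U})$ is an equivalence (as observed immediately after \refD{jMultiStage}). Therefore \refT{Multipolyclass} promotes this to an equivalence on all of $\mathcal{O}(\vec{P})$.

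The main obstacle is the bookkeeping in Part (1): one needs to carefully confirm that $H$ really is a good single-variable functor (not just a functor) so that the single-variable theorem applies, and that the \refP{holimproduct}-style rearrangement delivers $T_{j_i}H$ on the nose rather than up to some awkward reindexing. Once this is in place, Parts (2) and (3) are essentially immediate consequences of the results already established earlier in this section.
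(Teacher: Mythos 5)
Your proof is correct, but it takes a different route from the paper, which simply declares all three parts to be ``straightforward adaptations'' of the single variable proofs from \refT{tkpoly} (i.e.\ re-running Weiss's cover/pigeonhole/poset-ideal argument in the multivariable setting), and then observes that part (2) is equivalent to \refT{Multipolyclass}. Your part (2) is exactly the derivation the paper sketches in the discussion after the theorem statement, and it is non-circular since \refT{Multipolyclass} is proved independently of \refT{MultiTkpoly}. Where you genuinely diverge is in parts (1) and (3): instead of adapting the single variable proofs, you reduce to the single variable \emph{statements}, via \refP{multipolyseparate}, by fixing all but the $i$-th variable and observing through \refP{holimproduct} that $U_i\mapsto T_{\vec{\jmath}}F(\vec{U})$ is literally $T_{j_i}H$ for the auxiliary good functor $H(V_i)=\holim_{\vec{V}'\in\prod_{k\neq i}\mathcal{O}_{j_k}(U_k)}F(V_i,\vec{V}')$, whose indexing category does not depend on $V_i$. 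This buys economy and avoids redoing Weiss's cover argument in the product setting, at the cost of leaning on \refP{multipolyseparate} and on the black-boxed single variable theorem; the paper's intended adaptation is more self-contained but repetitive. Two small points you share with the paper's level of rigor but should at least flag: to invoke \refP{multipolyseparate} and \refT{Multipolyclass} you need $T_{\vec{\jmath}}F$ itself to be good (the multivariable analogue of the remark after \refD{kStage} that the $T_kF$ are good functors), and the identification $T_{\vec{\jmath}}F(\vec{U})\cong T_{j_i}H(U_i)$ must be natural in $U_i$ so that the relevant cubes of values correspond --- both are easy, and the naturality is supplied by the homeomorphisms of \refP{holimproduct}.
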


These are straightforward adaptations of the proofs of the corresponding statements from \refT{tkpoly}. Note that (2) is equivalent to \refT{Multipolyclass}, which can be seen by considering the following commutative diagram.

$$\xymatrix{
F_1  \ar[r]\ar[d] & F_2 \ar[d] \\
T_{\vec{\jmath}} F_1\ar[r] &T_{\vec{\jmath}}F_2.\\
}
$$
On the one hand, by \refT{MultiTkpoly} (2), If each of the $F_i$ are polynomial of degree $\leq \vec{\jmath}$, then the vertical arrows are equivalences. If the natural transformation $F_1\to F_2$ is an equivalence for all $\vec{U}\in\mathcal{O}_{\vec{\jmath}}(\vec{P})$, then the bottom horizontal arrow is an equivalence because the homotopy limit in question is taken over all such $\vec{U}$, and homotopy limits preserve weak equivalences. Hence the top horizontal arrow is an equivalence for all $\vec{V}\in\mathcal{O}(\vec{P})$. On the other hand, \refT{Multipolyclass} implies the vertical arrows in the above diagram are equivalences immediately, because $F\to T_{\vec{\jmath}}F$ is a natural transformation between polynomials of degree $\leq \vec{\jmath}$ which is an equivalence for all $\vec{U}\in\mathcal{O}_{\vec{\jmath}}(\vec{P})$, which is precisely what \refT{MultiTkpoly} claims.

In analogy to Proposition \ref{P:factorthru}, we have the following result.

\begin{prop}
Let $F,G:\mathcal{O}(\vec{P})\rightarrow\Spaces$ be good functors. Suppose $G$ is a polynomial of degree $\leq\vec{\jmath}$. Then a natural transformation $F\rightarrow G$ factors through $T_{\vec{\jmath}}F$ up to some form of weak equivalence.
\end{prop}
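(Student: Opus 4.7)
The plan is to imitate the proof of \refP{factorthru} verbatim, replacing the integer $k$ with the multi-index $\vec{\jmath}$ and invoking \refT{MultiTkpoly} instead of \refT{tkpoly}. Explicitly, given a natural transformation $\eta\colon F\to G$, apply the functor $T_{\vec{\jmath}}(-)$ to obtain a natural transformation $T_{\vec{\jmath}}\eta\colon T_{\vec{\jmath}}F\to T_{\vec{\jmath}}G$. This is well-defined and natural because $T_{\vec{\jmath}}F(\vec{U})=\holim_{\mathcal{O}_{\vec{\jmath}}(\vec{U})}F$ is functorial in $F$.

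Next, I would use the canonical natural transformation $\iota_G\colon G\to T_{\vec{\jmath}}G$. By \refT{MultiTkpoly}(2), since $G$ is polynomial of degree $\leq\vec{\jmath}$, the map $\iota_G(\vec{U})$ is a weak equivalence for every $\vec{U}\in\mathcal{O}(\vec{P})$. Picking (functorially up to homotopy) a homotopy inverse $\iota_G^{-1}\colon T_{\vec{\jmath}}G\to G$ and forming the composite
$$
F\xrightarrow{\iota_F} T_{\vec{\jmath}}F\xrightarrow{T_{\vec{\jmath}}\eta} T_{\vec{\jmath}}G\xrightarrow{\iota_G^{-1}} G,
$$
one obtains a factorization of $\eta$ through $T_{\vec{\jmath}}F$. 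Naturality of $\iota$ gives the square
$$
\xymatrix{
F \ar[r]^{\iota_F}\ar[d]_{\eta} & T_{\vec{\jmath}}F \ar[d]^{T_{\vec{\jmath}}\eta}\\
G \ar[r]^{\iota_G} & T_{\vec{\jmath}}G\\
}
$$
which commutes strictly, and applying $\iota_G^{-1}$ shows that the above composite is weakly equivalent to $\eta$; this is the ``up to weak equivalence'' qualification in the statement.

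There is essentially no obstacle: the argument is a purely formal consequence of the existence of a natural transformation $F\to T_{\vec{\jmath}}F$, the functoriality of $T_{\vec{\jmath}}$, and \refT{MultiTkpoly}(2). The only minor subtlety is keeping track of the fact that a pointwise homotopy inverse to $\iota_G$ need not be a strict natural transformation, which is exactly why the factorization is only asserted up to weak equivalence rather than on the nose; if one works in a model-categorical framework with a functorial fibrant replacement this can be promoted to a strict factorization, but in the present setting the weak version stated is sufficient.
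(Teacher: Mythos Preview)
Your proposal is correct and follows essentially the same approach as the paper's proof: apply $T_{\vec{\jmath}}$ to the given transformation $F\to G$ to get $T_{\vec{\jmath}}F\to T_{\vec{\jmath}}G$, and then compose with a homotopy inverse to the equivalence $G\to T_{\vec{\jmath}}G$ supplied by \refT{MultiTkpoly}(2). The paper's argument is slightly terser but identical in substance; your added commutative square and remarks about the homotopy inverse not being a strict natural transformation are helpful elaborations rather than a different method.
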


\begin{proof}
Since $G$ is polynomial of degree $\leq\vec{\jmath}$, there is a homotopy equivalence $G\rightarrow T_{\vec{\jmath}}G$. A natural transformation $F\rightarrow G$ gives rise to a natural transformation $T_{\vec{\jmath}}F\rightarrow T_{\vec{\jmath}}G$, and, using a homotopy inverse of $G\rightarrow T_{\vec{\jmath}}G$, we obtain the desired result.
\end{proof}

Finally, we also have an analog of \refP{iteratedT}.  This will be used in the proof of \refP{multilayerhomog}.

\begin{prop}\label{P:iteratedvecT}
There is an equivalence of functors $$T_{\vec{\jmath}}T_{\vec{k}}F\simeq T_{\min\{\vec{\jmath},\vec{k}\}}F\simeq T_{\vec{k}}T_{\vec{\jmath}}F,$$ where $\min\{\vec{\jmath},\vec{k}\}=(\min\{j_1,k_1\},\min\{j_2,k_2\},\ldots, \min\{j_m,k_m\})$.
\end{prop}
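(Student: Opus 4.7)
The plan is to reduce the proposition to the single-variable version \refP{iteratedT} by factoring each multivariable Taylor approximation as an iterated composition of one-variable Taylor approximations, one in each coordinate. For a functor $F\colon\mathcal{O}(\vec{P})\to\Spaces$, let $T^{(i)}_{a}F$ denote the single-variable Taylor approximation in the $i$-th coordinate,
$$T^{(i)}_{a}F(\vec{U}) \;=\; \holim_{V\in\mathcal{O}_a(U_i)} F(U_1,\ldots, V,\ldots, U_m),$$
with $V$ placed in the $i$-th slot and the other $U_\ell$ held fixed. Because $\mathcal{O}_{\vec{\jmath}}(\vec{U})=\prod_i\mathcal{O}_{j_i}(U_i)$, iterated application of \refP{holimproduct} gives a natural equivalence
$$T_{\vec{\jmath}}F\;\simeq\;T^{(1)}_{j_1}T^{(2)}_{j_2}\cdots T^{(m)}_{j_m}F,$$
so both $T_{\vec{\jmath}}$ and $T_{\vec{k}}$ decompose into coordinate-wise single-variable operators.

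Next, I would note that \refP{holimproduct} also implies that $T^{(i)}_{a}$ and $T^{(\ell)}_{b}$ commute whenever $i\neq\ell$, since they are homotopy limits over independent categories acting on disjoint slots. Applying this to
$$T_{\vec{\jmath}}T_{\vec{k}}F\;\simeq\;T^{(1)}_{j_1}\cdots T^{(m)}_{j_m}T^{(1)}_{k_1}\cdots T^{(m)}_{k_m}F,$$
I can reorder so that the two operators acting in each coordinate appear adjacently, yielding
$$T_{\vec{\jmath}}T_{\vec{k}}F\;\simeq\;\bigl(T^{(1)}_{j_1}T^{(1)}_{k_1}\bigr)\bigl(T^{(2)}_{j_2}T^{(2)}_{k_2}\bigr)\cdots\bigl(T^{(m)}_{j_m}T^{(m)}_{k_m}\bigr)F.$$

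For each $i$, the composition $T^{(i)}_{j_i}T^{(i)}_{k_i}$ is, after freezing the remaining $m-1$ variables, precisely the iterated single-variable Taylor construction of \refP{iteratedT} applied to $F$ regarded as a functor of the $i$-th variable alone; hence $T^{(i)}_{j_i}T^{(i)}_{k_i}F\simeq T^{(i)}_{\min\{j_i,k_i\}}F$. Composing these equivalences over all coordinates and reassembling via the decomposition of the first paragraph yields $T_{\vec{\jmath}}T_{\vec{k}}F\simeq T_{\min\{\vec{\jmath},\vec{k}\}}F$, and the symmetric equivalence $T_{\vec{k}}T_{\vec{\jmath}}F\simeq T_{\min\{\vec{\jmath},\vec{k}\}}F$ follows by swapping the roles of $\vec{\jmath}$ and $\vec{k}$. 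The main bookkeeping point is checking that each $T^{(i)}_{a}$ is genuinely natural in the other $m-1$ variables so that the commutation arguments via \refP{holimproduct} apply, but this is automatic because each single-variable homotopy limit is taken over a category depending only on $U_i$.
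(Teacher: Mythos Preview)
Your proof is correct and follows essentially the same strategy as the paper's: both use \refP{holimproduct} to decompose $T_{\vec{\jmath}}$ and $T_{\vec{k}}$ into iterated single-variable homotopy limits, commute the factors acting on different coordinates, and then invoke \refP{iteratedT} coordinatewise. Your write-up is slightly more explicit in introducing the notation $T^{(i)}_a$ and spelling out the reordering, but the underlying argument is identical.
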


\begin{proof}
Let $\vec{U}=(U_1,\ldots, U_m)\in\mathcal{O}$.

By definition,
$$
T_{\vec{\jmath}}T_{\vec{k}}F(\vec{U})=\underset{\vec{V}\in\mathcal{O}_{\vec{\jmath}}(\vec{U})}{\holim}\, \, 
T_{\vec{k}}F(\vec{V}).
$$
By \refP{holimproduct}, we may write
$$
\underset{\vec{V}\in\mathcal{O}_{\vec{\jmath}}(\vec{U})}{\holim}\, \, 
T_{\vec{k}}F(\vec{V})\cong\underset{V_1\in\mathcal{O}_{j_1}(U_1)}{\holim}\,  \cdots \underset{V_m\in\mathcal{O}_{j_m}(U_m)}{\holim}\, 
T_{\vec{k}}F(V_1,\ldots, V_m).
$$
Applying this result again allows us to rewrite the above as
$$
\underset{V_1\in\mathcal{O}_{j_1}(U_1)}{\holim}\,  \cdots \underset{V_m\in\mathcal{O}_{j_m}(U_m)}{\holim}\, \underset{W_1\in\mathcal{O}_{k_1}(V_1)}{\holim}\,  \cdots \underset{W_m\in\mathcal{O}_{k_m}(V_m)}{\holim}\, 
F(W_1,\ldots, W_m).
$$

Now use the fact that homotopy limits commute and \refP{iteratedT} to arrive at the desired result.
\end{proof}

\subsection{Convergence of the multivariable Taylor tower for the embedding functor}\label{S:MultiConvergence}

Recall that Theorem \ref{T:gwemb} says that when $n-m-2>0$, the natural map 
$$\Emb(M,N)\rightarrow\underset{k}{\holim}\, T_k\Emb(M,N)$$ is an equivalence.  Here is an immediate generalization.



\begin{thm}\label{T:MultiConvergence}
Let $M=\coprod_{i=1}^{m}P_i$, $\vec{P}=(P_1,\ldots, P_m)$, where $\dim(P_i)=p_i$ and suppose $n-p_i-2>0$ for all $i$.  Then the map 
$$\Emb(\vec{P},N)\longrightarrow T_{\vec{\infty}}\Emb(\vec{P},N)$$
is an equivalence.
\end{thm}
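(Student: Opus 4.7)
The plan is to reduce the statement to the single-variable Goodwillie--Klein convergence result (\refT{gwemb}) by sandwiching the multivariable approximations between single-variable ones. Writing $F = \Emb(-, N)$ and $M = \coprod_i P_i$, I first observe that under the equivalence $\mathcal{O}(M) \cong \prod_i \mathcal{O}(P_i)$, an element of the single-variable subcategory $\mathcal{O}_k(M)$ corresponds to a tuple $\vec{U} = (U_1, \ldots, U_m)$ whose \emph{total} number of open-ball components is at most $k$, while $\mathcal{O}_{(k,\ldots,k)}(\vec{P}) = \prod_i \mathcal{O}_k(P_i)$ consists of tuples in which each $U_i$ individually has at most $k$ components. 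This gives inclusions of subcategories
$$
\mathcal{O}_k(M) \subseteq \mathcal{O}_{(k,k,\ldots,k)}(\vec{P}) \subseteq \mathcal{O}_{mk}(M),
$$
and hence, by restriction of homotopy limits, natural maps of good functors $T_{mk} F \to T_{(k,\ldots,k)} F \to T_k F$.

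Next, I would apply the refinement of \refT{gwemb} indicated in the text following that theorem: the disjoint union $M$ admits a handlebody decomposition with handles of index at most $l = \max_i p_i$, and our hypothesis $n - p_i - 2 > 0$ for all $i$ is equivalent to $n - l - 2 > 0$. Hence the single-variable Taylor tower converges, giving $\Emb(\vec{P}, N) \simeq \holim_k T_k F$. On the other hand, the diagonal subposet $\{(k,\ldots,k) : k \geq 0\}$ is final in $\mathcal{Z}^m$, since for any $\vec{\jmath}$ the relevant slice $\{(k,\ldots,k) : k \geq \max_i j_i\}$ is a contractible chain. By cofinality of homotopy limits one therefore has $T_{\vec{\infty}} F \simeq \holim_k T_{(k,\ldots,k)} F$.

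The remaining step is to identify these two single-indexed inverse homotopy limits. Splicing the sandwich maps for the powers $k = 1, m, m^2, \ldots$ produces a combined tower
$$
\cdots \to T_{m^{i+1}} F \to T_{(m^i, \ldots, m^i)} F \to T_{m^i} F \to \cdots \to T_m F \to T_{(1,\ldots,1)} F \to T_1 F
$$
in which both $\{T_{m^i} F\}_i$ and $\{T_{(m^i, \ldots, m^i)} F\}_i$ appear as cofinal subsequences (each is a cofinal subsequence of the full corresponding tower as well). Their inverse homotopy limits therefore agree with the homotopy limit of the combined tower, yielding $\holim_k T_k F \simeq \holim_k T_{(k,\ldots,k)} F$. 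Chaining the three equivalences above gives $\Emb(\vec{P}, N) \simeq T_{\vec{\infty}} F$, as desired.

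The main obstacle I anticipate is bookkeeping: verifying that the sandwich maps assemble into the interlaced tower as a genuinely commutative diagram. This should be essentially immediate once one checks that each arrow is induced by restriction of homotopy limits along an inclusion of subcategories of $\mathcal{O}(M)$, and that these inclusions are compatible with the standard structure maps $T_{k+1}F \to T_k F$ and $T_{\vec{\jmath}'} F \to T_{\vec{\jmath}} F$ (for $\vec{\jmath} \leq \vec{\jmath}'$) arising from the evident containments of categories of open sets. A minor secondary point is producing a handlebody decomposition for $M$ adapted to a disjoint union, which amounts to taking the union of handlebody decompositions of the individual $P_i$.
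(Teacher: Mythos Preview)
Your proof is correct and takes a genuinely different route from the paper's. The paper argues by induction on $m$, using the restriction fibration
\[
\Emb(P_1,N-P_2)\longrightarrow\Emb(P_1\amalg P_2,N)\longrightarrow\Emb(P_2,N),
\]
applying the single-variable approximations $\partial^iT_{j_i}$ one variable at a time, and invoking the Goodwillie--Klein connectivity estimates for each fiber and base separately. This yields a slightly stronger conclusion (convergence in each variable separately), which the paper notes in a remark. The paper also observes that the theorem would follow formally from \refT{twotowers}, whose proof however relies on the classification of homogeneous functors and a choice of basepoint.

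Your sandwich argument via the inclusions $\mathcal{O}_k(M)\subset\mathcal{O}_{(k,\ldots,k)}(\vec{P})\subset\mathcal{O}_{mk}(M)$ and the interlaced tower is more elementary than either route: it uses neither the fibration sequence particular to embeddings nor the homogeneous classification, only the single-variable convergence for $M$ as a whole and straightforward cofinality. The trade-off is that you do not recover the variable-by-variable convergence that the paper's fibration argument produces. One small point worth making explicit when you write it up: when the $P_i$ have different dimensions you are invoking the handle-index refinement of \refT{gwemb} with $l=\max_ip_i$, not the theorem as literally stated for a manifold of a single dimension; this is exactly what the paragraph following \refT{gwemb} licenses.
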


\begin{proof}
First let us fix some notation, since we will be looking at the single variable Taylor approximations in each variable separately. For a functor $F:\mathcal{O}(\vec{P})\rightarrow\Spaces$ and $\vec{U}=(U_1,\ldots, U_m)\in\mathcal{O}(\vec{P})$, let $\del^{i} T_{j_i}F(\vec{U})=\holim_{\mathcal{O}_{j_i}(U_{i})}F(\vec{U})$. We will consider the case $m=2$; the general result will follow by induction.

For $M=P_1\coprod P_2$, we have a fibration sequence 
 $$\Emb(P_1,N-P_2)\longrightarrow\Emb(P_1\coprod P_2,N)\longrightarrow\Emb(P_2,N).$$

Applying $\del^2T_{j_2}$ everywhere yields a fibration sequence

$$\del^2T_{j_2}\Emb(P_1,N-P_2)\longrightarrow\del^2T_{j_2}\Emb(P_1\coprod P_2,N)\longrightarrow\del^2T_{j_2}\Emb(P_2,N).$$

The map $$\Emb(P_2,N)\longrightarrow\del^2T_{j_2}\Emb(P_2,N)$$ is $(j_2(n-p_2-2)+1-p_2)$-connected by \refT{gwemb}. The map $$\Emb(P_1,N-P_2)\longrightarrow\del^2T_{j_2}\Emb(P_1,N-P_2)$$ is $((j_2+1)(n-p_2-2)+1-p_1)$-connected by work of Goodwillie-Klein \cite{GK}; indeed, such connectivity estimates are part of what go into the proof of \refT{gwemb}. In particular, as functors of $P_2$, the towers for $\Emb(P_2,N)$ and $\Emb(P_1,N-P_2)$ converge since $n-p_2-2>0$. Thus, taking homotopy limits over $j_2$ gives a fibration sequence

$$\Emb(P_1,N-P_2)\longrightarrow\underset{j_2}{\holim}\;\del^2T_{j_2}\Emb(P_1\coprod P_2,N)\longrightarrow\Emb(P_2,N).$$

Now apply $\del^1T_{j_1}$ everywhere to obtain another fibration sequence

$$\del^1T_{j_1}\Emb(P_1,N-P_2)\longrightarrow\del^1T_{j_1}\underset{j_2}{\holim}\;\del^2T_{j_2}\Emb(P_1\coprod P_2,N)\longrightarrow\del^1T_{j_1}\Emb(P_2,N).$$

The functor $\Emb(P_2,N)$ does not depend on $P_1$, so $\del^1T_{j_1}\Emb(P_2,N)\simeq\Emb(P_2,N)$. Once again \refT{gwemb} gives us a $(j_1(n-p_1-2)+1-p_1)$-connected map $\Emb(P_1,N-P_2)\rightarrow \del^1T_{j_1}\Emb(P_2,N-P_2)$, and thus an equivalence $$\Emb(P_1,N-P_2)\longrightarrow\holim_{j_1}\del^1T_{j_1}\Emb(P_1,N-P_2)$$ since $n-p_1-2>0$. Putting this all together gives a fibration sequence

$$\Emb(P_1,N-P_2)\longrightarrow\underset{j_1}{\holim}\;\del^1T_{j_1}\underset{j_2}{\holim}\;\del^2T_{j_2}\Emb(P_1\coprod P_2,N)\longrightarrow\Emb(P_2,N).$$

Thus by \refP{holimproduct}, and noting that $\del^1T_{j_1}\del^2T_{j_2}=T_{(j_1,j_2)}$ (again by \refP{holimproduct}), we have an equivalence

$$\Emb(P_1\coprod P_2,N)\rightarrow\underset{(j_1,j_2)}{\holim}\;T_{(j_1,j_2)}\Emb(P_1\coprod P_2,N).$$

The general result follows by induction on $m$.

\end{proof}

\begin{rem}
\refT{MultiConvergence} follows from \refT{twotowers}, but this proof gives a slightly stronger result. In particular, we produced an equivalence $$\Emb(P_1\coprod P_2,N)\longrightarrow\del^2T_{\infty}\Emb(P_1\coprod P_2,N).$$ That is, the tower converges in each variable separately.
\end{rem}

\section{Multivariable homogeneous functors}\label{S:multihomog}


Recall that \refT{homogclass} classifies homogeneous functors in the single variable setting.  The goal of this section is to generalize this result to the multivariable case; this is done in \refT{multihomogclass}. We will begin by letting \refT{homogclass} tell us what it can about a possible classification of multivariable homogeneous functors. The main result of this analysis is \refP{homogdisjoint} in \refS{multihomogdef} as well as making sense of what goes into defining the codomain of this equivalence. \refS{multihomogdef} is both meant to motivate our definition of a homogeneous functor in the multivariable setting (\refD{MultiHomogeneous}) and to help set up for the statement and proof of the relationship between the single variable and multivariable towers in \refT{twotowers}. After defining homogeneous functors, we begin by exploring examples using spaces of sections in \refS{Sections}. These are important because every homogeneous functor can be constructed from a space-of-sections functor; this is the content of our Classification \refT{multihomogclass}. We will use the classification of homogeneous functors to prove \refT{twotowers} which relates the single and multivariable towers. 

Before we embark on our study  of homogeneous functors, it may help the reader to consider the following example from ordinary calculus.

\begin{example}\label{Ex:MultiPoly}
For an ordinary polynomial $p(x)=a_0+a_1x+\cdots+a_mx^m$, the $n^{th}$ degree approximation is just the truncation $p_n(x)=a_0+a_1x+\cdots+a_nx^n$, and the homogeneous degree $n$ part is $a_nx^n$. The degree $n-1$ approximation is again the truncation $p_{n-1}(x)=a_0+a_1x+\cdots+a_{n-1}x^{n-1}$, and we can think of obtaining the homogeneous degree $n$ part by taking the difference $h_n(x)=a_nx^n=p_n(x)-p_{n-1}(x)$. The analog of difference for us is of course the homotopy fiber. For a polynomial in two variables, 
$$
p_{n,k}(x,y)=a_{0,0}+a_{1,0}x+a_{0,1}y+\cdots + a_{n,k}x^ny^k,
$$ 
(where the subscript $(n,k)$ indicates that this is the term of highest bidegree), the homogeneous bidegree $(n,k)$ part is $a_{n,k}x^ny^k$. This can be obtained by subtracting all of the truncated polynomials $p_{i,j}(x,y)$, where $(i,j)=(n-1,k),(n,k-1),(n-1,k-1)$ from $p_{n,k}(x,y)$ ``with signs''. The formula for obtaining the homogeneous bidegree $(n,k)$ part of $p_{n,k}(x,y)$ is

$$h_{n,k}(x,y)=p_{n,k}(x,y)-p_{n-1,k}(x,y)-p_{n,k-1}(x,y)+p_{n-1,k-1}(x,y).$$


More generally, for a polynomial in $m$ variables of degree $\vec{\jmath}=(j_1,\ldots, j_m)$, $p_{\vec{\jmath}}=a_{\vec{0}}+\cdots+a_{\vec{\jmath}}\vec{x}^{\vec{\jmath}}$ (here $\vec{x}^{\vec{\jmath}}=x_1^{j_1}\cdots x_m^{j_m})$, we can obtain the homogeneous degree $\vec{\jmath}$ part with a similar iterated difference as follows. For a subset $R\subset\underline{m}$, let $\vec{\jmath}_R=(j_{1,R},\ldots, j_{m,R})$ be defined by

\begin{equation*}
j_{i,R}= 
\begin{cases}
j_i &\text{if $ i\notin R$;}\\
j_i-1, &\text{if $i\in R$.}
\end{cases}
\end{equation*}

Then the homogeneous multidegree $\vec{\jmath}$ part of $p_{\vec{\jmath}}(\vec{x})$ is 

$$h_{\vec{\jmath}}(\vec{x})=a_{\vec{\jmath}}x^{\vec{\jmath}}=p_{\vec{\jmath}}(\vec{x})-\sum_{R\neq\emptyset}(-1)^{|R|-1}p_{\vec{\jmath}_R}(\vec{x}).$$

The analog of such an expression is the total homotopy fiber, which can be thought of as an iterated homotopy fiber, or an ``iterated difference''. Our \refP{mcubehomog} will exhibit multivariable homogeneous functors as just such a total homotopy fiber.
\end{example}


\subsection{Extracting multivariable information from the single variable tower}\label{S:multihomogdef}


We will begin by seeing what the single variable classification tells us in the case where $M=\coprod_iP_i$. Let $F\colon \mathcal{O}(M)\rightarrow\Spaces$ be a good functor, and choose a basepoint for $F(M)$. Theorem \ref{T:homogclass} says that the functor $$L_kF=\hofiber(T_kF\longrightarrow T_{k-1}F)$$ maps by an equivalence to the space of compactly supported sections of a fibration $Z\rightarrow {M\choose k}$. The following is an easy consequence of that statement, and will help motivate our definition of a homogeneous functor.



\begin{prop}\label{P:homogdisjoint}
If $M=\coprod_{i=1}^mP_i$, then the section spaces of the classifying fibration for $L_kF$ break up into a product of section spaces. That is, 

$$L_kF\simeq\Gamma^c\left({U\choose k},Z;p\right)=\prod_{|\vec{\jmath}|=k}\Gamma^c\left({\vec{U}\choose \vec{\jmath}},Z;p\right),$$

where $\vec{U}=(U_1,\ldots, U_m)$ and $U_i=U\cap P_i$.
\end{prop}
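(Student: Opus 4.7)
My plan is to invoke the classification theorem \refT{homogclass} and then show that, when $U$ is a disjoint union, the classifying section space splits as a product indexed by the ways of distributing $k$ points among the components. The argument is essentially bookkeeping: unwind the definitions of the configuration space, the symmetric product, and the fat diagonal for a disjoint union, and show the section spaces respect this decomposition.

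First I would establish the space-level decomposition. Starting from $U^k = \left(\coprod_i U_i\right)^k$ and grouping the summands of $U^k = \coprod_{f\colon\underline{k}\to\underline{m}}\prod_j U_{f(j)}$ according to their $\Sigma_k$-orbit, one gets
$$
sp_k U \;=\; \coprod_{|\vec{\jmath}|=k} sp_{\vec{\jmath}}\vec{U},
$$
where $\vec{\jmath}=(j_1,\dots,j_m)$ records the distribution of points among the $U_i$. Because points lying in distinct $U_i$ are automatically unequal, the intersection of $\Delta_{fat}(U^k)/\Sigma_k$ with the summand $sp_{\vec{\jmath}}\vec{U}$ is precisely the image of $\Delta_{\vec{fat}}(\vec{U}^{\vec{\jmath}})$. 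Taking complements yields
$$
{U\choose k} \;=\; \coprod_{|\vec{\jmath}|=k}{\vec{U}\choose \vec{\jmath}}.
$$

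Second, I would transfer this decomposition to sections. A section of a fibration over a disjoint union is the same as an ordered tuple of sections over the components, so writing $Z_{\vec{\jmath}} = p^{-1}\bigl({\vec{U}\choose \vec{\jmath}}\bigr)$ gives
$$
\Gamma\!\left({U\choose k},Z;p\right) \;=\; \prod_{|\vec{\jmath}|=k}\Gamma\!\left({\vec{U}\choose \vec{\jmath}},Z_{\vec{\jmath}};p_{\vec{\jmath}}\right).
$$
For the compactly supported sections, I would check that the homotopy colimit defining $\Gamma(\partial\cdot)$ in \refE{bdysections} respects the decomposition: neighborhoods $Q$ of $\Delta_{fat}$ in $sp_k U$ which are themselves disjoint unions of neighborhoods of $\Delta_{\vec{fat}}$ in each $sp_{\vec{\jmath}}\vec{U}$ form a cofinal subcategory of $\mathcal{N}$. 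Thus the boundary section space also factors as a product, and since finite products of maps have total homotopy fiber equal to the product of the homotopy fibers, $\Gamma^c$ splits across $\vec{\jmath}$ as well.

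Combining these with \refT{homogclass} for $L_kF$ delivers the asserted decomposition. The only mildly subtle step is the cofinality claim for neighborhoods of the fat diagonal, since a general neighborhood of $\Delta_{fat}$ in $sp_k U$ need not a priori split; but any such neighborhood contains a product-type neighborhood obtained by taking products of neighborhoods of $\Delta_{fat}(sp_{j_i}U_i)$ in each factor, which is enough to make the homotopy colimit factor. Everything else is a direct rewriting of definitions.
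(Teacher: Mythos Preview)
Your proposal is correct and follows essentially the same route as the paper: decompose ${U\choose k}$ as $\coprod_{|\vec{\jmath}|=k}{\vec{U}\choose\vec{\jmath}}$ and use that $\Gamma(-,Z;p)$ turns coproducts into products, with the paper deferring your step about $\Gamma^c$ splitting to the discussion immediately following the proposition (culminating in \refD{homogdegreej}). One small remark: your cofinality worry is unnecessary, since $sp_kU=\coprod_{|\vec{\jmath}|=k}sp_{\vec{\jmath}}\vec{U}$ is a genuine disjoint union of open sets, so \emph{every} neighborhood of $\Delta_{fat}$ already splits and the paper in fact records an isomorphism $\mathcal{N}\cong\prod_{|\vec{\jmath}|=k}\mathcal{N}_{\vec{\jmath}}$ rather than merely a cofinal inclusion.
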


\begin{proof}
When $M=\coprod_{i=1}^mP_i$, we have that $${M\choose k}=\coprod_{\abs{\vec{\jmath}}=k}{\vec{P}\choose \vec{\jmath}}.$$  The result then follows by noting that a section of a fibration $p:Z\rightarrow X\coprod Y$ is a pair of sections, one defined over $X$ and the other over $Y$. That is, the functor $\Gamma(-,Z;p)$ turns coproducts into products.
\end{proof}

The reader may protest that we have not defined $\Gamma^c\left({\vec{U}\choose \vec{\jmath}},Z;p\right)$. This is \refD{homogdegreej}, and we will devote the remainder of this section to explaining what goes into this definition. Recall that

$$\Gamma^c\left({U\choose k},Z;p\right)=\hofiber\left(\Gamma\left({U\choose k},Z;p\right)\rightarrow\Gamma\left(\del{U\choose k},Z;p\right)\right),$$

and recall from \refE{bdysections} that

\begin{equation}\label{E:bdysections2}
\Gamma\left(\del{U\choose k},Z;p\right)=\underset{Q\in\mathcal{N}}{\hocolim}\;\Gamma\left({U\choose k}\cap Q,Z;p\right).
\end{equation}

Also recall that $\mathcal{N}$ is the poset of neighborhoods $Q$ of $\Delta_{fat}$ in $sp_kU$. There is an isomorphism of categories $\mathcal{N}\cong\prod_{|\vec{\jmath}|=k}\mathcal{N}_{\vec{\jmath}}$, where $\mathcal{N}_{\vec{\jmath}}$ is the poset of neighborhoods $Q_{\vec{\jmath}}$ of $\Delta_{\vec{fat}}$ in $sp_{\vec{\jmath}}\, \vec{P}$. Every element $Q\in\mathcal{N}_{\vec{\jmath}}$ can be uniquely expressed as a disjoint union $\coprod_{|\vec{\jmath}|=k}Q_{\vec{\jmath}}$, where $Q_{\vec{\jmath}}\in\mathcal{N}_{\vec{\jmath}}$.

Hence, expanding upon \refE{bdysections2}, writing $\vec{Q}$ for the element of $\prod_{|\vec{\jmath}|=k}\mathcal{N}_{\vec{\jmath}}$ corresponding to $Q=\coprod_{|\vec{\jmath}|=k}Q_{\vec{\jmath}}\in\mathcal{N}$, and with $U=\coprod_i U_i$, we have

\begin{eqnarray*}
\underset{Q\in\mathcal{N}}{\hocolim}\;\Gamma\left({U\choose k}\cap Q,Z;p\right)&\simeq&\underset{\vec{Q}\in\prod_{|\vec{\jmath}|=k}\mathcal{N}_{\vec{\jmath}}}{\hocolim}\;\Gamma\left({U\choose k}\cap \coprod_{|\vec{\jmath}|=k}Q_{\vec{\jmath}},Z;p\right)\\
&\simeq& \underset{\vec{Q}\in\prod_{|\vec{\jmath}|=k}\mathcal{N}_{\vec{\jmath}}}{\hocolim}\;\underset{|\vec{\jmath}|=k}{\prod}\Gamma\left({\vec{U}\choose \vec{\jmath}}\cap Q_{\vec{\jmath}},Z;p\right)\\
&\simeq&\underset{|\vec{\jmath}|=k}{\prod}\underset{Q_{\vec{\jmath}\in\mathcal{N}_{\vec{\jmath}}}}{\hocolim}\,\Gamma\left({\vec{U}\choose \vec{\jmath}}\cap Q_{\vec{\jmath}},Z;p\right).\\
\end{eqnarray*}

The second equivalence comes from the fact that $\Gamma(-)$ takes coproducts to products (more generally, homotopy colimits to homotopy limits), and that ${U\choose k}=\coprod_{|\vec{\jmath}|=k}{\vec{U}\choose\vec{\jmath}}$. The last equivalence uses two facts. The first is that a homotopy colimit over a product of categories can be written as an iterated homotopy colimit, a fact analogous to \refP{holimproduct}. The second is that finite homotopy limits commute with filtered homotopy colimits, and in this case, each homotopy colimit in our iterated series is filtered. See \cite[Sublemma 7.4]{W:EI1} for details.

\begin{equation}\label{E:multisections1}
\Gamma^c\left({\vec{U}\choose \vec{\jmath}},Z;p\right)=\hofiber\left(\Gamma\left({\vec{U}\choose \vec{\jmath}},Z;p\right)\rightarrow\underset{Q_{\vec{\jmath}\in\mathcal{N}_{\vec{\jmath}}}}{\hocolim}\,\Gamma\left({\vec{U}\choose {\vec{\jmath}}}\cap Q_{\vec{\jmath}},Z;p\right)\right).
\end{equation}

We can take this analysis a bit further and continue to decompose the homotopy colimit which appears in \refE{multisections1}. In order to do this, we need to recall the decomposition $\Delta_{\vec{fat}}(sp_{\vec{\jmath}}\,\vec{U})=\cup_{i=1}^m\Delta_{fat_i}(sp_{\vec{\jmath}}\,\vec{U})$. Also recall that 

$$\Delta_{fat_i}(sp_{\vec{\jmath}}\,\vec{U})=sp_{j_1}\,U_1\times\cdots\times sp_{j_{i-1}}\,U_{i-1}\times \Delta_{fat}(sp_{j_{i}}\,U_{i})\times sp_{j_{i+1}}\,U_{i+1}\times\cdots\times sp_{j_m}\,U_m.$$

For the remainder of this section we will work with a fixed $\vec{\jmath}$ to avoid cumbersome notation whenever possible. The discussion about fat diagonals just above motivates the following definition.

\begin{defin}\label{D:nbhdfat}
Let $R\subset\underline{m}$ and let $N_i$ be an open neighborhood of $\Delta_{fat}$ in $sp_{j_i}P_i$. Define a poset $\mathcal{N}_R$ whose objects are neighborhoods $Q=Q_1\times\cdots\times Q_m$, where 
\begin{equation*}
Q_i= 
\begin{cases}
{P_i\choose j_i}, &\text{if $ i\notin R$;}\\
N_i\cap{P_i\choose j_i}, &\text{if $i\in R$.}
\end{cases}
\end{equation*}
\end{defin}

Each $Q_{\vec{\jmath}}$ can be uniquely expressed as a union $Q_{\vec{\jmath}}=\cup_{i=1}^mQ_i$, where $Q_i\in\mathcal{N}_{\{i\}}$, and thus we have an isomorphism of categories $\mathcal{N}_{\vec{\jmath}}\cong\prod_{i=1}^m\mathcal{N}_{\{i\}}$.


The following proposition rewrites the target of the map in \refE{multisections1}.

\begin{prop}\label{P:multibdysections}
We have an equivalence

$$\underset{Q_{\vec{\jmath}\in\mathcal{N}_{\vec{\jmath}}}}{\hocolim}\,\Gamma\left({\vec{U}\choose {\vec{\jmath}}}\cap Q_{\vec{\jmath}},Z;p\right)\simeq\underset{R\neq\emptyset}{\holim}\,\underset{Q_R\in\mathcal{N}_R}{\hocolim}\,\Gamma\left({\vec{U}\choose {\vec{\jmath}}}\cap Q_R,Z;p\right).$$

\end{prop}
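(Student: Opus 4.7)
The plan is to identify both sides with a common iterated limit/colimit expression, using three ingredients: (i) homotopy-sheaf descent for spaces of sections over a finite open cover, (ii) commutation of finite homotopy limits with filtered homotopy colimits, and (iii) a cofinality argument coming from the product decomposition $\mathcal{N}_{\vec{\jmath}} \cong \prod_{i=1}^{m} \mathcal{N}_{\{i\}}$ noted just after \refD{nbhdfat}.

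First I would observe that every $Q_{\vec{\jmath}} \in \mathcal{N}_{\vec{\jmath}}$ admits a presentation $Q_{\vec{\jmath}} = \cup_{i=1}^{m} Q_i$ with $Q_i \in \mathcal{N}_{\{i\}}$, and that for non-empty $R \subseteq \underline{m}$ the intersection $\cap_{i \in R} Q_i$ lies in $\mathcal{N}_R$ by inspection of \refD{nbhdfat}. Intersecting with ${\vec{U}\choose \vec{\jmath}}$ preserves unions and intersections, so the sets ${\vec{U}\choose \vec{\jmath}} \cap Q_i$ form a finite open cover of ${\vec{U}\choose \vec{\jmath}} \cap Q_{\vec{\jmath}}$. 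Descent for sections of the fibration $p$ (the homotopy-sheaf property, obtained by iterating the two-set Mayer--Vietoris homotopy pullback square for $\Gamma$) then gives, for each fixed $\vec{Q} = (Q_1, \ldots, Q_m)$,
$$
\Gamma\!\left({\vec{U}\choose \vec{\jmath}} \cap Q_{\vec{\jmath}}, Z; p\right) \;\simeq\; \underset{R \neq \emptyset}{\holim}\, \Gamma\!\left({\vec{U}\choose \vec{\jmath}} \cap \textstyle\bigcap_{i\in R} Q_i,\, Z; p\right).
$$

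Next I would apply $\hocolim$ over $\mathcal{N}_{\vec{\jmath}}$ to both sides. Since $\mathcal{N}_{\vec{\jmath}}$ is filtered (being a product of filtered neighborhood posets $\mathcal{N}_{\{i\}}$) and the holim on the right is indexed by the finite poset $\mathcal{P}_0(\underline{m})$, the very principle invoked in the text just above the proposition (cf. \cite[Sublemma~7.4]{W:EI1}) allows us to exchange the order, yielding
$$
\underset{Q_{\vec{\jmath}} \in \mathcal{N}_{\vec{\jmath}}}{\hocolim}\, \Gamma\!\left({\vec{U}\choose \vec{\jmath}} \cap Q_{\vec{\jmath}}, Z; p\right) \;\simeq\; \underset{R \neq \emptyset}{\holim}\, \underset{\vec{Q} \in \mathcal{N}_{\vec{\jmath}}}{\hocolim}\, \Gamma\!\left({\vec{U}\choose \vec{\jmath}} \cap \textstyle\bigcap_{i\in R} Q_i,\, Z; p\right).
$$
For each fixed $R$ the inner integrand depends only on the coordinates $Q_i$ with $i \in R$, so it factors through the product projection $\pi_R\colon \mathcal{N}_{\vec{\jmath}} \cong \prod_{i=1}^m \mathcal{N}_{\{i\}} \to \prod_{i \in R} \mathcal{N}_{\{i\}} \cong \mathcal{N}_R$ given by $\vec{Q} \mapsto \bigcap_{i \in R} Q_i$. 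The fibers of $\pi_R$ are products of filtered posets, hence have contractible nerves, so $\pi_R$ is homotopy cofinal and the outer hocolim collapses onto $\mathcal{N}_R$, delivering the desired equivalence.

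The main obstacle I anticipate is Step 1, the homotopy-sheaf descent for sections: one must know that $\Gamma(-, Z; p)$ turns finite open covers of a space into homotopy cartesian $\mathcal{P}_0$-diagrams. In the fibrant simplicial setting of \refS{Conventions} this reduces to checking that, for the fibration $p$ and two opens $A$, $B$, the restriction square $\Gamma(A\cup B, Z; p) \to \Gamma(A, Z; p) \times^h_{\Gamma(A\cap B, Z; p)} \Gamma(B, Z; p)$ is a homotopy pullback, and then iterating to $m$ opens; verifying this cleanly and keeping track of the basepoint/fibrancy hypotheses on $p$ is the only real technical work, as the swap and cofinality steps are essentially formal.
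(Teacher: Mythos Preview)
Your proof is correct and follows essentially the same route as the paper: decompose $Q_{\vec{\jmath}}=\cup_i Q_i$, apply descent for $\Gamma$ to the finite cover, commute the filtered hocolim past the finite $\holim_{R\neq\emptyset}$, and collapse the product hocolim to $\mathcal{N}_R$. The only difference is cosmetic: where you invoke Mayer--Vietoris descent and worry about verifying it, the paper simply observes that $\Gamma(-,Z;p)$ sends homotopy colimits to homotopy limits (since the cover $\{Q_i\cap{\vec{U}\choose\vec{\jmath}}\}$ realizes ${\vec{U}\choose\vec{\jmath}}\cap Q_{\vec{\jmath}}$ as $\hocolim_{R\neq\emptyset}\,{\vec{U}\choose\vec{\jmath}}\cap Q_R$), so the step you flag as the main obstacle is in fact a one-line general fact and requires no separate verification.
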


\begin{proof}
For $R\subset\underline{m}$, put $Q_R=\cap_{i\in R}Q_i$, where $Q_i\in\mathcal{N}_i$. Of course, $Q_R\in\mathcal{N}_R$. We have the following sequence of equivalences.

\begin{eqnarray*}
\underset{Q_{\vec{\jmath}\in\mathcal{N}_{\vec{\jmath}}}}{\hocolim}\,\Gamma\left({\vec{U}\choose {\vec{\jmath}}}\cap Q_{\vec{\jmath}},Z;p\right)&\simeq&\underset{(Q_1,\ldots, Q_m)\in\prod_{i=1}^m\mathcal{N}_{i}}{\hocolim}\;\Gamma\left({U\choose k}\cap \hocolim_{R\neq\emptyset}Q_{R},Z;p\right)\\
&\simeq& \underset{(Q_1,\ldots, Q_m)\in\prod_{i=1}^m\mathcal{N}_{i}}{\hocolim}\;\underset{R\neq\emptyset}{\holim}\;\Gamma\left({U\choose k}\cap Q_{R},Z;p\right)\\
&\simeq&\underset{R\neq\emptyset}{\holim}\;\underset{Q_r\in\mathcal{N}_{R}}{\hocolim}\;\Gamma\left({U\choose k}\cap Q_{R},Z;p\right).\\
\end{eqnarray*}

The second to last equivalence follows from the fact that $\Gamma(-)$ sends homotopy colimits to homotopy limits. The last equivalence follows from the fact that a homotopy colimit over a product of categories can be written as an iterated homotopy colimit of filtered categories, and filtered homotopy colimits commute with finite homotopy limits. Again, \cite[Sublemma 7.4]{W:EI1} applies.
\end{proof}

Now that we understand that the source and target of the map in \refE{multisections1} are products and the map between them is a product of maps, it is sensible to make the following definitions.

\begin{defin}\label{D:homogdegreej}
Define $$\Gamma\left(\del_R{\vec{U}\choose\vec{\jmath}}\right)=\underset{Q\in\mathcal{N}_R}{\hocolim}\, \Gamma\left({\vec{U}\choose \vec{\jmath}}\cap Q,Z;p\right),$$ and define $$\Gamma^c\left({\vec{U}\choose \vec{\jmath}},Z;p\right)=\hofiber\left(\Gamma\left({\vec{U}\choose \vec{\jmath}},Z;p\right)\rightarrow\underset{R\neq\emptyset}{\holim}\,\Gamma\left(\del_R{\vec{U}\choose \vec{\jmath}},Z;p\right)\right).$$
\end{defin}





Thus a good candidate for the degree $\vec{\jmath}=(j_1,\ldots, j_m)$ homogeneous layer of the Taylor tower for $F$ is the space of compactly supported sections $\Gamma^c\left({\vec{U}\choose \vec{\jmath}},Z;p\right)$. The following is a definition of homogeneous of degree $\vec{\jmath}$ which makes this true.

\begin{defin}\label{D:MultiHomogeneous}
A good functor $E:\mathcal{O}(\vec{P})\rightarrow\Spaces$ is \emph{homogeneous of degree $\vec{\jmath}$} if it is polynomial of degree $\leq \vec{\jmath}$ and $\holim_{\vec{k}<\vec{\jmath}}T_{\vec{k}}E(\vec{U})$ is contractible for all $\vec{U}$.
\end{defin}

\begin{rem}
It may appear strange that the homotopy limit over $R$ is not a part of this definition; after all, we just got through guessing what the candidate for the homogeneous layers of a good functor should look like, and the answer involved such a homotopy limit. We refer the reader to \refP{mcubehomog} to clear this up. The definition we have given is easier to work with in many instances. 
The other aspect that may appear puzzling to the reader is requiring $\holim_{\vec{k}<\vec{\jmath}}T_{\vec{k}}E(\vec{U})$ to be contractible instead of the stronger condition that $T_{\vec{k}}E(\vec{U})$ is contractible for all $\vec{k}<\vec{\jmath}$. It will follow from the classification of homogeneous functors that if $E$ is homogeneous of degree $\vec{\jmath}$, then $T_{\vec{k}}E(\vec{U})$ is contractible for all $\vec{U}$ and for all $\vec{k}<\vec{\jmath}$. See \refC{homogalt} for a full statement and explanation.
\end{rem}

For the remainder of this section, choose a basepoint in $F(\vec{P})$.


\begin{defin}\label{D:MultiLayer}
We define the \emph{$\vec{\jmath}^{th}$ layer of the Taylor multi-tower} of $F$ to be the functor

$$L_{\vec{\jmath}}F=\hofiber(T_{\vec{\jmath}}F\rightarrow \underset{\vec{k}<\vec{\jmath}}{\holim}\, T_{\vec{k}}F).$$

\end{defin}

\begin{rem}\label{R:MultiLayerCube} As mentioned earlier, 
we will see in \refP{mcubehomog} below that we can think of $L_{\vec{\jmath}}F$ as the total homotopy fiber of an $m$-cube of functors.
\end{rem}

\begin{prop}\label{P:multilayerhomog}
$L_{\vec{\jmath}}F$ is homogeneous of degree $\vec{\jmath}$.
\end{prop}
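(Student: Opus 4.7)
The proposition asserts that $L_{\vec{\jmath}}F$ is (a) polynomial of degree $\leq \vec{\jmath}$, and (b) satisfies $\holim_{\vec{k}<\vec{\jmath}} T_{\vec{k}} L_{\vec{\jmath}}F(\vec{U}) \simeq \ast$ for all $\vec{U}$. For (a): $T_{\vec{\jmath}}F$ is polynomial of degree $\leq \vec{\jmath}$ by \refT{MultiTkpoly}(1), each $T_{\vec{k}}F$ for $\vec{k}<\vec{\jmath}$ is polynomial of degree $\leq \vec{\jmath}$ by \refT{MultiTkpoly}(3), and the defining homotopy-cartesianness condition is closed under homotopy limits and homotopy fibers, so $L_{\vec{\jmath}}F=\hofiber(T_{\vec{\jmath}}F \to \holim_{\vec{k}<\vec{\jmath}} T_{\vec{k}}F)$ is polynomial of degree $\leq \vec{\jmath}$.

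\textbf{Reduction for (b).} I plan to prove the stronger claim that $T_{\vec{k}_0}L_{\vec{\jmath}}F \simeq \ast$ for every $\vec{k}_0<\vec{\jmath}$; contractibility of the homotopy limit follows immediately. Writing $G = \holim_{\vec{l}<\vec{\jmath}} T_{\vec{l}}F$ and using that $T_{\vec{k}_0}$, being itself a homotopy limit, commutes with homotopy limits and fibers, one gets
\[
T_{\vec{k}_0} L_{\vec{\jmath}}F \simeq \hofiber\bigl(T_{\vec{k}_0} T_{\vec{\jmath}}F \longrightarrow \holim_{\vec{l}<\vec{\jmath}} T_{\vec{k}_0} T_{\vec{l}}F\bigr).
\]
Then \refP{iteratedvecT} replaces $T_{\vec{k}_0} T_{\vec{\jmath}}F$ by $T_{\vec{k}_0}F$ and each $T_{\vec{k}_0} T_{\vec{l}}F$ by $T_{\min(\vec{k}_0,\vec{l})}F$, so it suffices to show that the structure-map-induced map $T_{\vec{k}_0}F \to \holim_{\vec{l}<\vec{\jmath}} T_{\min(\vec{k}_0,\vec{l})}F$ is an equivalence.

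\textbf{Cofinality argument.} I would obtain this from cofinality of the order-preserving functor $\nu\colon \mathcal{Z}^m_{<\vec{\jmath}} \to \mathcal{Z}^m_{\leq \vec{k}_0}$, $\vec{l}\mapsto\min(\vec{k}_0,\vec{l})$. The relevant slice at each $\vec{m}\in \mathcal{Z}^m_{\leq \vec{k}_0}$ simplifies (since $\min(\vec{k}_0,\vec{l})\geq \vec{m}$ iff $\vec{l}\geq \vec{m}$, using $\vec{m}\leq \vec{k}_0$) to
\[
\{\vec{l} \in \mathcal{Z}^m_{<\vec{\jmath}} : \vec{l} \geq \vec{m}\},
\]
which is nonempty (it contains $\vec{m}$, because $\vec{m}\leq \vec{k}_0 <\vec{\jmath}$ forces $\vec{m}<\vec{\jmath}$) and has $\vec{m}$ as its minimum, so its nerve is contractible. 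Therefore
\[
\holim_{\vec{l}<\vec{\jmath}} T_{\min(\vec{k}_0,\vec{l})}F \simeq \holim_{\vec{m}\leq \vec{k}_0} T_{\vec{m}}F \simeq T_{\vec{k}_0}F,
\]
the last equivalence because $\vec{k}_0$ is the maximum of $\mathcal{Z}^m_{\leq \vec{k}_0}$ and so the contravariant diagram $T_\bullet F$ has $T_{\vec{k}_0}F$ as the source of all its structure maps. Unwinding the cofinality equivalence identifies it with the natural map above (at $\vec{l}=\vec{k}_0$ the relevant structure map is the identity), so $T_{\vec{k}_0}L_{\vec{\jmath}}F$ is the homotopy fiber of an equivalence, hence contractible.

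\textbf{Anticipated difficulty.} The essential ingredients (\refP{iteratedvecT} and the commutation of $T_{\vec{k}_0}$ with homotopy limits and fibers) are standard. The real care lies in the cofinality step: the more obvious inclusion $\{\vec{l}\geq \vec{k}_0\}\cap \mathcal{Z}^m_{<\vec{\jmath}} \hookrightarrow \mathcal{Z}^m_{<\vec{\jmath}}$ can fail to be cofinal when some $\vec{c}\in \mathcal{Z}^m_{<\vec{\jmath}}$ satisfies $\max(\vec{c},\vec{k}_0)=\vec{\jmath}$, so I must factor through $\mathcal{Z}^m_{\leq \vec{k}_0}$ via $\nu$ and be careful about variance (the tower is contravariant, so the cofinality criterion must be read on the opposite posets), and then verify that the abstract equivalence produced by cofinality coincides with the natural map being analyzed.
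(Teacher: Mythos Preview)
Your proof is correct and uses essentially the same mechanism as the paper: commute $T_{\vec{k}_0}$ past the homotopy fiber and homotopy limit, invoke \refP{iteratedvecT} to replace $T_{\vec{k}_0}T_{\vec{l}}F$ by $T_{\min(\vec{k}_0,\vec{l})}F$, and then collapse the resulting diagram via a cofinality/final-object argument. The only organizational difference is that the paper works directly with $\holim_{\vec{l}<\vec{\jmath}}T_{\vec{l}}L_{\vec{\jmath}}F$ and reduces to showing $\holim_{\vec{l}<\vec{\jmath}}\holim_{\vec{k}<\vec{\jmath}}T_{\min(\vec{l},\vec{k})}F\simeq\holim_{\vec{l}<\vec{\jmath}}T_{\vec{l}}F$ (the same cofinality, applied inside the outer $\holim$), whereas you prove the stronger pointwise statement $T_{\vec{k}_0}L_{\vec{\jmath}}F\simeq\ast$ for each $\vec{k}_0<\vec{\jmath}$. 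The paper actually postpones that stronger statement to \refC{homogalt}, deducing it only after the classification theorem; your route gets it immediately and more elementarily, which is a small gain.
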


\begin{proof}
It is clear that $L_{\vec{\jmath}}F$ is polynomial of degree $\leq \vec{\jmath}$, since $T_{\vec{\jmath}}F$ is and so are $T_{\vec{k}}F$ for $\vec{k}<\vec{\jmath}$. To check the second condition, consider the following sequence of equivalences.

\begin{align*}
\underset{\vec{l}<\vec{\jmath}}{\holim}\, T_{\vec{l}}L_{\vec{\jmath}}F =&\, \underset{\vec{l}<\vec{\jmath}}{\holim}\, T_{\vec{l}}\hofiber(T_{\vec{\jmath}}F\rightarrow \underset{\vec{k}<\vec{\jmath}}{\holim}\, T_{\vec{k}}F)\\
 \simeq &\,  \hofiber(\underset{\vec{l}<\vec{\jmath}}{\holim}\, T_{\vec{l}}F\rightarrow\underset{\vec{l}<\vec{\jmath}}{\holim}\, \underset{\vec{k}<\vec{\jmath}}{\holim}\, T_{\min\{\vec{l},\vec{k}\}}F)\\
 \simeq & \, \hofiber(\underset{\vec{l}<\vec{\jmath}}{\holim}\, T_{\vec{l}}F\rightarrow\underset{\vec{l}<\vec{\jmath}}{\holim}\, T_{\vec{l}}F)\\
 \simeq &\, \ast
\end{align*}

The second and penultimate equivalences follow from \refP{iteratedvecT} and the fact that homotopy limits and homotopy fibers commute.
\end{proof}


\subsection{Spaces of sections}\label{S:Sections}


We devote this section to studying a particular class of homogeneous functors, namely those which arise as the space of sections of a fibration. As in the single variable case, these are important examples, because in the sense of \refT{multihomogclass} below, they are universal.

Let $\vec{P}=(P_1,\ldots, P_m)$,  $\vec{\jmath}=(j_1,\ldots, j_m)$, and  
$$p\colon Z\longrightarrow {\vec{P}\choose \vec{\jmath}}=\prod_i{P_i\choose j_i}$$ be a fibration.

\begin{defin}
Define $$\vec{U}\longmapsto\Gamma\left({\vec{U}\choose \vec{\jmath}},Z;p\right)$$ to be the functor which associates to $\vec{U}$ the space of sections of the fibration $p$. When  $p$ is understood, we may write this functor as $\Gamma{\vec{U}\choose\vec{\jmath}}$ for short.
\end{defin}

\begin{lemma}\label{L:sectionspoly}
$\Gamma{\vec{U}\choose\vec{\jmath}}$ is polynomial of degree $\leq \vec{\jmath}$.
\end{lemma}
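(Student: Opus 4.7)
My plan is to reduce the claim to its single-variable analog via Proposition~\ref{P:multipolyseparate}, which tells us that $\Gamma{\vec{U}\choose\vec{\jmath}}$ is polynomial of degree $\leq\vec{\jmath}$ if and only if it is polynomial of degree $\leq j_i$ in the $i^{th}$ variable for each $i$. I would fix an index $i$ and fix $U_k\in\mathcal{O}(P_k)$ for $k\neq i$, and then show that, with $V$ plugged into the $i^{th}$ slot of $\vec{U}$, the contravariant functor $V\mapsto\Gamma{\vec{U}\choose\vec{\jmath}}$ on $\mathcal{O}(P_i)$ is polynomial of degree $\leq j_i$ in Weiss's single-variable sense.

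To set this up, I would write $X=\prod_{k\neq i}{U_k\choose j_k}$, so that the base of the fibration over which sections are being taken is $X\times{V\choose j_i}$. Given pairwise disjoint closed subsets $A_0,\ldots,A_{j_i}$ of $V$, put $V_S=V-\bigcup_{s\in S}A_s$. The pigeonhole principle gives $\bigcup_{\emptyset\neq S\subset[j_i]}{V_S\choose j_i}={V\choose j_i}$, and more strongly one checks, as in Weiss's proof that $U\mapsto\Gamma({U\choose k},Z;p)$ is polynomial of degree $\leq k$ (see Example~2.4 of \cite{W:EI1}), that the cube $S\mapsto{V_S\choose j_i}$ assembles into a sufficiently nice open cover for the contravariant functor of sections to send it to a homotopy cartesian cube. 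Taking the product with the constant space $X$ preserves this covering property, so the cube $S\mapsto X\times{V_S\choose j_i}$ is converted by $\Gamma(-,Z;p)$ into a homotopy cartesian cube, which is exactly the $\vec{\jmath}$-slice of the polynomial condition in the $i^{th}$ variable.

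The only nontrivial input is the ``nice cover yields a homotopy limit of section spaces'' principle --- that if $\{W_S\}$ is a reasonable open cover of $W$ indexed by a punctured cube, then $\Gamma(W,Z;p)\to\holim_{S\neq\emptyset}\Gamma(W_S,Z;p)$ is a weak equivalence. I expect this to be the main technical point of the argument, but since the single-variable version is already in Weiss and since the extra factor $X$ plays a purely passive role in our situation, there is nothing new to verify; I would simply quote the single-variable result and conclude by invoking Proposition~\ref{P:multipolyseparate}.
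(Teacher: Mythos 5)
Your argument is correct, but it is organized differently from the paper's. The paper verifies \refD{Multipoly} directly in all variables at once: given the closed sets $A^i_0,\ldots,A^i_{j_i}$ in each $U_i$, a single multivariable pigeonhole argument shows that the spaces ${\vec{U}_{\vec{S}}\choose\vec{\jmath}}$, $\vec{S}\in\mathcal{P}_0([\vec{\jmath}])$, cover ${\vec{U}\choose\vec{\jmath}}$, so the homotopy colimit of the punctured multicube maps to ${\vec{U}\choose\vec{\jmath}}$ by an equivalence, and applying $\Gamma(-,Z;p)$, which turns this homotopy colimit into a homotopy limit, gives exactly the required equivalence. You instead fix all variables but the $i^{th}$, check single-variable polynomiality of degree $\leq j_i$ with the base $X\times{V\choose j_i}$ (quoting Weiss for the descent property of section spaces over a suitable cover, with the fixed factor $X$ carried along passively), and then assemble via \refP{multipolyseparate}. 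Both proofs hinge on the same two inputs --- the pigeonhole cover and the fact that $\Gamma(-,Z;p)$ converts homotopy colimits of such covers into homotopy limits --- so neither is more delicate than the other; the paper's version saves the appeal to \refP{multipolyseparate} and avoids having to remark that crossing with $X$ is harmless, while yours leans more heavily on the already-established single-variable theory. It is worth noting that your strategy is precisely the one the paper uses for the neighboring \refL{germsofsectionspoly}, where the proof cites \cite[Proposition 7.5]{W:EI1} together with \refP{multipolyseparate}, so your route is fully consistent with the paper's toolkit.
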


\begin{proof}
Let $\vec{U}=(U_1,\ldots, U_m)\in\mathcal{O}(\vec{P})$, and for each $i$, let $A^i_0,\ldots, A^i_{j_i}$ be pairwise disjoint closed subsets of $U_i$. Put $U_{S_i}=U_i-\cup_{a\in S_i} A^i_a$, where $S_i\subset\{0,1,\ldots, j_i\}$. The $\vec{U}_{\vec{S}}\choose\vec{\jmath}$ cover $\vec{U}\choose\vec{\jmath}$ by the pigeonhole principle, and so the map 
$$
\underset{\vec{S}\in\mathcal{P}_0([\vec{\jmath}])}{\hocolim}{\vec{U}_{\vec{S}}\choose \vec{\jmath}}\longrightarrow {\vec{U}\choose\vec{\jmath}}
$$ 
is an equivalence. Applying $\Gamma(-)$ turns the homotopy colimit into a homotopy limit and gives the desired equivalence.
\end{proof}

The remainder of this section is concerned with building new functors from $\Gamma{\vec{U}\choose \vec{\jmath}}$, namely those which should be thought of as spaces of germs of sections near the fat diagonal. We also explore the relationship between these functors and the approximations $T_{\vec{k}}\Gamma{\vec{U}\choose \vec{\jmath}}$ for $\vec{k}<\vec{\jmath}$. \refL{tjrlemma} will tell us that these are lower-degree approximations to $\Gamma{\vec{U}\choose \vec{\jmath}}$, and this will aid us in building a homogeneous degree $\vec{\jmath}$ functor from it.

Our immediate goal is to prove \refL{germsofsectionspoly} below, and to do so we need to understand the homotopy colimit which appears in \refD{homogdegreej} a bit better. Following \cite{W:EI1}, choose a smooth triangulation on $\prod_i U^{j_i}_i$ which is equivariant with respect to the action of $\Sigma_{j_1}\times\cdots\times\Sigma_{j_m}$. This endows $sp_{\vec{\jmath}}\vec{U}=\prod_i U_i^{j_i}/\Sigma_{j_i}$ with a preferred PL structure with $\Delta_{\vec{fat}}$ as a PL subspace, and now we can talk about regular neighborhoods of $\Delta_{\vec{fat}}$.


Let $\mathcal{R}_R\subset \mathcal{N}_R$ denote the subposet of regular neighborhoods.

\begin{lemma}\label{L:regnbhdcofinal}
The inclusion $\mathcal{R}_R\rightarrow\mathcal{N}_R$ is left cofinal.
\end{lemma}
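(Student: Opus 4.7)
The plan is to verify the standard criterion that an inclusion of posets is left cofinal if and only if, for each $Q \in \mathcal{N}_R$, the subposet of $\mathcal{R}_R$ consisting of objects admitting a morphism from $Q$ has contractible nerve. Since the morphisms in $\mathcal{N}_R$ are reverse inclusions (so that the hocolim of $\Gamma\bigl({\vec{U}\choose\vec{\jmath}}\cap Q, Z; p\bigr)$ records germs near $\Delta_{\vec{fat}}$), this reduces to showing that $\{M \in \mathcal{R}_R : M \subseteq Q\}$ is nonempty and cofiltered for each $Q$, since a nonempty cofiltered poset has contractible nerve.

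First I would use the identification $\mathcal{N}_R \cong \prod_{i \in R} \mathcal{N}_{\{i\}}$, together with the analogous identification for $\mathcal{R}_R$, to reduce the problem to a single-variable statement: for each $i \in R$ and each open neighborhood $N_i$ of $\Delta_{fat}$ in $sp_{j_i} P_i$, the poset of regular neighborhoods of $\Delta_{fat}$ contained in $N_i$ should be nonempty and cofiltered. Products of nonempty cofiltered posets are nonempty and cofiltered, so this reduction will suffice.

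For the single-variable claim I would invoke classical PL topology. The chosen smooth $\prod_i \Sigma_{j_i}$-equivariant triangulation descends to a PL structure on each $sp_{j_i} P_i$ in which $\Delta_{fat}$ is a compact PL subcomplex (here I use that $P_i$ is compact). Whitehead's regular neighborhood theorem then guarantees that any open neighborhood of the compact PL subspace $\Delta_{fat}$ contains a regular neighborhood, handling nonemptiness. For the cofiltered property, the intersection of two regular neighborhoods inside $N_i$ is again an open neighborhood of $\Delta_{fat}$, and a further application of the existence theorem (possibly after subdividing the triangulation) produces a regular neighborhood contained in the intersection.

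The main obstacle is really just conceptual bookkeeping: keeping the direction of morphisms straight so that the translation ``left cofinal'' equals ``arbitrarily small regular neighborhoods exist inside any neighborhood,'' and confirming that the product decomposition respects the cofinality criterion. The PL topology input is classical and already implicit in the single-variable cofinality argument in \cite{W:EI1}, so no new ideas are needed beyond the product decomposition.
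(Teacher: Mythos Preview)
Your proposal is correct and follows essentially the same approach as the paper: the paper's entire proof is the single sentence ``Every neighborhood contains a regular neighborhood,'' which is precisely the PL input you invoke, and your additional verification that the relevant under-poset is cofiltered (together with the product reduction) simply unpacks what the paper leaves implicit. The minor discrepancy about whether morphisms in $\mathcal{N}_R$ are inclusions or reverse inclusions does not affect the argument, since you correctly identify $\{M\in\mathcal{R}_R:M\subseteq Q\}$ as the poset whose contractibility must be checked.
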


\begin{proof}
Every neighborhood contains a regular neighborhood.
\end{proof}

Since every regular neighborhood has the same homotopy type, the following lemma follows from \refL{regnbhdcofinal}. It will be useful in the proof of \refL{germsofsectionspoly}.


\begin{lemma}\label{L:regnbhd}
If $L$ is a regular neighborhood of $\Delta_{\vec{fat}}$, then the inclusion
$$\Gamma\left(\del_R{\vec{U}\choose\vec{\jmath}}\cap L,Z;p\right)\longrightarrow\underset{Q\in\mathcal{N}_R}{\hocolim}\, \Gamma\left({\vec{U}\choose \vec{\jmath}}\cap Q,Z;p\right)$$
is a homotopy equivalence for all $\vec{U}$.
\end{lemma}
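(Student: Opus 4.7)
The plan is to combine Lemma~\ref{L:regnbhdcofinal} with the observation that every transition map in the hocolim, once restricted to regular neighborhoods, is a weak equivalence. By Lemma~\ref{L:regnbhdcofinal}, the inclusion of the subposet $\mathcal{R}_R$ of regular neighborhoods into $\mathcal{N}_R$ is left cofinal, so the natural map
$$\underset{L' \in \mathcal{R}_R}{\hocolim}\, \Gamma\left({\vec{U}\choose \vec{\jmath}}\cap L',Z;p\right) \longrightarrow \underset{Q\in\mathcal{N}_R}{\hocolim}\, \Gamma\left({\vec{U}\choose \vec{\jmath}}\cap Q,Z;p\right)$$
is a weak equivalence. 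It therefore suffices to show that, for any $L \in \mathcal{R}_R$, the canonical map from $\Gamma({\vec{U}\choose\vec{\jmath}} \cap L, Z; p)$ into the left-hand hocolim is a weak equivalence.

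Next I would observe that $\mathcal{R}_R$, ordered so that $L' \leq L''$ exactly when $L'' \subset L'$, is filtered: any two regular neighborhoods of $\Delta_{\vec{fat}}$ admit a common regular neighborhood contained in their intersection, by standard regular neighborhood theory applied to the chosen PL structure on $sp_{\vec{\jmath}}\, \vec{U}$. Under this ordering the functor $L' \mapsto \Gamma({\vec{U}\choose\vec{\jmath}} \cap L', Z; p)$ is covariant via the restriction maps. The key point is then that for any $L'' \subset L'$ in $\mathcal{R}_R$, the inclusion ${\vec{U}\choose\vec{\jmath}} \cap L'' \hookrightarrow {\vec{U}\choose\vec{\jmath}} \cap L'$ is a homotopy equivalence, because any two regular neighborhoods of $\Delta_{\vec{fat}}$ are related by an ambient PL isotopy which restricts to an isotopy of their complements. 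Since $p$ is a fibration, this homotopy equivalence of bases induces a weak equivalence of section spaces, so every map in the filtered system is a weak equivalence.

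To conclude, a filtered hocolim of a system of weak equivalences between (fibrant) section spaces is weakly equivalent to any single term; in particular it is weakly equivalent to $\Gamma({\vec{U}\choose\vec{\jmath}} \cap L, Z; p)$ via the canonical map. Composing with the cofinality equivalence gives the desired statement.

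The main obstacle will be justifying cleanly that an inclusion $L'' \subset L'$ of regular neighborhoods induces a weak equivalence on the corresponding section spaces. This is the substantive ingredient and rests on the homotopy invariance of spaces of sections of a fibration under cofibrant homotopy equivalences of the base, together with the PL regularity of the $\Sigma_{j_1} \times \cdots \times \Sigma_{j_m}$-equivariant triangulation used to define $\mathcal{R}_R$, which ensures that the relevant inclusions of intersections with ${\vec{U}\choose\vec{\jmath}}$ are cofibrations.
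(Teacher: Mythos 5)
Your argument is correct and follows essentially the same route as the paper: reduce via the cofinality of $\mathcal{R}_R\subset\mathcal{N}_R$ (\refL{regnbhdcofinal}) and then use the fact that all regular neighborhoods of $\Delta_{\vec{fat}}$ have the same homotopy type, so every map in the resulting filtered system of section spaces is an equivalence and the hocolim agrees with any single term. The paper compresses this into one sentence; your write-up simply supplies the details (filteredness of the regular-neighborhood poset, homotopy invariance of section spaces of the fibration $p$) that the paper leaves implicit.
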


This observation will afford us the freedom to choose $L$ to suit our needs in the next proof.

\begin{defin}\label{D:jsubr}
For $R\subset\underline{m}$ and $\vec{\jmath}=(j_1,\ldots, j_m)$, define $\vec{\jmath}_R=(j_{1,R},\ldots, j_{m,R})$ by
\begin{equation*}
j_{i,R}= 
\begin{cases}
j_i, &\text{if $ i\notin R$;}\\
j_i-1, &\text{if $i\in R$.}
\end{cases}
\end{equation*}
\end{defin}

We now have the following useful lemma.
\begin{lemma}\label{L:germsofsectionspoly}
$\Gamma\left(\del_R{\vec{U}\choose\vec{\jmath}}\right)$ is polynomial of degree $\leq\vec{\jmath}_R$.
\end{lemma}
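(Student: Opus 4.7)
My plan is to use Proposition \ref{P:multipolyseparate} to reduce the claim to verifying that $\Gamma\left(\del_R{\vec{U}\choose\vec{\jmath}}\right)$ is polynomial of degree $\leq j_{i,R}$ separately in each variable $U_i$. So fix $i$ and pairwise disjoint closed sets $A_0^i,\ldots,A_{j_{i,R}}^i\subset U_i$, and for $S_i\subseteq[j_{i,R}]$ write $\vec{U}_{S_i}$ for the tuple obtained by replacing $U_i$ with $U_i-\bigcup_{a\in S_i}A_a^i$. We must show that the canonical map
$$
\Gamma\left(\del_R{\vec{U}\choose\vec{\jmath}}\right)\longrightarrow\underset{S_i\in\mathcal{P}_0([j_{i,R}])}{\holim}\,\Gamma\left(\del_R{\vec{U}_{S_i}\choose\vec{\jmath}}\right)
$$
is an equivalence.

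By Lemma \ref{L:regnbhd}, for any regular neighborhood $Q\in\mathcal{R}_R$ the hocolim defining $\Gamma\left(\del_R{\vec{U}\choose\vec{\jmath}}\right)$ is equivalent, naturally in $\vec{U}$, to $\Gamma\left({\vec{U}\choose\vec{\jmath}}\cap Q, Z; p\right)$. Choosing $Q$ depending on the $A_a^i$, the polynomial condition thus reduces to showing that the map $\Gamma\left({\vec{U}\choose\vec{\jmath}}\cap Q\right)\to\holim_{S_i}\Gamma\left({\vec{U}_{S_i}\choose\vec{\jmath}}\cap Q\right)$ is an equivalence. Since $\Gamma(-, Z; p)$ sends homotopy colimits of open subsets to homotopy limits, as in the proof of Lemma \ref{L:sectionspoly}, it suffices to verify the open cover relation
$$
{\vec{U}\choose\vec{\jmath}}\cap Q\;=\;\bigcup_{S_i\in\mathcal{P}_0([j_{i,R}])}\left({\vec{U}_{S_i}\choose\vec{\jmath}}\cap Q\right).
$$

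This is a pigeonhole argument that splits on whether $i\in R$. If $i\notin R$, the $i$-th factor of $Q$ is all of ${P_i\choose j_i}$, and a configuration in ${U_i\choose j_i}$ has $j_i$ distinct points; among the $j_{i,R}+1=j_i+1$ pairwise disjoint sets $A_a^i$ some one is missed entirely, placing the configuration in ${\vec{U}_{S_i}\choose\vec{\jmath}}$ for any $S_i\ni a$. If $i\in R$, we have only $j_{i,R}+1=j_i$ sets $A_a^i$, but we may choose the $i$-th factor of $Q$ to come from a regular neighborhood $N_i$ of $\Delta_{fat}$ in $sp_{j_i}P_i$ thin enough that any two points of a configuration representing a point in $N_i$ lie within distance less than $\min_{a\ne b}d(A_a^i,A_b^i)$. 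Any such configuration then has a pair of points within this minimum distance, and one checks that such a pair must lie jointly in a single $A_a^i$ or jointly outside $\bigcup_aA_a^i$. Consequently the $j_i$ points of the configuration hit at most $j_i-1$ of the sets $A_a^i$, and pigeonhole against $j_i$ sets yields some $A_a^i$ disjoint from the whole configuration.

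The main obstacle is the case $i\in R$, where the naive pigeonhole count fails and we must exploit the degeneracy of configurations near $\Delta_{fat}$ to reduce the effective number of points from $j_i$ to $j_i-1$; this is precisely the mechanism by which the degree in the $i$-th variable drops from $j_i$ to $j_i-1$. Crucially, Lemma \ref{L:regnbhd} grants us the freedom to choose $Q$ depending on the sets $A_a^i$, which is exactly what makes this reduction possible. Once the covering is established in both cases, $\Gamma$ converts the hocolim cover into the required holim equivalence and the lemma follows.
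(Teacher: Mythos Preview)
Your approach is essentially the same as the paper's: the paper simply cites \cite[Proposition 7.5]{W:EI1} together with \refP{multipolyseparate}, and what you have written is a direct unpacking of Weiss' Proposition 7.5 in the multivariable setting, using the same regular-neighborhood/pigeonhole mechanism.

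One small imprecision: in the case $i\in R$, your claim that a close pair ``must lie jointly in a single $A_a^i$ or jointly outside $\bigcup_a A_a^i$'' is not literally true---one point could be in $A_a^i$ and the other just outside all the $A_b^i$. What \emph{is} true, and what you need, is that such a pair cannot lie in two \emph{different} $A_a^i$'s; this already forces the $j_i$ points to meet at most $j_i-1$ of the sets, so your conclusion stands. You should also be a little careful that $\min_{a\neq b}d(A_a^i,A_b^i)>0$: the $A_a^i$ are closed in $U_i$, not in $P_i$, so this positivity is not automatic from compactness of $P_i$; one fixes this by working with a complete metric on $U_i$, exactly as in Weiss' proof.
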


\begin{proof}
This follows from \cite[Proposition 7.5]{W:EI1} and \refP{multipolyseparate}, since to check a functor is polynomial of a certain multi-degree, it is enough to check it one component at a time.

\end{proof}

Since $\Gamma\left(\del_R{\vec{U}\choose\vec{\jmath}}\right)$ is of degree $\leq\vec{\jmath}_R$, one might suspect that it is the degree $\vec{\jmath}_R$ approximation to $\Gamma{\vec{U}\choose\vec{\jmath}}$. This is indeed the case, according to the next lemma.

\begin{lemma}\label{L:tjrlemma}
The map $T_{\vec{\jmath}_R}\Gamma{\vec{U}\choose\vec{\jmath}}\rightarrow T_{\vec{\jmath}_R}\Gamma\left(\del_R{\vec{U}\choose\vec{\jmath}}\right)$ is an equivalence.
\end{lemma}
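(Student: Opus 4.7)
The strategy is to use \refT{Multipolyclass} to reduce the claim to a comparison of section spaces on very small open sets, and then to reduce further to a purely geometric statement about configuration spaces that follows from the single-variable analysis in \cite[\S7]{W:EI1} applied one coordinate at a time.

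By \refT{MultiTkpoly}(1), both $T_{\vec{\jmath}_R}\Gamma{\vec{U}\choose\vec{\jmath}}$ and $T_{\vec{\jmath}_R}\Gamma\left(\del_R{\vec{U}\choose\vec{\jmath}}\right)$ are polynomial of degree $\leq\vec{\jmath}_R$. By \refL{germsofsectionspoly}, $\Gamma\left(\del_R{\vec{U}\choose\vec{\jmath}}\right)$ is itself already polynomial of degree $\leq\vec{\jmath}_R$, so \refT{MultiTkpoly}(2) implies that the canonical map $\Gamma\left(\del_R{\vec{U}\choose\vec{\jmath}}\right)\to T_{\vec{\jmath}_R}\Gamma\left(\del_R{\vec{U}\choose\vec{\jmath}}\right)$ is an equivalence. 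Applying \refT{Multipolyclass} to the natural transformation in question therefore reduces the lemma to showing that the underlying restriction map
$$\Gamma\left({\vec{U}\choose\vec{\jmath}},Z;p\right)\longrightarrow \Gamma\left(\del_R{\vec{U}\choose\vec{\jmath}},Z;p\right)$$
is a homotopy equivalence for every $\vec{U}\in\mathcal{O}_{\vec{\jmath}_R}(\vec{P})$, since for such $\vec{U}$ the source is also equivalent to $T_{\vec{\jmath}_R}\Gamma{\vec{U}\choose\vec{\jmath}}(\vec{U})$ (as $\vec{U}$ is terminal in $\mathcal{O}_{\vec{\jmath}_R}(\vec{U})$).

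Now fix such a $\vec{U}$, so that each $U_i$ is a disjoint union of at most $j_{i,R}$ open balls. By \refL{regnbhd} I may replace the target by $\Gamma\left({\vec{U}\choose\vec{\jmath}}\cap L,Z;p\right)$ for a product regular neighborhood $L=L_1\times\cdots\times L_m$, where $L_i$ is a regular neighborhood of $\Delta_{fat}$ in $sp_{j_i}P_i$ for $i\in R$ and $L_i=sp_{j_i}P_i$ for $i\notin R$. The map is then restriction along the product inclusion ${\vec{U}\choose\vec{\jmath}}\cap L\hookrightarrow {\vec{U}\choose\vec{\jmath}}$, and it suffices to check that for each $i\in R$ the inclusion ${U_i\choose j_i}\cap L_i\hookrightarrow{U_i\choose j_i}$ is a homotopy equivalence compatible with the fibration $p$.

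This last statement is the single-variable content of the argument, which I import directly from \cite[\S7]{W:EI1}: when $U_i$ has at most $j_i-1$ open balls as components, the pigeonhole principle forces any $j_i$ distinct points in $U_i$ to include two points in a common component, and one can produce a deformation retraction of ${U_i\choose j_i}$ onto its intersection with any regular neighborhood of the fat diagonal. The main obstacle is to verify that this deformation retraction can be chosen compatibly with $p$ so as to induce an equivalence on section spaces; however this is precisely the content of the single-variable version of the present lemma in \cite{W:EI1}, and a product of such equivalences suffices on the multi-variable side.
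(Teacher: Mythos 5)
Your proposal is correct and follows essentially the same route as the paper, whose entire proof is the one-line citation that this is a straightforward adaptation of \cite[Proposition 7.6]{W:EI1}: you simply unpack that adaptation, reducing via \refT{Multipolyclass} and terminality of $\vec{U}\in\mathcal{O}_{\vec{\jmath}_R}$ to the restriction map of section spaces, passing to a product regular neighborhood via \refL{regnbhd}, and deferring the coordinatewise pigeonhole/push-points-together step (and its interaction with the fibration) to Weiss's single-variable argument, exactly as the paper does. The only quibble is cosmetic: what is needed is that the product inclusion is a (closed) cofibration and deformation retract of the base, not a retraction "compatible with $p$", but this is precisely what the regular-neighborhood setup and \cite[\S 7]{W:EI1} supply.
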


\begin{proof}
This is a straightforward adaptation of \cite[Proposition 7.6]{W:EI1}.
\end{proof}

We are almost ready to classify homogeneous functors, which is suggested by \refD{homogdegreej}. It tells us how to think about \refL{germsofsectionspoly} and \refL{tjrlemma}, and hints that functors which are homogeneous of degree $\vec{\jmath}$ should be of the form given in \refD{homogdegreej}.

From the discussion so far, we have most of what we need to conclude the following proposition. What we do not yet have is \refP{mcubehomog}, whose content fits better in the next section and is logically independent of what was discussed in this section.

\begin{prop}\label{P:sectionshomog}  The functor
$$\vec{U}\mapsto\Gamma^c\left(\del_R{\vec{U}\choose\vec{\jmath}}\right)$$ is homogeneous of degree $\vec{\jmath}$.
\end{prop}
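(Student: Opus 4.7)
The plan is to verify both conditions of \refD{MultiHomogeneous} (interpreting the statement as referring to the compactly supported section functor of \refD{homogdegreej}, i.e.~$\Gamma^c\bigl({\vec{U}\choose\vec{\jmath}},Z;p\bigr)$). For the polynomial property, I will combine \refL{sectionspoly}, which gives $\Gamma{\vec{U}\choose\vec{\jmath}}$ polynomial of degree $\leq\vec{\jmath}$, with \refL{germsofsectionspoly}, which gives each $\Gamma(\del_R{\vec{U}\choose\vec{\jmath}})$ polynomial of degree $\leq\vec{\jmath}_R$ and hence $\leq\vec{\jmath}$ by \refT{MultiTkpoly}(3). Since being polynomial of degree $\leq\vec{\jmath}$ is closed under homotopy limits and homotopy fibers---both operations commute past the cubical homotopy limit in \refD{Multipoly}---the defining hofiber sequence of $\Gamma^c$ in \refD{homogdegreej} displays it as polynomial of degree $\leq\vec{\jmath}$.

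For the vanishing condition I will actually establish the stronger statement $T_{\vec{k}}\Gamma^c\simeq\ast$ for every $\vec{k}<\vec{\jmath}$. Invoking the upcoming \refP{mcubehomog} to identify $\Gamma^c$ with the total homotopy fiber of the $m$-cube $\mathcal{X}\colon R\mapsto\Gamma\bigl(\del_R{\vec{U}\choose\vec{\jmath}}\bigr)$ (with $\mathcal{X}(\emptyset)=\Gamma{\vec{U}\choose\vec{\jmath}}$), and using that $T_{\vec{k}}$ is itself a homotopy limit and so commutes with total fibers, the task reduces to showing $\tfiber(T_{\vec{k}}\mathcal{X})\simeq\ast$. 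Combining \refL{tjrlemma}, \refP{iteratedvecT}, and \refT{MultiTkpoly}(2) will yield the key identification
$$T_{\vec{k}}\,\Gamma\bigl(\del_R{\vec{U}\choose\vec{\jmath}}\bigr)\ \simeq\ T_{\min\{\vec{k},\vec{\jmath}_R\}}\,\Gamma{\vec{U}\choose\vec{\jmath}},$$
so $T_{\vec{k}}\mathcal{X}$ rewrites as $R\mapsto T_{\vec{k}_R}\Gamma{\vec{U}\choose\vec{\jmath}}$ with $\vec{k}_R:=\min\{\vec{k},\vec{\jmath}_R\}$ (and $\vec{k}_{\emptyset}=\vec{k}$).

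To kill the total fiber I will pick an index $i$ with $k_i<j_i$ (which exists because $\vec{k}<\vec{\jmath}$) and check coordinatewise that $\vec{k}_R=\vec{k}_{R\cup\{i\}}$ for every $R\subseteq\underline{m}\setminus\{i\}$: in the $i$-th slot both minima equal $\min\{k_i,j_i-1\}=k_i$ since $k_i<j_i$, while in every other slot $\vec{\jmath}_R$ and $\vec{\jmath}_{R\cup\{i\}}$ agree by \refD{jsubr}. Consequently every edge of $T_{\vec{k}}\mathcal{X}$ parallel to the $i$-th axis is an identity, and by iterating \refP{IteratedHofiber} the total fiber of such a cube is contractible. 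The weaker requirement $\holim_{\vec{k}<\vec{\jmath}}T_{\vec{k}}\Gamma^c\simeq\ast$ of \refD{MultiHomogeneous} then follows immediately. The main obstacle is the logical dependence on the still-to-come \refP{mcubehomog}; once that cube description is in hand, the argument is a clean reduction to the combinatorial identity $\vec{k}_R=\vec{k}_{R\cup\{i\}}$.
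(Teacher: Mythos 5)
Your proof is correct, but it is organized differently from the paper's. The paper's (one-line) argument uses \refL{sectionspoly}, \refL{germsofsectionspoly} together with \refT{MultiTkpoly}(2), \refL{tjrlemma}, and \refP{mcubehomog} to identify $\Gamma^c\bigl({\vec{U}\choose\vec{\jmath}},Z;p\bigr)$ with the layer $L_{\vec{\jmath}}\Gamma=\hofiber\bigl(T_{\vec{\jmath}}\Gamma\to\holim_{\vec{k}<\vec{\jmath}}T_{\vec{k}}\Gamma\bigr)$ of the section functor, and then homogeneity is the already-established \refP{multilayerhomog}; here \refP{mcubehomog} is exactly what translates the $R$-indexed cube of \refD{homogdegreej} into the $\holim_{\vec{k}<\vec{\jmath}}$ appearing in \refD{MultiHomogeneous}. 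You instead verify \refD{MultiHomogeneous} directly and prove the stronger statement that $T_{\vec{k}}\Gamma^c\simeq\ast$ for each individual $\vec{k}<\vec{\jmath}$, via the coordinatewise identity $\min\{\vec{k},\vec{\jmath}_R\}=\min\{\vec{k},\vec{\jmath}_{R\cup\{i\}}\}$ for any $i$ with $k_i<j_i$; this is essentially the cube manipulation the paper only carries out later, in \refC{homogalt}, and there only after the classification theorem, so your route yields that corollary for section-space functors without \refT{multihomogclass}, at the cost of being longer than the paper's appeal to \refP{multilayerhomog}. Two small repairs: first, \refP{mcubehomog} is not what identifies $\Gamma^c$ with the total fiber of the cube $R\mapsto\Gamma\bigl(\del_R{\vec{U}\choose\vec{\jmath}}\bigr)$ -- that is just \refD{homogdegreej} together with the definition of $\tfiber$ -- and in fact your argument never needs \refP{mcubehomog} at all, since a homotopy limit of contractible spaces is contractible; second, the edges parallel to the $i$-th direction are not literally identities but become equivalences only after the zigzag $\Gamma\bigl(\del_R{\vec{U}\choose\vec{\jmath}}\bigr)\simeq T_{\vec{\jmath}_R}\Gamma\bigl(\del_R{\vec{U}\choose\vec{\jmath}}\bigr)\simeq T_{\vec{\jmath}_R}\Gamma{\vec{U}\choose\vec{\jmath}}$, so you should note that this zigzag is natural in $R$ (all maps are canonical restriction and tower maps) and then check that each edge $T_{\vec{k}}T_{\vec{\jmath}_R}\Gamma\to T_{\vec{k}}T_{\vec{\jmath}_{R\cup\{i\}}}\Gamma$ is an equivalence, e.g.\ by working one variable at a time via \refP{holimproduct} and applying a two-out-of-three argument in the $i$-th variable, where $k_i\leq j_i-1$ makes both canonical maps out of the $i$-th single-variable approximation equivalences; this is the same level of care the paper itself exercises in the proof of \refC{homogalt}.
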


\begin{proof}
This follows immediately from \refL{sectionspoly}, \refL{tjrlemma}, and \refP{mcubehomog}.
\end{proof}


\subsection{Classification of homogeneous functors}\label{S:Classification}


In this section we will prove the following analog of \refT{homogclass}.

\begin{thm}\label{T:multihomogclass}
If $E$ is homogeneous of degree $\leq \vec{\jmath}$, then there is an equivalence, natural in $\vec{U}\in\mathcal{O}(\vec{P})$,

$$E(\vec{U})\longrightarrow\hofiber\left(\Gamma\left({\vec{U}\choose\vec{\jmath}},Z;p\right)\longrightarrow\underset{R\neq\emptyset}{\holim}\, \Gamma\left(\del_R{\vec{U}\choose\vec{\jmath}},Z;p\right)\right).$$

Here $p:Z\rightarrow{\vec{P}\choose\vec{\jmath}}$ is some fibration with a preferred section defined near the fat diagonal.
\end{thm}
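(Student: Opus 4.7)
The plan is to follow the strategy of the single-variable classification \refT{homogclass}, now adapted to the multivariable setting using the tools developed in \refS{Sections} together with the observation that $L_{\vec{\jmath}}F$ arises as a total homotopy fiber of an $m$-cube of functors (\refP{mcubehomog}, as mentioned in \refR{MultiLayerCube}).

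First I would construct the classifying fibration $p \colon Z \to {\vec{P} \choose \vec{\jmath}}$ together with the natural transformation. For a configuration $\vec{S} = (S_1, \ldots, S_m)$ with $|S_i| = j_i$, define the fiber of $p$ over $\vec{S}$ to be the total homotopy fiber of the $|\vec{\jmath}|$-cube $\vec{T} \mapsto E(\vec{V}(\vec{T}))$, where $\vec{T}$ ranges over componentwise subsets of $\vec{S}$ and $\vec{V}(\vec{T})$ is a disjoint union of small open balls, one around each point of each $T_i$. The basepoint of $E(\vec{P})$ provides a canonical null-element whenever two of the chosen balls are forced to merge, giving the preferred section in a neighborhood of $\Delta_{\vec{fat}}$. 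The natural transformation sends $x \in E(\vec{U})$ to the section $\vec{S} \mapsto [\text{image of } x]$ obtained by restricting through $E(\vec{U}) \to E(\vec{V}(\vec{T}))$; that it lands in $\Gamma^c$, i.e.\ in sections agreeing with the preferred one near $\Delta_{\vec{fat}}$, follows from compatibility with the basepoint in $E(\vec{P})$.

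The heart of the proof is then that both source and target of the asserted map are homogeneous of degree $\vec{\jmath}$. For the target this is \refP{sectionshomog}; for the source it is the hypothesis. By \refT{Multipolyclass} it suffices to verify the natural transformation is an equivalence on $\vec{U} \in \mathcal{O}_{\vec{\jmath}}(\vec{P})$. When some $U_i$ contains strictly fewer than $j_i$ balls, ${\vec{U} \choose \vec{\jmath}}$ contains no configurations of the required multi-size, so the target is contractible, while the source is contractible by the homogeneity of $E$ (via \refC{homogalt}). When $\vec{U}$ consists of exactly $\vec{\jmath}$ disjoint balls, ${\vec{U} \choose \vec{\jmath}}$ collapses (up to the symmetric quotient) to a single configuration, so the section space collapses to the fiber of $p$ over it, and this fiber is precisely $\tfiber$ of the cube of values of $E$ on sub-collections of the balls. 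Via \refP{mcubehomog}, this $\tfiber$ is identified with $L_{\vec{\jmath}}E(\vec{U}) \simeq E(\vec{U})$ under the homogeneity assumption, and tracing the construction shows the natural transformation realises this identification.

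I expect the main obstacle to be the rigorous construction of $Z \to {\vec{P} \choose \vec{\jmath}}$ in a way that is functorial in $\vec{U}$ and equivariant with respect to $\Sigma_{j_1} \times \cdots \times \Sigma_{j_m}$, together with the verification that the preferred section extends consistently over degenerate configurations. A second subtle point is matching the multi-diagonal decomposition $\Delta_{\vec{fat}} = \cup_i \Delta_{fat_i}$ used in \refD{homogdegreej} with the iterated-fiber decomposition of $L_{\vec{\jmath}}E$ coming from viewing it as the total fiber of an $m$-cube; both ultimately express the same ``mixed partial derivative'' of $E$, but exhibiting this identification naturally in $\vec{U}$ is what makes the classification actually sharp.
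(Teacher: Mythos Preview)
Your overall strategy---show both source and target are homogeneous of degree $\vec{\jmath}$ and then invoke \refT{Multipolyclass} to reduce to $\vec{U}\in\mathcal{O}_{\vec{\jmath}}(\vec{P})$---matches the paper's. The real content, as you correctly anticipate, is the construction of the fibration $p:Z\to{\vec{P}\choose\vec{\jmath}}$ together with a natural transformation $E\to\Gamma$, and here your proposal diverges from the paper and leaves a genuine gap.

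You propose to build $Z$ fiberwise, declaring the fiber over $\vec{S}$ to be a certain total homotopy fiber, and then to produce the natural transformation by restriction. You acknowledge that assembling these fibers into an honest fibration, functorially in $\vec{U}$ and equivariantly for $\prod_i\Sigma_{j_i}$, is the main obstacle---and indeed this is where your argument stops short. The paper avoids this obstacle entirely by a categorical trick: it introduces the category $\mathcal{I}^{(\vec{\jmath})}(\vec{U})$ of tuples of exactly $j_i$ disjoint balls with isotopy-equivalence inclusions, observes (\refL{catconfig}) that $|\mathcal{I}^{(\vec{\jmath})}(\vec{U})|\simeq{\vec{U}\choose\vec{\jmath}}$, and then applies Dwyer's quasifibration lemma (\refL{quasifiberlemma}) to the functor $E$ restricted to this category. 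This yields at once the total space $Z\simeq\hocolim_{\mathcal{I}^{(\vec{\jmath})}(\vec{U})}E$, the fibration over the configuration space, and the identification of $\Gamma$ with $\holim_{\mathcal{I}^{(\vec{\jmath})}(\vec{U})}E$---from which the natural transformation $E\to\Gamma$ is immediate. No gluing, no equivariance bookkeeping, and naturality in $\vec{U}$ comes for free from the pullback square of hocolims. Moreover, the fibers of $p$ are then simply the values $E(\vec{V})$ on tubular neighborhoods, and \refP{homogfibers} handles the comparison; your description of the fiber as a $|\vec{\jmath}|$-cube total fiber is the content of \refT{fibersclassfib}, which is proved \emph{after} the classification, not used in it.

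One further issue: your appeal to \refC{homogalt} to handle the case where $\vec{U}$ has fewer than $j_i$ balls in some slot is circular---in the paper that corollary is deduced \emph{from} \refT{multihomogclass}. The paper instead argues directly with the square comparing $E\to\holim_{\vec{k}<\vec{\jmath}}T_{\vec{k}}E$ and $F\to\holim_{\vec{k}<\vec{\jmath}}T_{\vec{k}}F$, using \refL{tjrlemma} and \refP{homogfibers} to control the right-hand column and the homogeneity hypothesis directly on the left.
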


We will call $p$ the \emph{classifying fibration} for $E$.


As promised in the remark following \refD{MultiHomogeneous}, we have the following corollary.

\begin{cor}\label{C:homogalt}
If $E$ is homogeneous of degree $\vec{\jmath}$, then for all $\vec{k}<\vec{j}$ and for all $\vec{U}$, $T_{\vec{k}}E(\vec{U})$ is contractible.
\end{cor}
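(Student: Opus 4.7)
The plan is to invoke the classification theorem \refT{multihomogclass} to realize $E(\vec{V})$ as the total homotopy fiber of a specific $m$-cube of section spaces, then show this total fiber is pointwise contractible for every $\vec{V}\in\mathcal{O}_{\vec{k}}(\vec{U})$. Since $\mathcal{O}_{\vec{k}}(\vec{U})$ is a nonempty category and $T_{\vec{k}}E(\vec{U})$ is a homotopy limit over it, pointwise contractibility of $E$ on this category forces $T_{\vec{k}}E(\vec{U})\simeq\ast$.

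First I would fix $\vec{k}<\vec{\jmath}$ and choose $i\in\underline{m}$ with $k_i<j_i$. For any $\vec{V}\in\mathcal{O}_{\vec{k}}(\vec{U})$, the component $V_i$ is a disjoint union of at most $k_i<j_i$ open balls, so by goodness of $E$ I may replace $V_i$ with a concentric shrinking $V_i'\subset V_i$ via an isotopy equivalence and assume $V_i$ is arbitrarily small. The pigeonhole principle then forces every $j_i$-point configuration in $V_i$ to have two points within any preassigned distance of each other, so the entire configuration space ${V_i\choose j_i}$ sits inside any prescribed neighborhood of the fat diagonal $\Delta_{fat_i}$ in $sp_{j_i}P_i$.

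Using \refT{multihomogclass}, $E(\vec{V})$ is the total homotopy fiber of the $m$-cube $\mathcal{X}$ with $\mathcal{X}(\emptyset)=\Gamma({\vec{V}\choose\vec{\jmath}},Z;p)$ and $\mathcal{X}(R)=\Gamma(\del_R{\vec{V}\choose\vec{\jmath}},Z;p)$ for $R\neq\emptyset$. The previous paragraph shows that for every $R\subseteq\underline{m}$ with $i\notin R$, the face-map $\mathcal{X}(R)\to\mathcal{X}(R\cup\{i\})$ is a homotopy equivalence: the additional ``germ near $\Delta_{fat_i}$'' constraint is vacuous, since a cofinal subsystem of $\mathcal{N}_{R\cup\{i\}}$ (whose elements have $i$-th factor containing ${V_i\choose j_i}$) computes the same hocolim as $\mathcal{N}_R$. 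Viewing $\mathcal{X}$ as a natural transformation of two $(m-1)$-cubes in the $i$-direction and applying \refP{IteratedHofiber}, this levelwise equivalence forces $\tfiber(\mathcal{X})\simeq\ast$, so $E(\vec{V})\simeq\ast$ as required.

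The main obstacle will be carefully justifying the cofinality argument in the third paragraph---namely, exhibiting the subposet of $\mathcal{N}_{R\cup\{i\}}$ whose elements have $i$-th factor containing ${V_i\choose j_i}$ as cofinal, and showing that the restricted hocolim recovers $\Gamma(\del_R{\vec{V}\choose\vec{\jmath}},Z;p)$.
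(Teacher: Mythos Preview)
Your overall strategy---use \refT{multihomogclass} to realize $E(\vec{V})$ as the total fiber of the $m$-cube $R\mapsto\Gamma(\del_R{\vec{V}\choose\vec{\jmath}})$ and then show this cube degenerates in the $i$-th direction---is reasonable, but the cofinality claim that carries the argument is false as stated. You assert that the subposet of $\mathcal{N}_{R\cup\{i\}}$ consisting of those $Q$ whose $i$-th factor contains ${V_i\choose j_i}$ is cofinal. The hocolim is taken over $\mathcal{N}_{R\cup\{i\}}$ with morphisms going to \emph{smaller} neighborhoods (restriction of sections), so cofinality requires that every $Q\in\mathcal{N}_{R\cup\{i\}}$ contain some $Q'$ from your subposet. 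But no matter how far you have shrunk $V_i$, it is still a nonempty open set, so ${V_i\choose j_i}$ is a nonempty open subset of ${P_i\choose j_i}$ disjoint from $\Delta_{fat}$; there will always be neighborhoods $N_i$ of $\Delta_{fat}$ small enough that $N_i\cap{P_i\choose j_i}$ fails to contain ${V_i\choose j_i}$, and no smaller neighborhood will do better. Your phrase ``assume $V_i$ is arbitrarily small'' hides the issue: you must fix $V_i$ \emph{before} you look at the whole filtered system $\mathcal{N}_{R\cup\{i\}}$, and no fixed $V_i$ works for all $N_i$ simultaneously.

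The paper avoids this by arguing entirely at the level of Taylor approximations rather than with the explicit hocolim model. Using \refL{tjrlemma} to identify $\Gamma(\del_R{\vec{U}\choose\vec{\jmath}})\simeq T_{\vec{\jmath}_R}\Gamma$, it suffices to show that the $m$-cube $R\mapsto T_{\vec{k}}T_{\vec{\jmath}_R}\Gamma$ is homotopy cartesian. By \refP{iteratedvecT}, $T_{\vec{k}}T_{\vec{\jmath}_R}\simeq T_{\min\{\vec{k},\vec{\jmath}_R\}}$, and a short case analysis (depending on whether $\vec{k}<\vec{\jmath}_{\underline{m}}$ or $\vec{k}=\vec{\jmath}_S$ for some $S$) shows the resulting cube is either constant or a map of identical $(m-1)$-cubes. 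If you want to rescue your geometric approach, the right move is not cofinality but rather to use \refL{regnbhd} to replace each hocolim by a single regular neighborhood and then argue that the inclusion ${\vec{V}\choose\vec{\jmath}}\cap L_{R\cup\{i\}}\hookrightarrow{\vec{V}\choose\vec{\jmath}}\cap L_R$ is an isotopy equivalence when $V_i$ has fewer than $j_i$ components; this is essentially what is buried in the proof of \refL{germsofsectionspoly}, and is not easier than the paper's route.
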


\begin{proof}
From \refT{multihomogclass} and \refL{tjrlemma} it follows that that if $E$ is homogeneous of degree $\vec{\jmath}$, then for all $\vec{k}<\vec{j}$ and for all $\vec{U}$, $T_{\vec{k}}E(\vec{U})$ is contractible. To establish this, it is sufficient to show that for all $\vec{k}<\vec{\jmath}$, $T_{\vec{k}}\holim_{R\neq\emptyset}T_{\vec{\jmath}_R}\Gamma$ is equivalent to $T_{\vec{k}}\Gamma$. Equivalently, it is enough to show that the $m$-cube $R\mapsto T_{\vec{k}}T_{\vec{\jmath}_R}\Gamma$ is homotopy cartesian.

For $\vec{k}<\vec{\jmath}_{\underline{m}}$, $T_{\vec{k}}T_{\vec{\jmath}_R}E\simeq T_{\vec{k}}E$ for all $R\neq\emptyset$ by \refP{iteratedvecT}, and it is clear that the $m$-cube in question is homotopy cartesian. For $\vec{k}\geq\vec{\jmath}_{\underline{m}}$, we must have $\vec{k}=\vec{\jmath}_S$ for some $S\subset\underline{m}$. By inspection, it is clear that $T_{\vec{\jmath}_S}T_{\vec{\jmath}_R}\Gamma\simeq T_{\vec{\jmath}_{R\cup S}}$. Then the $m$-cube $R\mapsto T_{\vec{\jmath}_{R\cup S}}\Gamma$ is homotopy cartesian as follows. For any $s\in S$, let $U$ range through subsets of $\underline{m}-\{s\}$. Then then $m$-cube $R\mapsto T_{\vec{\jmath}_{R\cup S}}\Gamma$ can be expressed as a map of $(m-1)$-cubes 
$$
(U\mapsto T_{\vec{\jmath}_{U\cup S}}\Gamma)\longrightarrow (U\mapsto T_{\vec{\jmath}_{U\cup\{s\}\cup S}}\Gamma)
$$
which are clearly identical.
\end{proof}


Before we embark on the proof of \refT{multihomogclass}, we need to discuss a few technical constructions.

\begin{defin}
For $\vec{\jmath}=(j_1,\ldots, j_m)$, and $\vec{U}=(U_1,\ldots, U_m)\in\mathcal{O}(\vec{P})$, let $\mathcal{I}^{(\vec{\jmath})}(\vec{U})$ denote the category whose objects are open sets $\vec{B}=(B_1,\ldots, B_m)$ such that $B_i$ is diffeomorphic to exactly $j_i$ disjoint open balls in $U_i$. The morphisms are the inclusion maps which are isotopy equivalences.
\end{defin}

\begin{lemma}\label{L:catconfig}
There is an equivalence, natural in $\vec{U}$, 
$$\abs{\mathcal{I}^{(\vec{\jmath})}(\vec{U})}\simeq {\vec{U}\choose \vec{\jmath}}.
$$
\end{lemma}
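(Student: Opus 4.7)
The plan is to reduce this statement to the single-variable version, which is the analogous result in \cite{W:EI1}. The first observation is that the category $\mathcal{I}^{(\vec{\jmath})}(\vec{U})$ splits as a product of single-variable categories. Indeed, an object $\vec{B}=(B_1,\ldots,B_m)$ of $\mathcal{I}^{(\vec{\jmath})}(\vec{U})$ is precisely a tuple with $B_i\in\mathcal{I}^{(j_i)}(U_i)$, and a morphism $\vec{B}\hookrightarrow\vec{B}'$ is an inclusion that is an isotopy equivalence if and only if each component $B_i\hookrightarrow B_i'$ is an inclusion and an isotopy equivalence (since isotopies can be assembled and restricted one component at a time, using the equivalence of categories from \eqref{E:EquivCateg}). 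Thus there is an isomorphism of categories
\[
\mathcal{I}^{(\vec{\jmath})}(\vec{U})\;\cong\;\prod_{i=1}^{m}\mathcal{I}^{(j_i)}(U_i).
\]

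Next, I would use the standard fact that the nerve functor commutes with finite products, so taking geometric realization gives a natural homeomorphism
\[
\bigl|\mathcal{I}^{(\vec{\jmath})}(\vec{U})\bigr|\;\cong\;\prod_{i=1}^{m}\bigl|\mathcal{I}^{(j_i)}(U_i)\bigr|.
\]
Now I would invoke the single-variable version of the lemma (Weiss \cite{W:EI1}), which provides for each $i$ an equivalence $\bigl|\mathcal{I}^{(j_i)}(U_i)\bigr|\simeq\binom{U_i}{j_i}$, natural in $U_i$. Taking the product of these equivalences across $i=1,\ldots,m$ and recalling from \refS{Conventions} that $\binom{\vec{U}}{\vec{\jmath}}=\prod_i\binom{U_i}{j_i}$ yields
\[
\bigl|\mathcal{I}^{(\vec{\jmath})}(\vec{U})\bigr|\;\simeq\;\prod_{i=1}^{m}\binom{U_i}{j_i}\;=\;\binom{\vec{U}}{\vec{\jmath}}.
\]
Naturality in $\vec{U}$ follows from naturality in each $U_i$ separately, since inclusions of tuples act coordinatewise.

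There is essentially no obstacle here: the only nontrivial ingredient is the single-variable equivalence from \cite{W:EI1}, and everything else is formal from the product decomposition of the category. The one small point worth verifying carefully is that the product decomposition of $\mathcal{I}^{(\vec{\jmath})}(\vec{U})$ respects the isotopy-equivalence condition on morphisms, but this is immediate from the disjoint-union structure of $\coprod_i P_i$.
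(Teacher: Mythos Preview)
Your proof is correct. The paper's own proof is a one-line instruction to ``adapt the proof of Lemma 3.5 of \cite{W:EI1}'', i.e.\ to rerun Weiss's single-variable argument in the multivariable setting. Your route is slightly different and arguably cleaner: rather than adapting the internal argument, you observe the product decomposition $\mathcal{I}^{(\vec{\jmath})}(\vec{U})\cong\prod_i\mathcal{I}^{(j_i)}(U_i)$, use that realization of nerves commutes with finite products, and then invoke the single-variable result as a black box. This buys you a shorter proof that does not require revisiting any of Weiss's construction; the paper's phrasing would in principle have you redo that construction componentwise. Both approaches rest on the same single-variable lemma, so the difference is one of packaging rather than substance.
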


\begin{proof}
Adapt the proof of Lemma 3.5 of \cite{W:EI1}.
\end{proof}

Recall, that $\mathcal{Z}^m_{\leq \vec{\jmath}}$ is the subposet of $\mathcal{Z}^m$ consisting of all $\vec{k}$ such that $\vec{k}\leq\vec{\jmath}$, and that $\mathcal{Z}^m_{< \vec{\jmath}}$ is the subposet where $\vec{k}<\vec{\jmath}$. Recall \refD{jsubr}.

\begin{prop}\label{P:mcubehomog}
There is an equivalence of functors

$$\underset{\vec{k}<\vec{\jmath}}{\holim}\, T_{\vec{k}}F\simeq\underset{R\neq\emptyset}{\holim}\, T_{\vec{\jmath}_R}F.$$
\end{prop}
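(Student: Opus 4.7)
The plan is to deduce this equivalence directly from \refL{posetideals} by exhibiting an appropriate cover of the poset $\mathcal{Z}^m_{<\vec{\jmath}}$ by ideals indexed on the singletons of $\underline{m}$.

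First, I would define, for each $i\in\underline{m}$, the subposet $\mathcal{Q}_i := \mathcal{Z}^m_{\leq\vec{\jmath}_{\{i\}}} \subset \mathcal{Z}^m_{<\vec{\jmath}}$, consisting of those $\vec{k}$ with $k_i\leq j_i-1$ and $k_l\leq j_l$ for $l\neq i$. Each $\mathcal{Q}_i$ is closed under passing to smaller elements, hence is an ideal. Any $\vec{k}<\vec{\jmath}$ must have $k_l\leq j_l-1$ for at least one coordinate $l$, so $\vec{k}\in\mathcal{Q}_l$; therefore the $\mathcal{Q}_i$ cover $\mathcal{Z}^m_{<\vec{\jmath}}$. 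The combinatorial core of the argument is then the identity
$$\mathcal{Q}_R := \bigcap_{i\in R}\mathcal{Q}_i \;=\; \mathcal{Z}^m_{\leq\vec{\jmath}_R} \quad \text{for }\emptyset\neq R\subset\underline{m},$$
which is immediate from \refD{jsubr}: an element lies in every $\mathcal{Q}_i$ for $i\in R$ precisely when its $i$-th coordinate is at most $j_i-1$ for $i\in R$ and at most $j_i$ otherwise.

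Next, since $\vec{\jmath}_R$ is the maximum of $\mathcal{Q}_R = \mathcal{Z}^m_{\leq\vec{\jmath}_R}$ and $T_\bullet F$ is contravariant, $\vec{\jmath}_R$ is terminal for the diagram $T_\bullet F|_{\mathcal{Q}_R}$, so $\holim_{\mathcal{Q}_R} T_\bullet F \simeq T_{\vec{\jmath}_R} F$. Feeding this into \refL{posetideals} yields
$$\underset{\vec{k}<\vec{\jmath}}{\holim}\, T_{\vec{k}}F \;\simeq\; \underset{R\neq\emptyset}{\holim}\,\underset{\mathcal{Q}_R}{\holim}\, T_{\bullet}F \;\simeq\; \underset{R\neq\emptyset}{\holim}\, T_{\vec{\jmath}_R}F,$$
which is the claim. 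Since the argument reduces to unpacking definitions and invoking a lemma already established, I do not foresee a serious obstacle; the only substantive step is verifying the ideal-intersection identity $\mathcal{Q}_R = \mathcal{Z}^m_{\leq\vec{\jmath}_R}$, which is forced by the very definition of $\vec{\jmath}_R$.
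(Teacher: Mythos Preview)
Your proposal is correct and follows exactly the paper's approach: the paper's proof is the one-line assertion that the result is immediate from \refL{posetideals} because the $\mathcal{Z}^{m}_{\leq\vec{\jmath}_{\{i\}}}$ are ideals covering $\mathcal{Z}^{m}_{<\vec{\jmath}}$, and you have simply unpacked this by identifying the intersections $\mathcal{Q}_R=\mathcal{Z}^m_{\leq\vec{\jmath}_R}$ and using that $\vec{\jmath}_R$ is final there. There is nothing to add.
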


\begin{proof}
This follows immediately from \refL{posetideals}, as 
$\mathcal{Z}^{m}_{\leq\vec{\jmath}_{\{i\}}}$ are all clearly ideals whose union covers 
$\mathcal{Z}^{m}_{<\vec{\jmath}}$.
\end{proof}

\begin{rem}  As promised in Remark \ref{R:MultiLayerCube}, this proposition says that the $\vec{\jmath}^{th}$ layer of the Taylor multi-tower from \refD{MultiLayer} is equivalent to the total homotopy fiber of a certain cubical diagram.  This is in complete analogy to the discussion at the beginning of Section \ref{S:multihomog} regarding homogeneous polynomials in ordinary multivariable calculus.
\end{rem}

\begin{prop}\label{P:homogfibers}

Let $p:Z\rightarrow {\vec{P}\choose\vec{\jmath}}$ be a fibration. Let 
$$
E(\vec{U})=\hofiber\left(\Gamma{\vec{U}\choose\vec{\jmath}}\longrightarrow\underset{R\neq\emptyset}{\holim}\;\Gamma\left(\del_R{\vec{U}\choose\vec{\jmath}}\right)\right).
$$ 
Suppose $\vec{V}$ is a tubular neighborhood of $\vec{S}\subset\vec{P}$, where $\vec{S}$ contains $j_i$ elements in the $i^{th}$ variable. Then the composed map

$$E(\vec{V})\longrightarrow\Gamma{\vec{V}\choose\vec{\jmath}}\stackrel{eval_{\vec{S}}}{\longrightarrow}p^{-1}(\vec{S})$$

is an equivalence.
\end{prop}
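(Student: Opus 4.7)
The plan is to exploit the product structure of ${\vec V\choose \vec\jmath}$ in order to isolate the connected component containing $\vec S$, and to show that every other component is absorbed by the boundary term $\holim_{R\neq\emptyset}\Gamma(\del_R{\vec V\choose\vec\jmath},Z;p)$.

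First I would analyze one coordinate at a time. Since $V_i$ is diffeomorphic to $j_i$ disjoint open balls, one around each point of $S_i$, the unordered configuration space ${V_i\choose j_i}$ decomposes as $A_i\sqcup B_i$, where $A_i$ is the component of configurations with exactly one point in each ball and $B_i$ is the union of the remaining components. The two geometric facts I will exploit are: $A_i$ is contractible and deformation retracts onto $S_i$; and by choosing $V_i$ small enough (harmless by isotopy invariance of the good functor $E$), $B_i$ is contained in any preassigned neighborhood of $\Delta_{fat}(sp_{j_i}V_i)$. I will assume throughout that $\vec V$ has been shrunk enough for the latter to hold simultaneously in all coordinates.

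Passing to the product yields
\[
{\vec V\choose\vec\jmath}=\bigsqcup_{T\subset\underline{m}}C_T,\qquad C_T=\prod_{i\in T}A_i\times\prod_{i\notin T}B_i.
\]
Because $\Gamma(-,Z;p)$ turns disjoint unions into products and, via \refP{multibdysections}, each $\Gamma(\del_R{\vec V\choose\vec\jmath},Z;p)$ admits a compatible product decomposition, the map
\[
\Gamma({\vec V\choose\vec\jmath},Z;p)\longrightarrow\underset{R\neq\emptyset}{\holim}\,\Gamma(\del_R{\vec V\choose\vec\jmath},Z;p)
\]
is a product of maps indexed by $T\subset\underline{m}$. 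Consequently $E(\vec V)$ splits as $\prod_T E_T$, where $E_T$ is the homotopy fiber of the $T$-indexed factor.

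The heart of the argument is to verify that $E_T\simeq\ast$ for every $T\subsetneq\underline{m}$. Fix $i\notin T$: for every $R$ containing $i$, the intersection $\del_R{\vec V\choose\vec\jmath}\cap C_T$ already captures all of $C_T$, so by \refL{regnbhd} and \refL{regnbhdcofinal} (applied in the $i$th coordinate) the restriction $\Gamma(C_T,Z;p)\to\hocolim_{Q\in\mathcal{N}_R}\Gamma({\vec V\choose\vec\jmath}\cap Q\cap C_T,Z;p)$ is an equivalence; for every $R$ meeting $T$ the intersection $\del_R{\vec V\choose\vec\jmath}\cap C_T$ is empty, because $A_r$ is disjoint from $\Delta_{fat_r}$ for each $r\in T$. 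Organizing these pieces into the diagram over $\mathcal{P}_0(\underline{m})$ and peeling off one variable at a time by \refP{IteratedHofiber}, the $T$-indexed factor of the defining map becomes an equivalence, whence $E_T\simeq\ast$.

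This leaves only $E_{\underline{m}}$. The component $C_{\underline{m}}=\prod_iA_i$ lies off every $\Delta_{fat_i}$, so it contributes nothing to any $\Gamma(\del_R{\vec V\choose\vec\jmath},Z;p)$ with $R\neq\emptyset$ and $E_{\underline{m}}\simeq\Gamma(\prod_iA_i,Z;p)$. Since $\prod_iA_i$ is contractible with deformation retract $\vec S$, evaluation at $\vec S$ gives $\Gamma(\prod_iA_i,Z;p)\simeq p^{-1}(\vec S)$, and this identification is by construction the composition in the statement. The main obstacle I expect to encounter is the iterated cofinality bookkeeping that establishes $E_T\simeq\ast$; the strategy is to reduce it, via \refP{IteratedHofiber} and \refP{multibdysections}, to the single-variable case already treated in \cite{W:EI1} by handling one coordinate $i\notin T$ at a time.
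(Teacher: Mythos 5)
Your overall architecture---splitting ${\vec V\choose\vec\jmath}$ into the clopen pieces $C_T$, arguing that the factors $E_T$ with $T\subsetneq\underline{m}$ are contractible, and identifying $E_{\underline{m}}\simeq\Gamma(\prod_iA_i)\simeq p^{-1}(\vec S)$ via evaluation---is the right shape, and it is essentially the shape of the argument the paper invokes by citing Weiss's Proposition 8.4 (the paper's own proof is just that citation). The problem is that your key step, $E_T\simeq\ast$ for $T\subsetneq\underline{m}$, rests on a claim that is false in the way you use it. You say that after shrinking $\vec V$, each $B_i$ lies in ``any preassigned neighborhood'' of $\Delta_{fat}$, and conclude that $\del_R{\vec V\choose\vec\jmath}\cap C_T$ ``captures all of $C_T$'' whenever $i\in R$. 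The quantifiers go the wrong way: $\Gamma(\del_R{\vec V\choose\vec\jmath})$ is a homotopy colimit over \emph{all} $Q\in\mathcal{N}_R$, and $\vec V$ must be fixed before that colimit is formed, since $E(\vec V)$ is the value of the functor at that $\vec V$. Once $\vec V$ is fixed, the neighborhoods $Q$ eventually become thinner than $B_i$: a configuration with two points in the same ball at distance comparable to the ball's radius lies outside every sufficiently small $Q$, so $C_T\cap Q\subsetneq C_T$ cofinally. \refL{regnbhdcofinal} and \refL{regnbhd} only allow you to replace the colimit by sections over $C_T\cap L$ for a single regular neighborhood $L$; they do not say these agree with sections over $C_T$. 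What is actually needed---and this is precisely the geometric content one must adapt from Weiss---is that for $i\notin T$ the inclusion $C_T\cap Q\hookrightarrow C_T$ is a homotopy equivalence for cofinally many $Q$ (configurations in $B_i$ can be pushed toward the fat diagonal, e.g.\ by scaling toward the barycenter inside each ball containing at least two points, a deformation preserving the near-diagonal locus), whence the germ space agrees with the full section space on those pieces. That input is missing from your proof and is not a cosmetic omission: it is the heart of the proposition.

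There is also an inconsistency in your case analysis over $R$: for $R$ containing both your fixed $i\notin T$ and some $r\in T$ you assert simultaneously that the restriction map is an equivalence (so the $R$-term is $\Gamma(C_T)$) and that the intersection is empty (so the $R$-term is a point); these conflict unless $\Gamma(C_T)$ is contractible. The correct dichotomy is: if $R\cap T\neq\emptyset$, the $T$-factor of $\Gamma(\del_R)$ is contractible (cofinally $A_r\cap Q_r=\emptyset$ for $r\in R\cap T$; even this holds only cofinally in $Q$, since large neighborhoods of $\Delta_{fat}$ do meet $A_r$), while if $R\cap T=\emptyset$ it is equivalent to $\Gamma(C_T)$ by the homotopy-equivalence-of-inclusions fact above. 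With that dichotomy in hand, $E_T\simeq\ast$ follows by viewing the $R$-cube as a map of $(m-1)$-cubes in a direction $i\notin T$ and checking that each component of that map is an equivalence. Your treatment of the product splitting via filtered colimits, and of the top piece $T=\underline{m}$, is fine modulo the same cofinality caveat.
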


\begin{proof}
This is a straightforward adaptation of \cite[Proposition 8.4]{W:EI1}.
\end{proof}

This proposition says that we can describe the fibers of the fibration in terms of $E$ as the values $E(\vec{V})$ for a tubular neighborhood $\vec{V}$ of $\vec{S}$ in $\vec{P}$.

The following useful technical lemma will be used in the proof of the classification theorem (\refT{multihomogclass}) to produce a space of sections functor from a homogeneous one.

\begin{lemma}[\cite{Dwyer:BG}, Lemma 3.12]\label{L:quasifiberlemma}
Suppose $X:\mathcal{C}\rightarrow \Spaces$ is a functor which takes all morphisms to homotopy equivalences. Then $\hocolim_{\mathcal{C}}X$ quasifibers over $\abs{\mathcal{C}}$ and the associated space of sections is homotopy equivalent to $\holim_{\mathcal{C}}X$
\end{lemma}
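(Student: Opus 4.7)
The plan is to invoke the standard bar-construction model for the homotopy colimit. In that model, $\hocolim_{\mathcal{C}}X$ is the geometric realization of a simplicial space whose space of $n$-simplices is $\coprod_{c_0\to c_1\to\cdots\to c_n}X(c_0)$, and the classifying space $|\mathcal{C}|$ is the realization of the nerve, obtained by forgetting the $X$-coordinate. There is therefore a natural projection $\pi\colon\hocolim_{\mathcal{C}}X\to|\mathcal{C}|$ whose preimage over the open simplex corresponding to a chain $c_0\to\cdots\to c_n$ is $\mathring{\Delta}^n\times X(c_0)$, and whose fiber over a vertex $c$ is $X(c)$.

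First I would check that $\pi$ is a quasifibration. The key observation is that along any morphism $c_0\to c_n$ in a chain, the identification between the ``$X(c_0)$" and the ``$X(c_n)$" ends of the preimage of the chain is given by applying $X$ to that morphism, which by hypothesis is a homotopy equivalence. Filter $|\mathcal{C}|$ by its skeleta $|\mathcal{C}|^{(n)}$ and use the attaching-cell description $|\mathcal{C}|^{(n)}=|\mathcal{C}|^{(n-1)}\cup_{\sigma}\Delta^n$ one simplex at a time. Over each $\Delta^n$ the preimage is (up to equivalence) $\Delta^n\times X(c_0)$, a trivial fibration. Using the standard Dold–Thom–style criterion for recognizing quasifibrations via an open cover on which the map is already a quasifibration with equivalent transition maps, together with Dold's theorem that a map assembled from quasifibrations whose fiber inclusions are equivalences is itself a quasifibration, an induction on skeleta yields the conclusion. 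This is exactly the place where the hypothesis that $X$ carries every morphism to a homotopy equivalence is needed.

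Second I would identify the space of sections of $\pi$ with $\holim_{\mathcal{C}}X$. Here I would use the Bousfield–Kan cosimplicial model of $\holim_{\mathcal{C}}X$ as the totalization of the cosimplicial space with $n$-th term $\prod_{c_0\to\cdots\to c_n}X(c_0)$. A section of $\pi$ restricts, over the simplex labelled by the chain $c_0\to\cdots\to c_n$, to a map $\Delta^n\to X(c_0)$; the compatibility of a section under the face and degeneracy identifications in $|\mathcal{C}|$ is precisely the cosimplicial identity in the Bousfield–Kan formula. Matching the two side-by-side gives a natural homeomorphism between the section space and the totalization, hence the asserted homotopy equivalence.

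The main obstacle is the quasifibration step: since $|\mathcal{C}|$ is not locally Euclidean and the attached simplices overlap only along their boundaries, one cannot simply quote a local-triviality statement, and Dold's patching theorem must be invoked carefully (with a neighborhood-of-skeleton argument rather than a partition-of-unity one). Once that is handled the remaining identification with $\holim$ is formal.
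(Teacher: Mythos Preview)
The paper does not give its own proof of this lemma; it simply quotes it as \cite[Lemma~3.12]{Dwyer:BG} and uses it as a black box in the proof of \refT{multihomogclass}. So there is no ``paper's proof'' to compare your proposal against.

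That said, your outline is the standard argument and is essentially correct. The bar-construction model for $\hocolim_{\mathcal{C}}X$ together with the Dold--Thom patching criterion over the skeletal filtration of $\abs{\mathcal{C}}$ is exactly how one verifies the quasifibration claim, and the hypothesis on $X$ enters precisely where you say it does. Your identification of the section space of $\pi$ with the Bousfield--Kan totalization, and hence with $\holim_{\mathcal{C}}X$, is also the standard formal matching. One small point worth tightening: the phrase ``associated space of sections'' in the statement is most naturally read as sections of the associated (honest) fibration obtained by replacing $\pi$, not sections of $\pi$ itself. Your argument produces a homeomorphism between $\holim_{\mathcal{C}}X$ and the sections of $\pi$; to finish you should observe that the comparison map from $\pi$ to its fibrant replacement is a fiberwise weak equivalence (this is what quasifibration buys you), so the induced map on section spaces is a weak equivalence as well. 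With that sentence added, the sketch is complete.
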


We are finally ready to prove the classification theorem for homogeneous functors.

\begin{proof}[Proof of \refT{multihomogclass}]
Given $E$ homogeneous of degree $\leq\vec{\jmath}$, define

$$F'(\vec{U})=\underset{\mathcal{I}^{(\vec{\jmath})}(\vec{U})}{\holim}\, E.$$

By \refL{quasifiberlemma}, $E:\mathcal{I}^{\vec{\jmath}}(\vec{U})\rightarrow\Spaces$ takes all morphisms to weak equivalences and hence $\hocolim_{\mathcal{I}^{(\vec{\jmath})}(\vec{U})} E$ quasifibers over $\abs{\mathcal{I}^{(\vec{\jmath})}(\vec{U})}$ and the section space of the associated fibration may be identified with $F'$. Note that this is a fibration over $\vec{U}\choose\vec{\jmath}$ by \refL{catconfig}, and a fibration is natural in $\vec{U}$. That is, a morphism $\vec{V}\subset\vec{U}$ induces a commutative (pullback) diagram
$$
\xymatrix{
\underset{\mathcal{I}^{(\vec{\jmath})}(\vec{V})}{\hocolim}\, E\ar[r]\ar[d] & \underset{\mathcal{I}^{(\vec{\jmath})}(\vec{U})}{\hocolim}\, E\ar[d]\\
\abs{\mathcal{I}^{(\vec{\jmath})}(\vec{V})}\ar[r] & \abs{\mathcal{I}^{(\vec{\jmath})}(\vec{U})}\\
}
$$
Since the equivalence $\abs{\mathcal{I}^{(\vec{\jmath})}(\vec{U})}\simeq{\vec{U}\choose\vec{\jmath}}$ from \refL{catconfig} is natural in $\vec{U}$, it follows that $F'$ is equivalent to a functor
$$
F(\vec{U})=\Gamma\left({\vec{U}\choose\vec{\jmath}},Z;p\right).
$$
Here $Z$ is the total space of the associated fibration $p$, and is equivalent to $\hocolim_{\mathcal{I}^{(\vec{\jmath})}(\vec{U})} E$. Replacing $E$ by an equivalent functor if necessary allows us to assume that there is a natural transformation $E\rightarrow F$. Now suppose $\vec{S}\in{\vec{P}\choose\vec{\jmath}}$, and let $\vec{V}$ be a tubular neighborhood of $\vec{S}$. Then the composition
$$
E(\vec{V})\rightarrow F(\vec{V})=\Gamma({\vec{V}\choose\vec{\jmath}},Z;p)\rightarrow p^{-1}(S)
$$
is an equivalence by \refP{homogfibers}. Now consider the commutative square
$$
\xymatrix{
E\ar[r]\ar[d] & F\ar[d]\\
\underset{\vec{k}<\vec{\jmath}}{\holim}\, T_{\vec{k}}E\ar[r] & \underset{\vec{k}<\vec{\jmath}}{\holim}\, T_{\vec{k}}F\\
}
$$
Recall that $\holim_{\vec{k}<\vec{\jmath}}T_{\vec{k}}E(\vec{U})$ is contractible for all $\vec{U}$ since $E$ is homogeneous. The left vertical fiber is then of course $E(\vec{V})$, and by \refP{mcubehomog} we may identify $\holim_{\vec{k}<\vec{\jmath}}T_{\vec{k}}F$ with $\holim_{R\neq\emptyset}T_{\vec{\jmath}_R}F$. Of course, since $F(\vec{V})=\Gamma({\vec{V}\choose\vec{\jmath}})$, \refL{tjrlemma} and \refP{sectionshomog} together tell us that the right vertical fiber is $\Gamma^c({\vec{V}\choose\vec{\jmath}})$. Thus to complete the proof it suffices to show that the map of vertical fibers is an equivalence. 

Since everything in sight is polynomial of degree $\leq\vec{\jmath}$, it further suffices to prove that the map of vertical fibers is an equivalence for all $\vec{V}\in\mathcal{O}_{\vec{\jmath}}$. If $\vec{V}\in\mathcal{O}_{\vec{k}}$ for $\vec{k}<\vec{\jmath}$, then $E(\vec{V})\simeq\ast$ and $\Gamma^c({\vec{V}\choose\vec{\jmath}})\simeq\ast$ since both are homogeneous. If $V$ has exactly $j_i$ connected components in the $i^{th}$ variable, then it is a tubular neighborhood of some $\vec{S}\subset\vec{P}$, and the map of fibers is an equivalence by \refL{tjrlemma} and \refP{homogfibers}.
\end{proof}


\subsection{The fibers of the classifying fibration}\label{S:fibersclasslink}


In this section we work out the fibers of the classifying fibration from \refT{multihomogclass} for a good functor $F$. These are important objects and should be thought of as the derivatives of $F$. That is, we have constructed a Taylor tower for $F$ consisting of polynomial approximations $T_{\vec{\jmath}}F$ just as one has a Taylor series (centered at $\vec{0}$) comprised of Taylor polynomials for an ordinary function $f:\R^m\rightarrow \R$. \refT{multihomogclass} says that there is an equivalence of functors 
$$
L_{\vec{\jmath}}F(\vec{U})\simeq\Gamma^c\left({{\vec{U}\choose \vec{\jmath}},Z;p}\right)
$$
for some fibration $p:Z\rightarrow{\vec{U}\choose\vec{\jmath}}$. The space ${\vec{U}\choose\vec{\jmath}}={U_1\choose j_1}\times\cdots\times{U_m\choose j_m}$ plays the role of $x_1^{j_1}\cdots x_m^{j_m}/j_1!\cdots j_m!$  from the Taylor expansion of an ordinary function $f$. The coefficient of this monomial is of course the partial derivative $\frac{\del^{|\vec{\jmath}|}}{\del^{\vec{\jmath}}\vec{x}}f(\vec{0})$, and its analog is the fiber of $p$ over $\vec{S}\in{\vec{U}\choose\vec{\jmath}}$.  A good symbol for this fiber thus might be $$\frac{\del^{|\vec{\jmath}|}}{\del^{\vec{\jmath}}\vec{U}}F(\vec{\emptyset}).
$$
In \refT{fibersclassfib} below, we give an explicit description of these spaces for every $\vec{\jmath}$. This result follows easily from \cite[Proposition 9.1]{W:EI1}. We will follow the proof of \refT{fibersclassfib} with a remark about what it tells us about $
\frac{\del^{|\vec{\jmath}|}}{\del^{\vec{\jmath}}\vec{U}}\Link(\vec{S};N)
$. We will also compute by hand the fibers of the classifying fibration for $L_{(2,1)}\Link$ to give the reader an idea of what kind of arguments go into this without appealing to results of Weiss.

\begin{thm}\label{T:fibersclassfib}
Let $\vec{U}=(U_1,\ldots, U_m)$, $\vec{\jmath}=(j_1,\ldots, j_m)$, and $\vec{S}=(S_1,\ldots, S_m)\in{\vec{P}\choose\vec{\jmath}}$. Then the fiber over $\vec{S}\in{\vec{P}\choose\vec{\jmath}}$ of the classifying fibration for $L_{\vec{\jmath}}F$ is the total homotopy fiber of the $|\vec{\jmath}|$-cube

$$
T=T_1\coprod \cdots\coprod T_m\longmapsto F(V_{T_1},\ldots, V_{T_m})
$$
where the $T_i$ range through subsets of $S_i$ for each $i$, and $V_{T_i}$ is a tubular neighborhood of $T_i$ in $P_i$ obtained from a tubular neighborhood $V_{S_i}$ of $S_i$ by including only those components containing elements of $T_i$.
\end{thm}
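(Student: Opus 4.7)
The plan is to deduce the theorem as a multivariable adaptation of \cite[Proposition 9.1]{W:EI1}, using the results of \refS{multihomog}.

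First I would apply \refP{homogfibers} to $E = L_{\vec{\jmath}}F$, which is homogeneous of degree $\vec{\jmath}$ by \refP{multilayerhomog}, to identify the fiber of the classifying fibration $p$ over $\vec{S}$ with $L_{\vec{\jmath}}F(\vec{V})$, where $\vec{V}$ is a sufficiently small tubular neighborhood of $\vec{S}$. By shrinking, I may assume each $V_i$ consists of exactly $j_i$ disjoint open balls, one around each element of $S_i$, so that the notation $V_{T_i}$ of the theorem makes sense for every $T_i \subseteq S_i$.

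Next, combining \refD{MultiLayer} with \refP{mcubehomog}, I would rewrite $L_{\vec{\jmath}}F(\vec{V})$ as the total homotopy fiber of the $m$-cube $R \mapsto T_{\vec{\jmath}_R}F(\vec{V})$ indexed by $R \in \mathcal{P}(\underline{m})$. The vertex $R = \emptyset$ evaluates to $F(\vec{V})$ since $\vec{V}$ is terminal in $\mathcal{O}_{\vec{\jmath}}(\vec{V})$. For $R \ne \emptyset$, the key step is a cofinality argument, applied one variable at a time using \refP{holimproduct}, showing that the full subposet of $\mathcal{O}_{\vec{\jmath}_R}(\vec{V})$ spanned by the sets $\vec{V}_{\vec{T}} = (V_{T_1}, \ldots, V_{T_m})$ with $T_i \subsetneq S_i$ for $i \in R$ and $T_i \subseteq S_i$ for $i \notin R$ is left cofinal. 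This mimics the argument in the proof of \cite[Proposition 8.4]{W:EI1}, exploiting that any open set with few enough components inside a finite disjoint union of balls is isotopy equivalent to a union of components. Granted this, $T_{\vec{\jmath}_R}F(\vec{V})$ becomes the homotopy limit of $F(\vec{V}_{\vec{T}})$ over the indicated subposet.

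Finally I would assemble the pieces. Viewing the $|\vec{\jmath}|$-cube $T \mapsto F(\vec{V}_{\vec{T}})$ as an iterated cube, grouped according to which $S_i$ each direction belongs to, iterated application of \refP{IteratedHofiber} presents its total homotopy fiber as the total fiber of the $m$-cube whose $R$-th vertex is the partial total fiber along the directions in $\bigsqcup_{i \in R} S_i$. The identification in the previous paragraph shows that this partial total fiber coincides with the homotopy fiber of the canonical map $F(\vec{V}) \to T_{\vec{\jmath}_R}F(\vec{V})$, so that the $m$-cube of partial total fibers matches, up to a standard reindexing, the $m$-cube of step two. Taking total fibers on both sides therefore recovers $L_{\vec{\jmath}}F(\vec{V})$, completing the identification. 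The hardest step is the cofinality argument in the third paragraph; the rest is careful bookkeeping with iterated homotopy limits.
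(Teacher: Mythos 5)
Your route is genuinely different from the paper's. The paper regards $F$ as a functor of the single variable $U=U_1\coprod\cdots\coprod U_m$, uses \refP{homogdisjoint} together with the proof of \refT{twotowers} to identify the factor of $L_{|\vec{\jmath}|}F$ indexed by $\vec{\jmath}$ with $L_{\vec{\jmath}}F$, then quotes Weiss's description of the fibers of the single variable classifying fibration (\cite[Proposition 9.1]{W:EI1}) and merely rewrites the resulting cube in multivariable notation. You instead argue intrinsically in the multivariable setting, which is a legitimate alternative and avoids the single variable classification entirely. Your first two steps are sound: the identification of the fiber over $\vec{S}$ with $L_{\vec{\jmath}}F(\vec{V})$ follows from \refT{multihomogclass} applied to $L_{\vec{\jmath}}F$ (homogeneous by \refP{multilayerhomog}) together with \refP{homogfibers}, and your cofinality claim is correct: every $\vec{W}\in\mathcal{O}_{\vec{\jmath}_R}(\vec{V})$ is contained in a unique minimal $\vec{V}_{\vec{T}}$, with $T_i\subsetneq S_i$ for $i\in R$ by pigeonhole, so the relevant comma posets have initial objects; alternatively one can bypass cofinality by applying the polynomial property of $T_{\vec{\jmath}_R}F$ with the components of the $V_i$ (which are clopen in $V_i$) as the closed sets. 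Either way $T_{\vec{\jmath}_R}F(\vec{V})\simeq\holim F(\vec{V}_{\vec{T}})$ over your subposet.

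The gap is in the final assembly. There is no $m$-cube ``whose $R$-th vertex is the partial total fiber along the directions in $\coprod_{i\in R}S_i$'' in the sense you need: the natural maps between these partial total fibers run in the wrong direction to form such a cube, and \refP{IteratedHofiber} does not produce one. Moreover, even granting the cube $R\mapsto\hofiber(F(\vec{V})\to T_{\vec{\jmath}_R}F(\vec{V}))$, it does not ``match up to reindexing'' the cube $R\mapsto T_{\vec{\jmath}_R}F(\vec{V})$, and its total fiber is not $L_{\vec{\jmath}}F(\vec{V})$: since its initial vertex $\hofiber(F(\vec{V})\to T_{\vec{\jmath}}F(\vec{V}))$ is contractible, its total fiber is $\Omega L_{\vec{\jmath}}F(\vec{V})$, already for $m=1$. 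The correct bookkeeping is simpler: the total fiber of the $|\vec{\jmath}|$-cube is $\hofiber\bigl(F(\vec{V})\to\holim_{\vec{T}\neq\vec{S}}F(\vec{V}_{\vec{T}})\bigr)$, where $\vec{T}$ runs over tuples with $T_i\subseteq S_i$ and $\vec{T}\neq\vec{S}$. Cover this indexing poset by the ideals $\mathcal{Q}_i=\{\vec{T}\colon T_i\subsetneq S_i\}$ and apply \refL{posetideals}: since $\holim_{\mathcal{Q}_R}F(\vec{V}_{\vec{T}})\simeq T_{\vec{\jmath}_R}F(\vec{V})$ is exactly your step two, this gives $\holim_{\vec{T}\neq\vec{S}}F(\vec{V}_{\vec{T}})\simeq\holim_{R\neq\emptyset}T_{\vec{\jmath}_R}F(\vec{V})$, and then \refD{MultiLayer} together with \refP{mcubehomog} identifies the total fiber with $L_{\vec{\jmath}}F(\vec{V})$. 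With this replacement of your third step the argument is complete.
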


\begin{proof}
Let $k=|\vec{\jmath}|$. By \refP{homogdisjoint}, 
$$
L_kF\simeq\Gamma^c\left({U\choose k},Z;p\right)=\prod_{|\vec{\jmath}|=k}\Gamma^c\left({\vec{U}\choose \vec{\jmath}},Z;p\right),
$$
where $p:Z\rightarrow{U\choose k}$ is the classifying fibration (here we regard $F$ as a functor of the single variable $U=U_1\coprod \cdots\coprod U_m$). It follows from the proof of \refT{twotowers} that the right-hand side is equivalent to the product over $|\vec{\jmath}|=k$ of $L_{\vec{\jmath}}F$.

By \cite[Proposition 9.1]{W:EI1}, the fiber over $S'\in{U_1\coprod\cdots\coprod U_m\choose k}$ is
$$
\tfiber(T\mapsto F(V_T)),
$$
where $T$ ranges through subsets of $S$, and $V_T$ is a tubular neighborhood of $T$ obtained from a tubular neighborhood of $V_S$ by only including those components of $V_S$ containing elements of $T$. Of course, as we noted in the proof of \refP{homogdisjoint},
$$
{U_1\coprod\cdots\coprod U_m\choose k}=\coprod_{|\vec{\imath}|=k}{\vec{U}\choose\vec{\imath}},
$$
and if $S\in{\vec{U}\choose\vec{\jmath}}$, we may write $S=S_1\coprod \cdots\coprod S_m$, $\vec{S}=(S_1,\ldots, S_m)$ for $S_i\subset U_i$, and the tubular neighborhood $V_S=V_{S_1}\coprod\cdots\coprod V_{S_m}$, and put $\vec{V}_{\vec{S}}=(V_{S_1},\ldots, V_{S_m})$. Thus the fiber over $\vec{S}\in{\vec{U}\choose\vec{\jmath}}$ is 
$$
\tfiber(\vec{T}\mapsto F(\vec{V}_{\vec{T}})),
$$
where $\vec{T}=(T_1,\ldots, T_m)$ ranges through subsets of $\vec{S}$. Rewriting this using the above shows that it is the total homotopy fiber of the cube
$$
T=T_1\coprod \cdots\coprod T_m\longmapsto F(V_{T_1},\ldots, V_{T_m}).
$$
\end{proof}

\begin{rem}
In case $F=\Link$, the tubular neighborhood  can be disregarded altogether, since $\Link(\vec{V}_{\vec{T}})\simeq\Link(\vec{T})$. Thus, the fibers of the classifying fibration for $L_{\vec{\jmath}}\Link$ are the total homotopy fibers of the $|\vec{S}|$-cube
$$
\vec{T}=(T_1,\ldots, T_m)\longmapsto\Link(T_1,\ldots, T_m;N).
$$
The spaces $\Link(T_1,\ldots, T_m;N)$ are ``partial configuration spaces'' in the sense that we may regard $\Link(T_1,\ldots, T_m;N)$ as a subset of $N^{|\vec{S}|}-D$, where $D$ is a union of some of the diagonals of $N^{|\vec{S}|}$. It is not hard to work out which diagonals in general; we will give an example in what follows.
\end{rem}

\begin{example}
Here we compute ``by hand'' the fibers of the classifying fibration for 
$$
(U_1,U_2)\longmapsto L_{(2,1)}\Link(U_1,U_2;N),
$$ 
where $(U_1,U_2)\in\mathcal{O}(P_1)\times\mathcal{O}(P_2)$. As in the proof of \refT{multihomogclass}, $L_{(2,1)}\Link(P_1,P_2;N)$ is equivalent to the space of sections of a fibration $p:Z\rightarrow{P_1\choose 2}\times P_2$ whose fiber over $(S_1,S_2)\in{P_1\choose 2}\times P_2$ is $L_{(2,1)}\Link(V_1,V_2;N)$.  Here $(V_1,V_2)$ is a tubular neighborhood of $(S_1,S_2)$. We will write $S_1=\{a_1,a_2\}$,  $S_2=\{b\}$, and $V_1=V_{a_1}\coprod V_{a_2}$.

Also writing $\Link(\vec{V})$ in place of $\Link(V_1,V_2;N)$, we have by \refP{mcubehomog} an equivalence

\begin{equation}\label{L21exeqn}
L_{(2,1)}\Link(\vec{V})\simeq\tfiber\left(
\raisebox{1cm}{
\xymatrix{
  T_{(2,1)}\Link(\vec{V}) \ar[r]\ar[d]  & T_{(1,1)}\Link(\vec{V})  \ar[d]\\
  T_{(2,0)}\Link(\vec{V}) \ar[r]  & T_{(1,0)}\Link(\vec{V})
}}
\right)
\end{equation}

We also have the following equivalences which follow from \refD{jMultiStage} and the remark immediately following. 
\begin{eqnarray*}
T_{(2,1)}\Link(V_1,V_2)&\simeq&\Link(V_1,V_2),\\
T_{(2,0)}\Link(V_1,V_2)&\simeq&\Link(V_1,\emptyset),\\
T_{(1,1)}\Link(V_1,V_2)&\simeq&\holim(\Link(V_{a_1},V_2)\rightarrow\Link(\emptyset,V_2)\leftarrow\Link(V_{a_2},V_2)),\mbox{ and}\\
T_{(1,0)}\Link(V_1,V_2)&\simeq&\holim(\Link(V_{a_1},\emptyset)\rightarrow\Link(\emptyset,\emptyset)\leftarrow\Link(V_{a_2},\emptyset)).
\end{eqnarray*}
 Observe also that $\Link(V_1,V_2)\simeq\Link(a_1\cup a_2,b)$, and likewise for the other spaces. This fact is special to this functor. It is not true, for example, for spaces of embeddings; in that case we cannot ignore tangential information. In general, then, we cannot simplify expressions like $F(V_1,V_2)$ as we have done with $\Link(V_1,V_2)$ above. Other than this, everything else will carry through without further ado.
 
Now consider the following diagram

$$
\xymatrix{
&&\Link(a_1\cup a_2,\emptyset)\ar[d]\ar[dl]\ar[dr]&&\\
&\Link(a_1,\emptyset)\ar[r]&\Link(\emptyset,\emptyset)&\Link(a_2,\emptyset)\ar[l]&\\
\Link(a_1,b)\ar[rr]\ar[ru]&&\Link(\emptyset,b)\ar[u]&&\Link(a_2,b)\ar[ll]\ar[lu]
}
$$

From the observations above, taking homotopy limits along rows yields

$$
\xymatrix{
T_{(2,0)}\Link(a_1\cup a_2,b)\ar[d]\\
T_{(1,0)}\Link(a_1\cup a_2,b)\\
T_{(1,1)}\Link(a_1\cup a_2,b)\ar[u]
}
$$

The homotopy limit of this diagram is the target of the map from equation \eqref{L21exeqn}. Hence by inspection, $L_{(2,1)}(V_1,V_2)$ is equivalent to the total homotopy fiber of the cube
$$
\xymatrix@=20pt{
   \Link(a_1\cup a_2,b)\ar[rr]\ar[dd]\ar[dr]      &           &   \Link(a_1\cup a_2,\emptyset) \ar'[d][dd]           \ar[dr]  &                  \\
        &  \Link(a_2,b) \ar[rr] \ar[dd]  &             & \Link(a_2,\emptyset)
         \ar[dd] \\
\Link(a_1,b) \ar'[r][rr] \ar[dr] &        &   \Link(a_1,\emptyset)
\ar[dr] &                   \\
        &   \Link(\emptyset,b) \ar[rr]      &                    &
\Link(\emptyset,\emptyset)
}
$$

Note, for example, that there is an equivalence given by the evaluation map 
$$
\Link(a_1\cup a_2,b;N)\longrightarrow N^3-(\Delta_{13}\cup\Delta_{23})
$$
which sends $f$ to $(f(a_1),f(a_2),f(b))$. Here $\Delta_{ij}$ is the subset of $(x_1,x_2,x_3)\in N^3$ for which $x_i=x_j$. This is what we referred to above as a partial configuration space, for obvious reasons. The cube above happens to be homotopy cartesian. It is not clear how cartesian more general cubes of link maps are. In \cite{MV:Links} we will explore the cohomology of the total fibers of diagrams such as those above and relate this to classical link invariants.
\end{example}

As mentioned at the beginning of this section, the fibers of the classifying fibration should be thought of as the derivatives of the functor in question, and so, if only by analogy, it should be useful to understand them better. For link maps, a basic question to which we do not know the answer is how highly connected these spaces are. It is not difficult to work out some special cases, but in general this appears to be a difficult problem.

\section{Relationship between single variable and multivariable polynomial functors and Taylor towers}\label{S:TowersRelation}

We are now ready to exhibit a relationship between stages of the single variable and multivariable Taylor towers. Recall the setup: $F$ is a good functor from $\mathcal{O}(\coprod_iP_i)$ to topological spaces, which we may also regard as a functor from $\mathcal{O}(\vec{P})$ to topological spaces, where $\vec{P}=(P_1,\ldots, P_m)$. Our proof requires the classification  theorem for homogeneous functors but it would be interesting to know if there is one that does not. In particular, this proof requires a choice of a basepoint in $F(\vec{P})$, which may or may not be available depending on $F$ and $\vec{P}$.


\begin{thm}\label{T:twotowers}
There is an equivalence
$$
T_kF\stackrel{\simeq}{\longrightarrow}\underset{|\vec{\jmath}|\leq k}{\holim}\, T_{\vec{\jmath}}F.
$$
\end{thm}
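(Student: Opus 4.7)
The plan is to prove the theorem by induction on $k$, reducing at each stage to a comparison of homotopy fibers and then invoking the classification theorems for homogeneous functors in both the single and multivariable settings.

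First I would construct the natural comparison map. For each $\vec{\jmath}$ with $|\vec{\jmath}|\leq k$, the inclusion $\mathcal{O}_{\vec{\jmath}}(\vec{U})\subseteq\mathcal{O}_k(\vec{U})$ (which holds because $\vec{V}\in\mathcal{O}_{\vec{\jmath}}(\vec{U})$ has each $V_i$ diffeomorphic to at most $j_i$ balls, so the total component count is at most $|\vec{\jmath}|\leq k$) induces by restriction a map $T_kF\to T_{\vec{\jmath}}F$. These are compatible over $\vec{\jmath}\in\mathcal{Z}^m_{\leq k}$ and assemble into the natural map
$$
\Phi_k\colon T_kF\longrightarrow\holim_{|\vec{\jmath}|\leq k}T_{\vec{\jmath}}F.
$$
The base case $k=0$ is immediate since $\mathcal{Z}^m_{\leq 0}=\{\vec{0}\}$ and both sides reduce to $F(\emptyset)$. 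For the inductive step, assume $\Phi_{k-1}$ is an equivalence. In the commutative square
$$
\xymatrix{
T_kF \ar[r]^-{\Phi_k}\ar[d] & \holim_{|\vec{\jmath}|\leq k}T_{\vec{\jmath}}F \ar[d] \\
T_{k-1}F \ar[r]^-{\Phi_{k-1}}_-{\simeq} & \holim_{|\vec{\jmath}|\leq k-1}T_{\vec{\jmath}}F
}
$$
the bottom arrow is an equivalence, so it suffices to prove that the induced map on vertical homotopy fibers is an equivalence. The left-hand fiber is $L_kF$; call the right-hand fiber $M_k$.

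The first main step is to identify $M_k\simeq\prod_{|\vec{\jmath}|=k}L_{\vec{\jmath}}F$. Here $\{\vec{\jmath}:|\vec{\jmath}|=k\}$ is exactly the set of maximal elements of $\mathcal{Z}^m_{\leq k}$, and it forms an antichain; moreover any $\vec{\jmath}'<\vec{\jmath}$ with $|\vec{\jmath}|=k$ automatically satisfies $|\vec{\jmath}'|\leq k-1$ and hence already lies in $\mathcal{Z}^m_{\leq k-1}$. This lets one compute the fiber of the restriction of homotopy limits as a product over the newly added maximal elements, each factor being the homotopy fiber of $T_{\vec{\jmath}}F\to\holim_{\vec{\jmath}'<\vec{\jmath}}T_{\vec{\jmath}'}F$, which is $L_{\vec{\jmath}}F$ by \refD{MultiLayer}.

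The second main step is to identify $L_kF\simeq\prod_{|\vec{\jmath}|=k}L_{\vec{\jmath}}F$. The single-variable classification \refT{homogclass} together with \refP{homogdisjoint} expresses $L_kF$ as a product of compactly supported section spaces over the components ${\vec{U}\choose\vec{\jmath}}$ of ${U\choose k}$. By \refP{multilayerhomog}, each $L_{\vec{\jmath}}F$ is multivariable homogeneous of degree $\vec{\jmath}$, so \refT{multihomogclass} identifies it with the corresponding section space; the two classifying fibrations agree because by \refT{fibersclassfib} and its single-variable precursor the fibers over each $\vec{S}\in{\vec{U}\choose\vec{\jmath}}$ are the same total homotopy fibers of a cube of $F$-values on tubular neighborhoods of $\vec{S}$.

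The main obstacle will be the compatibility check: the natural map $L_kF\to M_k$ induced by $\Phi_k$ must correspond, under the two independent classification equivalences constructed above, to the identity of $\prod_{|\vec{\jmath}|=k}L_{\vec{\jmath}}F$. Concretely, for each maximal $\vec{\jmath}$ the projection $L_kF\to L_{\vec{\jmath}}F$ coming from the restriction $T_kF\to T_{\vec{\jmath}}F$ must coincide with the projection to the section space over ${\vec{U}\choose\vec{\jmath}}\subset{U\choose k}$. Once this naturality is established, the five-lemma for homotopy fibration sequences closes the induction.
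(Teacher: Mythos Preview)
Your overall architecture matches the paper's: induct on $k$, pass to vertical homotopy fibers over the map to the $(k-1)$-stage, and identify both fibers with $\prod_{|\vec{\jmath}|=k}L_{\vec{\jmath}}F$. The paper carries out your ``first main step'' by applying \refL{posetideals} twice---once to the cover $\mathcal{Z}^m_{\leq k}=\cup_{|\vec{\jmath}|=k}\mathcal{Z}^m_{\leq\vec{\jmath}}$ and once to $\mathcal{Z}^m_{\leq k-1}=\cup_{|\vec{\jmath}|=k}\mathcal{Z}^m_{<\vec{\jmath}}$---rather than your antichain argument; both reach the same conclusion, but the poset-ideal lemma makes the product decomposition of the fiber completely explicit and you may find it cleaner to cite it.

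There is, however, one genuine problem in your second main step: you invoke \refT{fibersclassfib} to match the fibers of the single-variable and multivariable classifying fibrations, but the proof of \refT{fibersclassfib} explicitly uses \refT{twotowers} (see the sentence ``It follows from the proof of \refT{twotowers} that the right-hand side is equivalent to the product over $|\vec{\jmath}|=k$ of $L_{\vec{\jmath}}F$''). So as written your argument is circular. The fix is to compute the fibers directly: for the single-variable side use Weiss's result (Proposition~9.1 of \cite{W:EI1}), and for the multivariable side use \refP{homogfibers} together with \refP{mcubehomog} to identify $L_{\vec{\jmath}}F(\vec{V})$, for $\vec{V}$ a tubular neighborhood of $\vec{S}$, as the total fiber of the same $|\vec{\jmath}|$-cube. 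The paper itself sidesteps this by appealing to \refP{homogdisjoint} and the classification theorem more loosely, but it does not cite \refT{fibersclassfib}. Your acknowledgment of the compatibility check between the two product decompositions is well-placed; that really is where the remaining work lies.
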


\begin{proof}
We induct on $k$. The cases $k=0$ and $k=1$ are easy to see on the categorical level. For $k=0$, we have $\mathcal{O}_0(U_1\coprod \cdots\coprod U_m)=\prod_i\mathcal{O}_0(U_i)$, which consist of $\emptyset$ and $\vec{\emptyset}$ respectively. For $k=1$, we have an equivalence $\mathcal{O}_1(U_1\coprod \cdots\coprod U_m)=\coprod_i\mathcal{O}_1(U_i)$, and since homotopy limits turn coproducts into products, we have the desired result by inspection.

Now assume the result is true for all $l<k$. Consider the following diagram whose columns are homotopy fiber sequences.
$$
\xymatrix{
L_kF\ar[r]\ar[d] & L\ar[d]\\
T_kF\ar[r]\ar[d] & \underset{|\vec{\jmath}|\leq k}{\holim}\, T_{\vec{\jmath}}F\ar[d]\\
T_{k-1}F\ar[r] & \underset{|\vec{\jmath}|\leq k-1}{\holim}\, T_{\vec{\jmath}}F\\
}
$$
Here $L=\hofiber(\holim_{|\vec{\jmath}|\leq k}T_{\vec{\jmath}}F\rightarrow\holim_{|\vec{\jmath} |\leq k-1}T_{\vec{\jmath}}F)$. Recall from \refP{homogdisjoint} that $L_kF$ is the product, over $|\vec{\jmath}|=k$, of $L_{\vec{\jmath}}$. It is enough to show that $L_kF\simeq L$.

Recall that $\mathcal{Z}^m_{\leq k}$ is the poset of tuples $\vec{\jmath}=(j_1,\ldots, j_m)$ such that $|\vec{\jmath}|=j_1+\cdots + j_m\leq k$. We have a covering $\mathcal{Z}^m_{\leq k}=\cup_{|\vec{\jmath}|= k}\mathcal{Z}^m_{\leq\vec{\jmath}}$, and the $\mathcal{Z}^m_{\leq\vec{\jmath}}$ are ideals. For a subset $S\subset \{\vec{\jmath}\ |\ |\vec{\jmath}|=k\}$, let $\mathcal{Z}^m_{\leq\vec{\jmath}_S}=\cap_{\vec{\jmath}\in S}\mathcal{Z}^m_{\vec{\jmath}}$. Here $\vec{\jmath}_S=(a_1,\ldots, a_m)$, where $a_i=\min\{j_i\  | \ \vec{\jmath}=(j_1,\ldots, j_m)\in S\}$. By \refL{posetideals} we have an equivalence

$$\underset{|\vec{\jmath}|\leq k}{\holim}\, T_{\vec{\jmath}}F\simeq\underset{S\neq\emptyset}{\holim}\,\underset{\vec{\jmath}\leq\vec{\jmath}_S}{\holim}\, T_{\vec{\jmath}}F\simeq\underset{S\neq\emptyset}{\holim}\, T_{\vec{\jmath}_S}F.$$

The second is an equivalence follows because $\vec{\jmath}_S$ is a final object in $\mathcal{Z}^m_{\leq\vec{\jmath}_S}$. 

We also have a similar but different decomposition  $\mathcal{Z}^m_{\leq k-1}=\cup_{|\vec{\jmath}|=k}\mathcal{Z}^m_{<\vec{\jmath}}$. This is once again a covering by ideals, but note that if $S\subset \{\vec{\jmath}\  |\  |\vec{\jmath}|=k\}$, we have

\begin{equation*}
\cap_{\vec{\jmath}\in S}\mathcal{Z}^m_{<\vec{\jmath}}=
\begin{cases}
\mathcal{Z}^m_{<\vec{\jmath}}, &\text{if $S=\{\vec{\jmath}\}$};\\
\mathcal{Z}^m_{\leq\vec{\jmath}_S}, &\text{if $|S|>1$.}
\end{cases}
\end{equation*}

The categories $\mathcal{Z}^m_{\leq\vec{\jmath}_S}$ each have a final object, namely $\vec{\jmath}_S$, and so $\holim_{\vec{k}\in\mathcal{Z}^m_{\leq\vec{\jmath}_S}}T_{\vec{k}}F\simeq T_{\vec{\jmath}_S}$. By \refL{posetideals} we have an equivalence

$$\underset{|\vec{\jmath}|\leq k-1}{\holim}\, T_{\vec{\jmath}}F\simeq\underset{S\neq\emptyset}{\holim}\,\underset{\substack{ \vec{\jmath}<\vec{\imath} \mbox{ if }S=\{\vec{\imath}\} \\ \vec{\jmath}\leq\vec{\jmath}_S\mbox{ if }|S|>1 }}{\holim}\, T_{\vec{\jmath}}F.$$

By inspection, it follows that

$$L=\hofiber\left(\underset{|\vec{\jmath}|\leq k}{\holim}\, T_{\vec{\jmath}}F\rightarrow\underset{|\vec{\jmath}|\leq k-1}{\holim}\, T_{\vec{\jmath}}F\right)=\underset{|\vec{\jmath}|=k}{\prod}\,L_{\vec{\jmath}}F,$$

and, in light of \refP{homogdisjoint}, it is clear that $L_kF\rightarrow L$ is an equivalence.

\end{proof}

\section{Non-functorial finite models for stages of the Taylor tower}\label{S:FiniteModels}


The stages $T_kF(U)$ and $T_{\vec{\jmath}}F(\vec{U})$ of single variable and multivariable Taylor towers given in Sections \ref{S:Polynomials} and \ref{S:MultiPolynomials} have the advantage of being functors of $U$ and $\vec{U}$ respectively, but the homotopy limits involved can be unwieldy because the categories $\mathcal{O}_k(U)$ and $\mathcal{O}_{\vec{\jmath}}(\vec{U})$ are rather large. At the expense of losing functoriality, we can sometimes describe the stages in terms of a homotopy limit of values of $F$ itself over a finite category. This is the case for the examples we will be most concerned with in \cite{MV:Links}, namely links and homotopy links.  We will state the definitions in terms of $T_{\vec{\jmath}}F(\vec{U})$, but everything specializes in a straightforward way to $T_kF(U)$.

Suppose that $P=\coprod_{i=1}^m I$ is a disjoint union of neat smooth submanifolds of a manifold $N$ with boundary $\del N$, each diffeomorphic with an interval. Let $\mathcal{O}(P_i)$  denote the poset of open subsets of $P_i$ which contain $\partial P_i$.


\begin{prop}\label{P:FiniteTower}
Let $F=\Emb_\partial(-,N)$ or $\Link_\partial(-,N)$, and suppose $P=\coprod P_i$ is as above; that is, each $P_i=I$. Let $\vec{\jmath}=(j_1,\ldots, j_m)\geq\vec{-1}$ be a tuple of nonnegative integers, and $A^i_0,\ldots A_{j_i}^i$ be pairwise disjoint nonempty closed connected subintervals of the interior of $P_i$ for every $i$. Let $P_{i,S_i}=P_i-\cup_{k\in S_i} A^i_{k}$, where $S_i$ ranges through subsets of $\{0,1,\ldots, j_i\}$, and put $\vec{P}_{\vec{S}}=(P_{S_1},\ldots, P_{S_m})$. Then
$$
T_{\vec{\jmath}}F(\vec{P})\simeq \underset{\vec{S}\in\mathcal{P}_0([\vec{\jmath}])}{\holim}\, \, F(\vec{P}_{\vec{S}}).
$$
\end{prop}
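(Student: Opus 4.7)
The plan is to factor the claim through the polynomial approximation $T_{\vec{\jmath}}F$ in two steps: (i) use the polynomial property of $T_{\vec{\jmath}}F$ to reduce its value at $\vec{P}$ to a homotopy limit over the punctured cube $\mathcal{P}_0([\vec{\jmath}])$, and (ii) use the agreement of $F$ with $T_{\vec{\jmath}}F$ on $\mathcal{O}_{\vec{\jmath}}(\vec{P})$ to replace $T_{\vec{\jmath}}F(\vec{P}_{\vec{S}})$ by $F(\vec{P}_{\vec{S}})$ inside that homotopy limit.

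For step (i), I would invoke \refT{MultiTkpoly}(1), which asserts that $T_{\vec{\jmath}}F$ is polynomial of degree $\leq \vec{\jmath}$. Applying \refD{Multipoly} with the given pairwise disjoint closed subintervals $A^i_0, \ldots, A^i_{j_i}$ in each $P_i$ then yields the equivalence
$$T_{\vec{\jmath}}F(\vec{P}) \stackrel{\simeq}{\longrightarrow} \underset{\vec{S} \in \mathcal{P}_0([\vec{\jmath}])}{\holim}\, T_{\vec{\jmath}}F(\vec{P}_{\vec{S}}).$$

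For step (ii), I would verify that every $\vec{P}_{\vec{S}}$ lies in $\mathcal{O}_{\vec{\jmath}}(\vec{P})$ (understood in the boundary-inclusive sense of \refS{ManifoldsBoundary}), so that $\vec{P}_{\vec{S}}$ is a final object in $\mathcal{O}_{\vec{\jmath}}(\vec{P}_{\vec{S}})$ and the natural map $F(\vec{P}_{\vec{S}}) \to T_{\vec{\jmath}}F(\vec{P}_{\vec{S}})$ is an equivalence by the remark following \refD{jMultiStage}. The hypothesis that $P_i = I$ is essential here: removing $|S_i|$ pairwise disjoint closed connected subintervals from $P_i$ produces exactly $|S_i| + 1$ components, of which the two extreme ones are isotopy equivalences of small thickenings of the endpoints of $\partial P_i$ and therefore do not count toward the complexity, leaving $|S_i| - 1$ free open subintervals in the interior. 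Since $\vec{S} \in \mathcal{P}_0([\vec{\jmath}])$ forces $1 \leq |S_i| \leq j_i + 1$, the number of free components in each coordinate satisfies $0 \leq |S_i| - 1 \leq j_i$, placing $\vec{P}_{\vec{S}}$ in $\mathcal{O}_{\vec{\jmath}}(\vec{P})$. Composing the equivalences from the two steps then produces the asserted finite model.

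The principal subtlety lies precisely in this bookkeeping in step (ii): the boundary-touching pieces of $P_{i, S_i}$ must be treated as trivial thickenings of $\partial P_i$ rather than as fresh open balls, which is what permits $|S_i|$ to range all the way up to $j_i + 1$ without driving $\vec{P}_{\vec{S}}$ out of $\mathcal{O}_{\vec{\jmath}}(\vec{P})$. This is why the hypothesis singles out $F = \Emb_\partial$ or $\Link_\partial$, both of which are good functors in the bounded setting of \refS{ManifoldsBoundary} and hence insensitive, via isotopy equivalence, to how one thickens $\partial P_i$; it is also why the one-dimensionality of each $P_i = I$ is essential for replacing the unwieldy category $\mathcal{O}_{\vec{\jmath}}(\vec{P})$ by the finite poset $\mathcal{P}_0([\vec{\jmath}])$.
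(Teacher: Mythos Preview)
Your proof is correct and follows essentially the same two-step approach as the paper: first use that $T_{\vec{\jmath}}F$ is polynomial of degree $\leq\vec{\jmath}$ to pass to the punctured-cube homotopy limit, then use that each $\vec{P}_{\vec{S}}$ is final in $\mathcal{O}_{\vec{\jmath}}(\vec{P}_{\vec{S}})$ to replace $T_{\vec{\jmath}}F$ by $F$. Your careful accounting of the boundary-touching components (yielding $|S_i|-1\leq j_i$ free interior intervals) spells out precisely what the paper abbreviates as ``ignoring the boundary, where everything is already fixed.''
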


\begin{proof}
Since $T_{\vec{\jmath}}F$ is a polynomial of degree $\leq\vec{\jmath}$, it follows that 
$$
T_{\vec{\jmath}}F(\vec{P})\simeq \underset{\vec{S}\in\mathcal{P}_0([\vec{\jmath}])}{\holim}\, \, T_{\vec{\jmath}}F(\vec{P}_{\vec{S}}).
$$
Since $P_{i,S_i}$ is diffeomorphic to at most $j_i$ open connected intervals (ignoring the boundary, where everything is already fixed) when $\vec{S}\neq\vec{\emptyset}$, we have an equivalence 
$$
F(\vec{P}_{\vec{S}})\simeq T_{\vec{\jmath}}F(\vec{P}_{\vec{S}})
$$ since $\vec{P}_{\vec{S}}$ is a final object in $\mathcal{O}_{\vec{\jmath}}(\vec{P}_{\vec{S}})$. This completes the proof.
\end{proof}

\begin{rem}
The case when $F=\Emb$ and  $m=1$ has been extensively studied.  More details can be found in \cite{S:TSK}.  In that paper, Sinha uses pairwise disjoint nonempty \emph{open} connected subintervals of $I$ for technical reasons (he requires compactness of the complement). The trouble with using open sets to delete is that their complements are closed, and these would obviously not be elements in our category of open sets. However, this is not a problem in this case for two reasons:
\begin{enumerate}
\item the model for $T_{\vec{\jmath}}F$ given in Proposition \ref{P:FiniteTower} is non-functorial, and 
\item the homotopy type of $F(\vec{P}_{\vec{S}})$ is independent of whether the intervals are open or closed. 
\end{enumerate}
We will need to use open sets for the embedding functor to generalize various results of Sinha's  to $F=\Emb, \Link$ and $m>1$ in \cite{MV:Links}.\end{rem}



%




%

\bibliographystyle{amsplain}

\bibliography{/Users/ivolic/Desktop/Papers/Bibliography}

\end{document}